\def\BarthPeters{\(\mathrm{BP}\)}
\tikzstyle{nodal}=[circle,draw,fill=black,inner sep=0pt, minimum width=4pt]
\tikzstyle{half-fiber}=[rectangle,draw=black,thick,inner sep=0pt, minimum width=5pt, minimum height=5pt]
\tikzset{double distance = 2pt}
\author{Gebhard Martin}
\address{Mathematisches Institut \\ Universität Bonn \\ Endenicher Allee 60 \\ 53115 Bonn \\ Germany}
\email{gmartin@math.uni-bonn.de} 
\author{Giacomo Mezzedimi}
\address{Institut für Algebraische Geometrie \\ Leibniz Universität Hannover \\ Welfengarten 1 \\ 30167 Hannover \\ Germany}
\email{mezzedimi@math.uni-hannover.de}
\author{Davide Cesare Veniani}
\address{Institut für Diskrete Strukturen und Symbolisches Rechnen \\ Universität Stuttgart \\ Pfaffenwaldring 57 \\ 70569 Stuttgart \\ Germany}
\email{davide.veniani@mathematik.uni-stuttgart.de}
\title{Enriques surfaces of non-degeneracy 3}
\date{\today}
\subjclass[2020]{14J28 (14C20 14C21 14N25)}
\keywords{Enriques surface, non-degeneracy, genus one fibration, half-fiber, Enriques sextic, Castelnuovo quintic}
\begin{document}

\begin{abstract}
We classify all non-extendable 3-sequences of half-fibers on Enriques surfaces. If the characteristic is different from 2, we prove in particular that every Enriques surface admits a 4-sequence, which implies that every Enriques surface is the minimal desingularization of an Enriques sextic, and that every Enriques surface is birational to a Castelnuovo quintic.
\end{abstract}

\maketitle

\tableofcontents

\section{Introduction}

In this paper, we continue our investigation of~\(c\)-sequences on Enriques surfaces \cite{extra-special}. Let \(X\) be an Enriques surface defined over an algebraically closed field of arbitrary characteristic \(p\).

Recall that a (non-degenerate) \emph{\(c\)-sequence} on~\(X\) is a \(c\)-tuple \((F_1,\ldots,F_c)\) of half-fibers such that \(F_i.F_j = 1 - \delta_{ij}\). We say that a \(c\)-sequence is \emph{extendable} if there is a \(c'\)-sequence with \(c' > c\) such that the former is contained in the latter, disregarding the order.
We define the \emph{non-degeneracy} \(\nd(F_1,\ldots,F_c)\) of a \(c\)-sequence \((F_1,\ldots,F_c)\) to be the maximal \(c'\) such that \((F_1,\ldots,F_c)\) extends to a \(c'\)-sequence. We define the \emph{maximal} and \emph{minimal non-degeneracy} of the surface \(X\) to be
\begin{align*}
\max \nd(X) & \coloneqq \max \nd(F_1,\ldots,F_c), \\
\min \nd(X) & \coloneqq \min \nd(F_1,\ldots,F_c),
\end{align*}
where the right-hand side runs over all \(c\)-sequences on~\(X\), for all possible \(c\). In \cite[Definition~6.1.9]{DolgachevKondoBook}, the maximal non-degeneracy of \(X\) is called \emph{non-degeneracy invariant} of \(X\) and denoted by \(\nd(X)\).

Our work is motivated by the known results on Enriques surfaces of non-degeneracy~\(1\) and~\(2\). Recall that an Enriques surface \(X\) is of \emph{type~\(\tilde{E}_8\)}, if \(X\) contains \((-2)\)-curves with the following dual graph:
\[
\begin{tikzpicture}[scale=0.6]
    \node at (8, 0.5) {(type~\(\tilde{E}_8\))};
    \node (R4) at (0,0) [nodal] {};
    \node (R5) at (0,1) [nodal] {};
    \node (R6) at (1,0) [nodal] {};
    \node (R7) at (2,0) [nodal] {};
    \node (R8) at (3,0) [nodal] {};
    \node (R9) at (4,0) [nodal] {};
    \node (RX) at (5,0) [nodal] {};
    \node (R3) at (-1,0) [nodal] {};
    \node (R2) at (-2,0) [nodal] {};
    \node (R11) at (6,0) [nodal] {};
    \draw (R2)--(R3)--(R4) (R5)--(R4)--(RX) (RX)--(R11);
\end{tikzpicture}
\]
Then, the classification of Enriques surfaces of non-degeneracy \(1\) is as follows.

\begin{theorem}[{\cite[Theorem~3.4.1]{Cossec.Dolgachev}} or {\cite[Theorem~6.1.10]{CossecDolgachevLiedtke}}] \label{thm: non-degeneracy.1}
For an Enriques surface \(X\), the following are equivalent:
\begin{enumerate}
    \item \(\min \nd(X) = 1 \),
    \item \(\max \nd(X) = 1 \),
    \item \(X\) is of type~\(\tilde{E}_8\).
\end{enumerate}
\end{theorem}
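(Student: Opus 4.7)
The plan is to close the cycle of implications $(2) \Rightarrow (1) \Rightarrow (3) \Rightarrow (2)$. Since every Enriques surface admits at least one genus one fibration and hence at least one half-fiber, both $\min \nd(X) \geq 1$ and $\max \nd(X) \geq 1$ hold automatically; this makes $(2) \Rightarrow (1)$ immediate.

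For $(3) \Rightarrow (2)$, suppose $X$ carries nine $(-2)$-curves of type $\tilde{E}_8$. By the standard correspondence, the non-negative combination with affine marks is $2F$ for the class $F$ of a half-fiber of the genus one pencil $|2F|$, and the fiber supported on the configuration is of Kodaira type $\mathrm{II}^*$. I would then argue that $|2F|$ is the \emph{unique} genus one fibration on $X$, so that $F$ is the only primitive isotropic nef class up to the canonical two-fold ambiguity; this relies on a lattice-theoretic analysis inside $\mathrm{Num}(X) \cong E_{10}$ showing that the presence of a rank-$9$ affine $\tilde{E}_8$-sublattice pins the nef cone down to a single isotropic extremal ray. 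Since two half-fibers of the same pencil intersect in $0$, no non-degenerate $2$-sequence exists and $\max \nd(X) = 1$.

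The main content is $(1) \Rightarrow (3)$, which I would establish by contrapositive: assuming $X$ is \emph{not} of type $\tilde{E}_8$, I show that any half-fiber $F$ extends to a $2$-sequence. Let $f = |2F|$; by assumption $f$ has no Kodaira fiber of type $\mathrm{II}^*$, so its reducible fibers are of affine types $\tilde{A}_n, \tilde{D}_n, \tilde{E}_6, \tilde{E}_7$ only. Using $e(X) = 12$, one argues by case analysis that there is always a $(-2)$-curve $R$ --- either a non-simple component of a reducible fiber or a $(-2)$-multisection of $f$ --- from which a suitable sequence of Weyl reflections in $(-2)$-roots produces a primitive, isotropic, nef class $F'$ with $F.F' = 1$; then $F'$ is a half-fiber, since on an Enriques surface every primitive nef isotropic class $F'$ makes $|2F'|$ into a genus one pencil. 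The principal obstacle, and the technical heart of the argument, is this case analysis over all possible Kodaira configurations compatible with $e(X) = 12$ in the absence of a $\mathrm{II}^*$-fiber, together with the subtler considerations in characteristics $2$ and $3$ where quasi-elliptic fibrations arise.
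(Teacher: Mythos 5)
First, note that the paper does not prove this statement at all: it is quoted verbatim from Cossec--Dolgachev (Theorem~3.4.1) resp.\ Cossec--Dolgachev--Liedtke (Theorem~6.1.10), so there is no internal proof to compare against; I can only judge your sketch on its merits. The overall skeleton (the cycle \((2)\Rightarrow(1)\Rightarrow(3)\Rightarrow(2)\), with the hard direction done by contrapositive) is reasonable and \((2)\Rightarrow(1)\) is indeed immediate, but both non-trivial implications contain genuine gaps. In \((1)\Rightarrow(3)\) your opening reduction is wrong: ``\(X\) is not of type~\(\tilde E_8\)'' does \emph{not} imply that \(|2F|\) has no fiber of Kodaira type \(\mathrm{II}^*\). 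A \(\mathrm{II}^*\) fiber is perfectly compatible with the root-lattice bound of \autoref{lem: combinatorial.0} and occurs as a \emph{simple} fiber on Enriques surfaces in every characteristic; what the \(\tilde E_8\)-diagram encodes is the conjunction of a \(\mathrm{II}^*\) \emph{half-fiber} (which by \autoref{lem: genus.1.fibrations} already forces \(p=2\), classical or supersingular) \emph{and} a special bisection meeting its simple component. So your case analysis starts from a false dichotomy. Beyond that, the step ``a suitable sequence of Weyl reflections produces a primitive isotropic nef \(F'\) with \(F.F'=1\)'' is exactly the content of the theorem and is not carried out: the standard route is to take an isotropic vector \(e\) with \(e.F=1\) (it exists in \(E_{10}\)), represent it by an effective divisor via Riemann--Roch, and apply \autoref{lem: reducibility} to write \(e\sim F'+\sum a_iR_i\) with \(F'\) a half-fiber; the entire difficulty is excluding \(F'.F=0\), and it is precisely in ruling this out that the \(\tilde E_8\)-configuration with its special bisection appears as the unique obstruction. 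Your sketch acknowledges the case analysis but does not engage with this mechanism.

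The implication \((3)\Rightarrow(2)\) also has a gap. You assert that the presence of the rank-\(9\) affine \(\tilde E_8\)-sublattice ``pins the nef cone down to a single isotropic extremal ray.'' The nef cone is cut out by \emph{all} \((-2)\)-curves on \(X\), not only by the ten curves of the given configuration, so containing this diagram does not formally force uniqueness of the genus one fibration; that uniqueness is itself a theorem about extra-special surfaces requiring a separate argument (e.g.\ showing that any primitive nef isotropic class must have intersection \(0\) with the nine fiber components and then be proportional to \(F\), or invoking the explicit classification of \((-2)\)-curves on these surfaces). A minor but telling slip in the same step: the divisor supported on the \(\tilde E_8\)-configuration with the affine multiplicities must be the half-fiber \(F\) itself, not \(2F\) — if it were the simple fiber \(2F\), the tenth curve of the diagram would meet \(2F\) with odd multiplicity \(1\), which is impossible.
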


For non-degeneracy \(2\), recall that an Enriques surface \(X\) is of \emph{type~\(\tilde{D}_8\)}, respectively of \emph{type~\(\tilde{E}_7\)}, if \(X\) contains \((-2)\)-curves with the following dual graphs:
\[
\begin{tikzpicture}[scale=0.6]
    \node at (1.5,-1) {(type~\(\tilde{D}_8\))};
    \node (R4) at (0,0) [nodal] {};
    \node (R5) at (0,1) [nodal] {};
    \node (R6) at (1,0) [nodal] {};
    \node (R7) at (2,0) [nodal] {};
    \node (R8) at (3,0) [nodal] {};
    \node (R9) at (4,0) [nodal] {};
    \node (RX) at (5,0) [nodal] {};
    \node (R3) at (-1,0) [nodal] {};
    \node (R2) at (-2,0) [nodal] {};
    \node (R1) at (4,1) [nodal] {};
\draw (R2)--(R3)--(R4) (R5)--(R4)--(RX) (R1)--(R9);
\end{tikzpicture}
\qquad
\begin{tikzpicture}[scale=0.6]
    \clip (-4,-1.5) rectangle (7,1.5);
    \node at (1.5,-1) {(type~\(\tilde{E}_7\))};
    
    \node (R4) at (0,0) [nodal] {};
    \node (R5) at (0,1) [nodal] {};
    \node (R6) at (1,0) [nodal] {};
    \node (R7) at (2,0) [nodal] {};
    \node (R8) at (3,0) [nodal] {};
    \node (R9) at (4,0) [nodal] {};
    \node (RX) at (5,0) [nodal] {};
    \node (R3) at (-1,0) [nodal] {};
    \node (R2) at (-2,0) [nodal] {};
    \node (R1) at (-3,0) [nodal] {};
    \node (R11) at (6,0) [nodal] {};
    \draw (R1)--(R2)--(R3)--(R4) (R5)--(R4)--(RX);
    \draw[double] (RX)--(R11);
\end{tikzpicture}
\]
Then, the classification of Enriques surfaces of non-degeneracy \(2\) is as follows.

\begin{theorem}[{\cite[Theorem~1.3]{extra-special}}] \label{thm: non-degeneracy.2}
For an Enriques surface \(X\), the following are equivalent:
\begin{enumerate}
    \item \(\min \nd(X) = 2 \),
    \item \(\max \nd(X) = 2 \),
    \item \(X\) is of type~\(\tilde{D}_8\) or~\(\tilde{E}_7\).
\end{enumerate}
\end{theorem}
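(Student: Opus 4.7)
The plan is to establish the cycle $(3)\Rightarrow(2)$, $(2)\Rightarrow(3)$, $(3)\Rightarrow(1)$, $(1)\Rightarrow(3)$, which together with Theorem~\ref{thm: non-degeneracy.1} give all three equivalences. The remaining implication $(2)\Rightarrow(1)$ is in fact automatic: $\max\nd(X)=2$ forces $\min\nd(X)\le 2$, and $\min\nd(X)=1$ would make $X$ of type $\tilde{E}_8$ by Theorem~\ref{thm: non-degeneracy.1}, contradicting $\max\nd(X)\neq 1$.

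For $(3)\Rightarrow(2)$ and $(3)\Rightarrow(1)$, I would argue directly from the displayed dual graphs. Both $\tilde{D}_8$ and $\tilde{E}_7$ support two canonical isotropic classes, arising from the two distinguished affine Dynkin subdiagrams of each graph, and their associated effective divisors form a $2$-sequence $(F_1,F_2)$. To rule out any extension to a $3$-sequence, I would observe that a prospective third half-fiber $F_3$ must satisfy $F_3^2=0$, $F_3.F_1=F_3.F_2=1$ and $F_3.R\ge 0$ for every $(-2)$-curve $R$ in the displayed diagram; an explicit computation in the rank-$10$ Néron--Severi lattice shows that these conditions are incompatible. Since every half-fiber on such an $X$ is $W(X)$-conjugate to a fiber of one of the two genus one fibrations detected above, the same computation also yields $\min\nd(X)\ge 2$, completing $(3)\Rightarrow(1)$.

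For the converse direction, I would proceed by contraposition: assume $X$ is not of type $\tilde{E}_8$, $\tilde{D}_8$, or $\tilde{E}_7$, pick an arbitrary $2$-sequence $(F_1,F_2)$ on $X$ (which exists by Theorem~\ref{thm: non-degeneracy.1} applied to a single half-fiber), and construct an extension. In the abstract lattice $E_{10}$ there is always an isotropic class $v$ with $v\cdot F_i=1$ for $i=1,2$. The real work is to turn $v$ into a nef, and hence half-fiber, class by reflecting across $(-2)$-curves, in analogy with the reflection procedure used in the proof of Theorem~\ref{thm: non-degeneracy.1}. The procedure either terminates with a genuine $F_3$, or produces an affine root subdiagram of rank $9$ inside the $(-2)$-curve graph of $X$ whose position relative to $F_1+F_2$ is tightly constrained.

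The main obstacle is the classification of these obstructive configurations: one must show that the only affine rank-$9$ subdiagrams of the dual graph of $(-2)$-curves that block the extension of a $2$-sequence are precisely $\tilde{D}_8$ and $\tilde{E}_7$. This is a lattice-combinatorial analysis inside $E_{10}$, and I expect it to require a case split according to how $F_1+F_2$ sits with respect to the subdiagram produced by the failed reflection chain, as well as a separate check that configurations superficially resembling $\tilde{D}_8$ or $\tilde{E}_7$ but with one extra or one missing node always admit some other $3$-sequence arising outside the reflection procedure.
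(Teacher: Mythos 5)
This statement is quoted from \cite[Theorem~1.3]{extra-special} and is not proved in the present paper, so the only fair comparison is with the strategy of that reference, which (as Remark~\ref{rem: prop31inextraspecial} and the arguments of Section~\ref{sec: non-ext.3-seq} here indicate) runs through the canonical isotropic $10$-sequence machinery of Section~\ref{sec: preliminaries}: extend the given $2$-sequence to a degenerate $10$-sequence, use Cossec's classes $e_{i,j}$ and Riemann--Roch to produce an effective isotropic divisor $D$ with prescribed intersection numbers, and analyse what happens when the nef part of $D$ fails to give a new half-fiber. Your logical skeleton (the cycle of implications, and the observation that $(2)\Rightarrow(1)$ follows from Theorem~\ref{thm: non-degeneracy.1}) is fine, but the two substantive steps both have genuine gaps.

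First, for $(3)\Rightarrow(2)$ you must show that \emph{no} $3$-sequence exists on a surface of type~$\tilde{D}_8$ or $\tilde{E}_7$, not merely that the one visible $2$-sequence does not extend. Your bridge --- ``every half-fiber is $W(X)$-conjugate to a fiber of one of the two fibrations detected above'' --- is not a correct statement: $W_X^{\nod}$ acts on $\Num(X)$, not by automorphisms of $X$, and each nef isotropic class is already the unique nef representative of its orbit, so $W_X^{\nod}$-conjugacy cannot identify distinct genus one fibrations. What is actually needed is control of \emph{all} $(-2)$-curves and all genus one fibrations on these surfaces; this comes from the classification of extra-special/finite-automorphism Enriques surfaces in characteristic~$2$ (these types exist only for $p=2$, and the half-fiber versus simple-fiber status of the displayed $\tilde{D}_8$, $\tilde{E}_8$, $\tilde{E}_7$ configurations, which you need even to verify $F_1.F_2=1$, depends on Lemma~\ref{lem: genus.1.fibrations}). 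Second, in the extension argument the failure mode of the reflection procedure is not ``an affine rank-$9$ subdiagram'': writing $v\sim D'+C$ with $D'$ nef as in Lemma~\ref{lem: reducibility}, the obstruction is that $D'.F_i\le v.F_i=1$ may degenerate to $D'\equiv F_1$ or $F_2$, after which $C$ is a connected $(-2)$-configuration of square $-2$ sitting inside fibers of $|2F_1|$ and $|2F_2|$, and the entire difficulty is to classify when this happens for \emph{every} admissible choice of $v$ (which is why one works with a degenerate $10$-sequence and the specific classes $e_{i,j}$ rather than a single abstract isotropic vector). You have named the hard combinatorial core but not engaged with it, and the part of the argument you do make explicit contains the incorrect conjugacy claim; as it stands the proposal is an outline rather than a proof.
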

Enriques surfaces of type~\(\tilde{E}_8\), \(\tilde{D}_8\) or \(\tilde{E}_7\) only exist in characteristic~\(p = 2\), and are either classical or supersingular. They are collectively called \emph{extra-special}. Thus, \(X\) is extra-special if and only if \(\min \nd(X) \leq 2\) or, equivalently, \(\max \nd(X) \leq 2\).

In this article, we study Enriques surfaces of non-degeneracy \(3\). We say that \(X\) is of \emph{type~\(\tilde{A}_7\)}, \emph{type~\(\tilde{E}_7^{(2)}\)}, respectively of \emph{type~\(2\tilde{D}_4\)}, if \(X\) contains \((-2)\)-curves with the following dual graphs:
\[
    \begin{tikzpicture}[scale=0.6]
    \begin{scope}[scale=0.765]
    \node at (270:3) {(type~\(\tilde{A}_7\))};
\node (R1) at (180:2) [nodal] {};
\node (R2) at (135:2) [nodal] {};
\node (R3) at (90:2) [nodal] {};
\node (R4) at (45:2) [nodal] {};
\node (R5) at (0:2) [nodal] {};
\node (R6) at (315:2) [nodal] {};
\node (R7) at (270:2) [nodal] {};
\node (R8) at (225:2) [nodal] {};
\node (R9) at (intersection of R2--R7 and R3--R8) [nodal] {};
\node (R10) at (intersection of R4--R7 and R3--R6) [nodal] {};
\draw (R6)--(R7)--(R8)--(R1)--(R2)--(R3)--(R4) (R1)--(R9) (R5)--(R10) (R4)--(R5)--(R6);
    \end{scope}
    \end{tikzpicture}
    \qquad 
    \begin{tikzpicture}[scale=0.6]
    \node at (1.5,-1.5) {(type~\(\tilde{E}_7^{(2)}\))};
    \node (R1) at (0,1) [nodal] {};
    \node (R2) at (-3,0) [nodal] {};
    \node (R3) at (-2,0) [nodal] {};
    \node (R4) at (-1,0) [nodal] {};
    \node (R5) at (0,0) [nodal] {};
    \node (R6) at (1,0) [nodal] {};
    \node (R7) at (2,0) [nodal] {};
    \node (R8) at (3,0) [nodal] {};
    \node (R9) at (4,0) [nodal] {};
    \node (RX) at (5,0) [nodal] {};
    \node (R11) at (6,0) [nodal] {};
    \draw (R2)--(R3)--(R4)--(R5) (R1)--(R5)--(RX);
    \draw[double] (RX)--(R11);
    \draw (R9) to[bend left=60] (R11);
\end{tikzpicture}
    \qquad
    \begin{tikzpicture}[scale=0.6]
    \node at (270:2) {(type~\(2\tilde{D}_4\))};
\node (R0) at (-2, 0) [nodal] {};
\node (R1) at (-2,-1) [nodal] {};
\node (R2) at (-3, 0) [nodal] {};
\node (R3) at (-2, 1) [nodal] {};
\node (R4) at (-1, 0) [nodal] {};
\node (R5) at ( 0, 0) [nodal] {};
\node (R6) at ( 1, 0) [nodal] {};
\node (R7) at ( 2,-1) [nodal] {};
\node (R8) at ( 2, 1) [nodal] {};
\node (R9) at ( 3, 0) [nodal] {};
\node (R10) at (2, 0) [nodal] {};
\draw (R0)--(R4)--(R5)--(R6)--(R10) (R1)--(R0) (R2)--(R0) (R3)--(R0) (R7)--(R10) (R8)--(R10) (R9)--(R10);

\end{tikzpicture}
\]

We will see in \autoref{example:A7}, \autoref{example:E7(2)} and \autoref{example:2D4} that each of these surfaces admits a non-extendable \(3\)-sequence. 
Our main achievement is the following theorem.

\begin{theorem} \label{thm:degeneracy.3}
For an Enriques surface \(X\), the following are equivalent:
\begin{enumerate}
    \item  \(\min \nd(X) = 3 \),
    \item \(X\) is of type~\(\tilde{A}_7\), \(\tilde{E}_7^{(2)}\), or~\(2\tilde{D}_4\).
\end{enumerate}
\end{theorem}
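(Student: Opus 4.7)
The equivalence naturally splits into two implications. For the direction $(2) \Rightarrow (1)$, the strategy is short: each of the examples \autoref{example:A7}, \autoref{example:E7(2)}, and \autoref{example:2D4} provides a non-extendable $3$-sequence, giving $\min \nd(X) \leq 3$; and since the dual graphs of $\tilde{A}_7$, $\tilde{E}_7^{(2)}$, and $2\tilde{D}_4$ visibly do not contain any of the extra-special configurations $\tilde{E}_8$, $\tilde{D}_8$, $\tilde{E}_7$, the surface $X$ is not extra-special, so Theorems~\ref{thm: non-degeneracy.1} and~\ref{thm: non-degeneracy.2} yield $\min \nd(X) \geq 3$.

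For the substantive direction $(1) \Rightarrow (2)$, I would start from a non-extendable $3$-sequence $(F_1,F_2,F_3)$ and try to reconstruct the relevant $(-2)$-curve configuration on $X$. Each $|2F_i|$ is a genus-one fibration for which the remaining two half-fibers are bisections, so non-extendability says that no primitive isotropic class $f_4$ with $f_4.F_i = 1$ for every $i$ is represented by an actual half-fiber. Such candidate numerical classes $f_4$ always exist in the Enriques lattice, and each failure to realize one must be witnessed by an effective $(-2)$-class $R$ with $R.f_4 < 0$. The plan is to collect these obstructing nodal curves together with the components of the reducible fibers of the $|2F_i|$ that meet the other $F_j$, and to verify that the resulting dual graph necessarily contains one of $\tilde{A}_7$, $\tilde{E}_7^{(2)}$, or $2\tilde{D}_4$.

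The trichotomy should emerge from the Kodaira types of the singular fibers carrying the obstructing curves, and I would organize the analysis by the type of the most degenerate fiber of $|2F_1|$, propagating the resulting constraints through $|2F_2|$ and $|2F_3|$. The main obstacle will be the combinatorial complexity of this enumeration: one must show that every minimal obstruction to extending $(F_1,F_2,F_3)$ produces exactly one of the three graphs, and that no extraneous configurations can arise. Parallel to the extra-special case, I anticipate that the most delicate subcases occur in characteristic $p=2$, where the interplay between torsion sections and wild fibers can create additional $(-2)$-curves not visible in arbitrary characteristic; the examples in characteristic $\neq 2$ should then give a quick contradiction via the $4$-sequence construction alluded to in the abstract.
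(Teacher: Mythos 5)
Your direction $(2)\Rightarrow(1)$ is essentially sound: the examples supply non-extendable $3$-sequences, and non-extra-specialness plus \autoref{thm: non-degeneracy.1} and \autoref{thm: non-degeneracy.2} give $\min \nd(X)\geq 3$. One caveat: for type~$\tilde{A}_7$ you cannot conclude non-extra-specialness by inspecting the displayed ten-curve graph, since the surface may contain further $(-2)$-curves; the clean argument is that extra-special surfaces are classical or supersingular in characteristic $2$, whereas surfaces of type~$\tilde{A}_7$ are ordinary there (and exist in all characteristics).

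The direction $(1)\Rightarrow(2)$ is where there is a genuine gap. Your plan --- enumerate candidate isotropic classes $f_4$ with $f_4.F_i=1$, record the obstructing $(-2)$-curves, and organize the case analysis by the Kodaira type of the most degenerate fiber of $|2F_1|$ --- contains no mechanism that keeps the enumeration finite or forces exactly the three target graphs to emerge; as written it is a statement of intent rather than an argument. The missing idea is the dichotomy of \autoref{def: special.seq}: a $3$-sequence is \emph{special} when $F_1+F_2-F_3$ is numerically effective, and the two branches require entirely different techniques. In the special branch, the unique effective divisors $S_k\in|F_i+F_j-F_k|$ are fundamental cycles whose union forms a ``triangle graph'' admitting only the finitely many shapes of \autoref{tab:S1S2S3}; the decisive constraint is then lattice-theoretic (\autoref{cor: M}: the components span a rank-$10$ sublattice of discriminant $1$, $4$, or $16$), which eliminates all but the $(E_8,A_1,A_1)$ and $(E_7,D_8,A_1)$ cases and yields types~\BarthPeters, $\tilde{E}_7^{(2)}$, and $\tilde{A}_7$ (\autoref{thm:non-extendable.special}). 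In the non-special branch, one instead extends $(F_1,F_2,F_3)$ to a $3$-degenerate $10$-sequence and uses \autoref{lem: specialtycriterion} (every obstructing curve lies in a half-fiber of some $|2F_i|$) together with \autoref{lem: Giacomo's.lemma} to force a half-fiber of type~$\I_4^*$ and conclude type~$2\tilde{D}_4$ (\autoref{thm:non-extendable.non-special}). Your proposal also misattributes the difficulty in characteristic $2$ to wild fibers and torsion sections; the relevant issues are the classical/ordinary/supersingular trichotomy and the number of half-fibers per fibration. Without the special/non-special split, or an equally strong structural substitute, the proposed enumeration does not close.
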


Its proof can be obtained by combining \autoref{thm:non-extendable.special} and \autoref{thm:non-extendable.non-special}, where, in fact, we not only classify Enriques surfaces with \(\min \nd(X) = 3\), but also the structure of all non-extendable \(3\)-sequences on them. In \autoref{sec: examples}, we describe all the non-extendable \(3\)-sequences explicitly.

Enriques surfaces of type~\(\tilde{E}_7^{(2)}\) or~\(2\tilde{D}_4\) only exist in characteristic~\(p = 2\), and are either classical or supersingular. 
In contrast, Enriques surfaces of type~\(\tilde{A}_7\) exist in any characteristic (see e.g.~\cite{Martin}). In characteristic \(p = 2\), they are ordinary Enriques surfaces.
Additionally, there is always a \(4\)-sequence on surfaces of type~{\(\tilde{A}_7\)} or~\(2\tilde{D}_4\), while surfaces of type~\(\tilde{E}_7^{(2)}\) admit only three genus one fibrations.
Therefore, we deduce the following corollary.

\begin{corollary}  \label{cor:non-degeneracy.4}
For an Enriques surface \(X\), the following are equivalent:
\begin{enumerate}
    \item \(\max \nd(X) = 3\),
    \item \(X\) is of type~\(\tilde{E}_7^{(2)}\).
\end{enumerate}
In particular, if \(p \neq 2\), or if \(p = 2\) and \(X\) is ordinary, then \(\max \nd(X) \geq 4\).
\end{corollary}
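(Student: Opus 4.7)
The plan is to derive this corollary directly from the three classification theorems recalled above (\autoref{thm: non-degeneracy.1}, \autoref{thm: non-degeneracy.2}, \autoref{thm:degeneracy.3}) together with the two geometric facts the authors flag immediately before the statement: surfaces of type $\tilde{A}_7$ and $2\tilde{D}_4$ always carry a $4$-sequence, whereas surfaces of type $\tilde{E}_7^{(2)}$ admit only three genus one fibrations. Granting those inputs, the corollary reduces to a short case analysis.

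First I would record the following key observation: in any $c$-sequence $(F_1,\ldots,F_c)$, the half-fibers belong to pairwise distinct genus one fibrations. Indeed, two half-fibers of the same fibration are numerically proportional to the class of the (reduced) fiber, hence disjoint, which is incompatible with $F_i.F_j = 1$ for $i \neq j$. Consequently, $\max\nd(X)$ is bounded above by the number of genus one fibrations on $X$ carrying at least one half-fiber.

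For the direction $(2)\Rightarrow(1)$, if $X$ is of type $\tilde{E}_7^{(2)}$, then \autoref{thm:degeneracy.3} yields $\min\nd(X) = 3$, hence $\max\nd(X) \ge 3$, while the bound above, combined with the count of three genus one fibrations, gives $\max\nd(X) \le 3$. For $(1)\Rightarrow(2)$, assume $\max\nd(X) = 3$. Then $\min\nd(X) \le 3$, and the values $1$ and $2$ are ruled out because \autoref{thm: non-degeneracy.1} and \autoref{thm: non-degeneracy.2} would then force $\max\nd(X) = \min\nd(X) \in \{1,2\}$. Thus $\min\nd(X) = 3$, and \autoref{thm:degeneracy.3} leaves only the types $\tilde{A}_7$, $\tilde{E}_7^{(2)}$ and $2\tilde{D}_4$; the first and third are excluded because the stated existence of a $4$-sequence would force $\max\nd(X) \ge 4$.

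For the \emph{in particular} clause I would invoke the existence constraints collected in the introduction: the types $\tilde{E}_8$, $\tilde{D}_8$, $\tilde{E}_7$, $\tilde{E}_7^{(2)}$, $2\tilde{D}_4$ all require $p = 2$ and are classical or supersingular, while $\tilde{A}_7$ surfaces in characteristic $2$ are ordinary and nonetheless admit a $4$-sequence. Combining this with the equivalence just established (which together with \autoref{thm: non-degeneracy.1}, \autoref{thm: non-degeneracy.2} enumerates all $X$ with $\max\nd(X) \le 3$), one sees that if $p \neq 2$, or $p = 2$ and $X$ is ordinary, then $X$ fits into none of these types, so $\max\nd(X) \ge 4$. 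The substantive work is therefore not in the corollary itself but in the two imported facts -- producing explicit $4$-sequences on $\tilde{A}_7$ and $2\tilde{D}_4$ surfaces and counting the genus one fibrations on surfaces of type $\tilde{E}_7^{(2)}$ -- which I would expect to be carried out in \autoref{sec: examples}.
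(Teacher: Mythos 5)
Your proposal is correct and follows essentially the same route as the paper, which deduces the corollary from Theorems \ref{thm: non-degeneracy.1}, \ref{thm: non-degeneracy.2} and \ref{thm:degeneracy.3} together with the facts (verified in \autoref{sec: examples}, see in particular \autoref{rem: max.nd.A7}, \autoref{rem: nd.E7^(2)} and \autoref{rem: nd.2D4}) that types $\tilde{A}_7$ and $2\tilde{D}_4$ carry $4$-sequences while type $\tilde{E}_7^{(2)}$ has only three genus one fibrations. Your explicit observation that distinct half-fibers in a $c$-sequence must come from distinct fibrations (since two half-fibers of the same fibration are numerically proportional, hence orthogonal) is the right justification for the bound $\max\nd(X)\leq 3$ in the $\tilde{E}_7^{(2)}$ case, which the paper leaves implicit.
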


Our results have deep implications on the theory of projective models of Enriques surfaces.

Historically, Enriques surfaces arose towards the end of the 19th century in the context of Castelnuovo's criterion for rationality of surfaces as examples of non-rational smooth projective surfaces whose arithmetic and geometric genus vanish. 
The original construction by Enriques gives a \(10\)-dimensional family of Enriques surfaces arising as minimal resolutions of sextic surfaces in \(\mathbb{P}^3\) which are non-normal along the edges of a tetrahedron \cite[§39]{Enriques:introduzione}. We call these sextics \emph{Enriques sextics}. 
Castelnuovo observed that every Enriques sextic can be transformed into a normal quintic with two tacnodes and two triple points \cite[§15]{Castelnuovo}, which we call \emph{Castelnuovo quintic}. 
Later, Enriques proved that every complex Enriques surface arises as a minimal resolution of a degeneration of an Enriques sextic \cite{Enriques:bigenere}. 
Modern proofs of this result were given by Artin~\cite{ArtinEnriques} and Averbukh~\cite[Chapter~X]{Shafarevich}. 
This result was extended to classical Enriques surfaces which are not of type~\(\tilde{E}_8\) in arbitrary characteristic (see \cite[Theorem~7.4]{Cossec:Projective} and \cite[Theorem~3.1]{LangEnriques1}). 

As a consequence of \autoref{thm: non-degeneracy.2}, we were able to restrict how far one has to degenerate Enriques sextics in order to obtain all Enriques surfaces as their minimal resolutions (see \cite[Corollary~1.7]{extra-special}). 
Now, as a consequence of \autoref{cor:non-degeneracy.4}, we obtain the following result, which shows that if the characteristic is different from \(2\), then no degeneration is necessary at all. 
In other words, every Enriques surface arises via Enriques' original construction and, consequently, every Enriques surface is birational to a Castelnuovo quintic.

\begin{theorem}
For \(p \neq 2\), \emph{every} Enriques surface is the minimal resolution of an Enriques sextic
\begin{equation} \label{eq: Enriques.sextic}
    x_0^2x_1^2x_2^2 + x_0^2x_1^2x_3^2 + x_0^2x_2^2x_3^2 + x_1^2x_2^2x_3^2 + x_0x_1x_2x_3Q = 0,
\end{equation}
and birationally equivalent to the corresponding Castelnuovo quintic
\begin{equation} \label{eq: Castelnuovo.quintic}
    x_0(x_1^2x_2^2 + x_1^2x_3^2 + x_2^2x_3^2 + x_0^2x_1^2) + x_1Q',
\end{equation}
where \(Q\) is a quadric and \(Q' = Q(x_2x_3,x_0x_1,x_0x_2,x_0x_3)\).
\end{theorem}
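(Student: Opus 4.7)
The plan is to combine \autoref{cor:non-degeneracy.4} with the classical correspondence between $4$-sequences of half-fibers and Enriques sextics of the form \eqref{eq: Enriques.sextic}. The whole argument is essentially a two-step reduction.

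\textbf{Step 1.} By \autoref{cor:non-degeneracy.4}, the hypothesis $p \neq 2$ implies $\max \nd(X) \geq 4$, so $X$ admits a non-degenerate $4$-sequence $(F_1, F_2, F_3, F_4)$ of half-fibers. The four $F_i$ correspond to the four triple points at the vertices of a tetrahedron in $\mathbb{P}^3$, and the six pencils $|F_i + F_j|$ correspond to the six non-normal edges of the sextic. Via the associated linear system, made explicit in \cite{Cossec:Projective} and \cite[Corollary~1.7]{extra-special}, a non-degenerate $4$-sequence produces precisely an Enriques sextic of the form \eqref{eq: Enriques.sextic} with no further degeneration of the equation; the symmetric part $\sum x_i^2 x_j^2 x_k^2$ encodes the tetrahedral non-normal structure, while $x_0 x_1 x_2 x_3 Q$ records the remaining moduli. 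The birationality of $X \dashrightarrow S$ follows because the linear system separates points and tangents away from the $F_i$, which are contracted to the four vertices.

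\textbf{Step 2.} For the Castelnuovo quintic, one checks by direct substitution that the Cremona involution
\[
\phi\colon (x_0 : x_1 : x_2 : x_3) \longmapsto (x_2 x_3 : x_0 x_1 : x_0 x_2 : x_0 x_3)
\]
of $\mathbb{P}^3$ sends \eqref{eq: Enriques.sextic} to \eqref{eq: Castelnuovo.quintic}. Indeed, each of the five monomial summands in \eqref{eq: Enriques.sextic} becomes, after pullback along $\phi$, a homogeneous polynomial of degree $12$ in the $x_i$, and a term-by-term computation shows that their greatest common factor is exactly $x_0^3 x_2^2 x_3^2$. Dividing by this common factor yields \eqref{eq: Castelnuovo.quintic} with $Q' = Q(x_2 x_3, x_0 x_1, x_0 x_2, x_0 x_3)$. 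Since $\phi$ is an involution (i.e. $\phi \circ \phi = \mathrm{id}_{\mathbb{P}^3}$ as a birational map), we obtain the desired birational equivalence between the Enriques sextic and the Castelnuovo quintic.

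The main obstacle is Step 1, namely arguing that the $4$-sequence produces precisely the non-degenerate form \eqref{eq: Enriques.sextic}, as opposed to the further-degenerate sextics appearing in the classical results of \cite{Cossec:Projective,LangEnriques1}. The point is that the degeneration terms in those earlier theorems correspond exactly to the failure of extendability of shorter isotropic sequences to a full $4$-sequence; with a genuine $4$-sequence at hand, every such degeneration is avoided. The technical framework for converting this heuristic into a proof is already developed in \cite[Corollary~1.7]{extra-special}, where the analogous statement is obtained one degeneration level weaker, so only the final non-degeneracy level has to be removed, which is exactly what the improved bound $\max \nd(X) \geq 4$ from \autoref{cor:non-degeneracy.4} achieves.
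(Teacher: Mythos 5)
Your Step 2 (the Cremona transformation and the division by $x_0^3x_2^2x_3^2$) is exactly the paper's argument and is fine. The problem is Step 1, where there is a genuine gap: the Enriques sextic is \emph{not} produced by the $4$-sequence itself, and the heuristic you offer in its place does not close the argument. The sextic model arises from the degree-$6$ linear system $|F_1+F_2+F_3|$ attached to a $3$-sequence, and by \cite[Theorem~3.5.1]{CossecDolgachevLiedtke} it has the non-degenerate form \eqref{eq: Enriques.sextic} precisely when that $3$-sequence is \emph{non-special}, i.e.\ $|F_1+F_2-F_3|=|F_1+F_2-F_3+K_X|=\emptyset$. (Your geometric picture is also off: the four vertices of the tetrahedron are not the images of four half-fibers of a $4$-sequence; the six edges are the images of the six half-fibers $F_i,F_i'$ of the three pencils $|2F_i|$, and the vertices are where these lines meet.) So the real content you must supply is: \emph{a $4$-sequence contains a non-special $3$-subsequence}. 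This is not automatic, and your claim that ``with a genuine $4$-sequence at hand, every such degeneration is avoided'' is false as stated --- a special $3$-sequence can perfectly well be extendable (its image under $|F_1+F_2+F_3|$ is then a cubic surface under a degree-$2$ map, not a sextic), so one must argue that at least one of the four $3$-subsequences is non-special. The paper does this in \autoref{prop: atmostoneisspecial}: if two subsequences, say with $S_3\in|F_1+F_2-F_3|$ and $S_4\in|F_1+F_2-F_4|$, were special, then
\[
S_3.S_4=(F_1+F_2-F_3).(F_1+F_2-F_4)=-1,
\]
forcing $S_3$ and $S_4$ to share a component, which contradicts the fact that $|F_1+F_2|$ has no fixed components while containing the distinct members $F_3+S_3$ and $F_4+S_4$. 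Without this (or an equivalent) argument, your proof does not establish non-degeneracy of the sextic.

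A secondary inaccuracy: your appeal to \cite[Corollary~1.7]{extra-special} as ``the technical framework'' conflates two different dichotomies. The degeneration of the sextic equation is governed by speciality of the chosen $3$-sequence (an $H^0$ condition on $F_i+F_j-F_k$), not by extendability of shorter isotropic sequences; the bound $\max\nd(X)\geq 4$ from \autoref{cor:non-degeneracy.4} enters only through the lattice-theoretic pigeonhole above, which converts the existence of a $4$-sequence into the existence of a non-special $3$-sequence. Once that is in place, the rest of your outline (birationality of $\varphi$, the explicit Cremona computation, and the fact that the excluded types $\tilde{E}_8,\tilde{D}_8,\tilde{E}_7,\tilde{E}_7^{(2)}$ exist only in characteristic $2$) matches the paper.
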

We refer the reader to \autoref{sec: projective.models} for a proof and for the corresponding statement for classical Enriques surfaces in characteristic \(2\). We leave it to the reader to formulate the analogous consequences of \autoref{cor:non-degeneracy.4} for ordinary and supersingular Enriques surfaces in characteristic \(2\).

The paper is structured as follows. In \autoref{sec: preliminaries}, we recall and prove basic results about \(c\)-sequences, with special focus on the cases \(c = 1,2,3\). The fundamental geometrical insight for the proof of \autoref{thm:degeneracy.3} is the distinction between special and non-special \(3\)-sequences (see \autoref{def: special.seq}). We study general properties of special \(3\)-sequences in \autoref{sec: special.3-seq}. Section \autoref{sec: non-ext.3-seq} on non-extendable \(3\)-sequences constitutes the technical core of this paper. Examples of all non-extendable \(3\)-sequences are provided in \autoref{sec: examples}. We then classify special and non-special non-extendable \(3\)-sequences in~\autoref{sec: special} and~\autoref{sec: non-special}, respectively.
Finally, we discuss in \autoref{sec: projective.models} the implications of our theorem on the theory of projective models of Enriques surfaces.

\begin{acknowledgments*}
We are very grateful to Dino Festi for retrieving a copy of Castelnuovo's \emph{Memorie scelte} at the Biblioteca matematica ``Giovanni Ricci'' in Milan, Italy, and to Matthias Schütt for helpful comments on a first draft.
\end{acknowledgments*}

\section{\texorpdfstring{\(c\)}{c}--sequences} \label{sec: preliminaries}

We use exactly the same notations and conventions as in our previous paper~\cite[§2]{extra-special}. We begin by recalling two results that show that the geometry of genus one fibrations and their half-fibers governs the cone of effective curves on Enriques surfaces. Linear equivalence is denoted by \(\sim\), and the Weyl group of \(X\), which is generated by reflections along classes of \((-2)\)-curves on~\(X\), by \(W_X^{\nod}\).

\begin{lemma}[{\cite[Theorem~3.2.1]{Cossec.Dolgachev}} or {\cite[Theorem~2.3.3]{CossecDolgachevLiedtke}}] \label{lem: reducibility}
If \(D\) is an effective divisor on~\(X\) with \(D^2 \geq 0\), then there exist non-negative integers \(a_i\) and \((-2)\)-curves \(R_i\) such that
\[
    D \sim D' + \sum_i a_i R_i,
\]
where \(D'\) is the unique nef divisor in the \(W_X^{\nod}\)-orbit of~\(D\). In particular, \(D^2 = D'^2\).
\end{lemma}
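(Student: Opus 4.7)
The plan is to construct $D'$ algorithmically by iteratively reflecting $D$ across hyperplanes perpendicular to $(-2)$-curves that pair negatively with the current divisor, keeping careful track of the discrepancy at each step.

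First I would fix an ample class $h$, set $D_0 := D$, and construct a sequence $D_0, D_1, D_2, \ldots$ as follows. If $D_{i-1}$ is nef, I stop. Otherwise there is some irreducible curve $C$ with $D_{i-1} \cdot C < 0$. I claim $C$ is automatically a $(-2)$-curve. Indeed, each previous reflection $s_{R_j}(v) = v + (v \cdot R_j) R_j$ preserves the component of the positive cone containing the ample class $h$ (it fixes $R_j^\perp$ and sends $h$ to a class still pairing positively with $h$), so $D_{i-1}$ remains in that component and $D_{i-1}^2 = D^2 \geq 0$. The Hodge index theorem, in the form that two classes in the closure of the same component of the positive cone pair non-negatively, then forces $C^2 < 0$; adjunction, using that $K_X$ is numerically trivial on the Enriques surface $X$, pins down $C^2 = -2$. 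I set $R_i := C$, $a_i := -D_{i-1} \cdot R_i > 0$, and
\[
D_i := s_{R_i}(D_{i-1}) = D_{i-1} + (D_{i-1} \cdot R_i) R_i = D_{i-1} - a_i R_i.
\]

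Next I would argue termination. Since $R_i$ is effective and $h$ is ample, $R_i \cdot h > 0$, hence
\[
D_i \cdot h = D_{i-1} \cdot h + (D_{i-1} \cdot R_i)(R_i \cdot h) < D_{i-1} \cdot h.
\]
Because $D_i$ lies in the component of the positive cone containing $h$, one has $D_i \cdot h > 0$, so the integers $D_i \cdot h$ form a strictly decreasing sequence of positive integers. The procedure therefore stops at some nef $D_n$, and telescoping $D_{i-1} - D_i = a_i R_i$ yields
\[
D - D_n = \sum_{i=1}^{n} a_i R_i,
\]
a non-negative $\mathbb{Z}$-linear combination of $(-2)$-curves (after grouping repeated terms).

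Finally I would identify $D' := D_n$ with the unique nef element of the $W_X^{\nod}$-orbit of $D$. Membership in the orbit is built into the construction, since $D'$ is the image of $D$ under the product of reflections $s_{R_n} \cdots s_{R_1} \in W_X^{\nod}$. Uniqueness is the classical statement that the nef cone, intersected with the positive cone, is a fundamental domain for the action of the reflection group $W_X^{\nod}$ on a Lorentzian lattice, which I would cite rather than reprove. The equality $D^2 = D'^2$ is then automatic, as Weyl elements preserve the intersection form. The main obstacle, to my mind, is verifying cleanly that the reflection procedure keeps $D_{i-1}$ inside the correct component of the positive cone so that the Hodge index step applies; everything else, including termination, follows formally from that invariance together with the Enriques-surface hypothesis $K_X \equiv 0$.
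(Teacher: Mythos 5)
The paper does not prove this lemma; it is quoted verbatim from Cossec--Dolgachev (Theorem~3.2.1) resp.\ Cossec--Dolgachev--Liedtke (Theorem~2.3.3), and the argument given there is exactly the reflection procedure you describe. Your proof is correct: the invariance of the positive-cone component under the reflections $s_{R_j}$, the Hodge-index/adjunction step forcing $C^2=-2$ (using $K_X\equiv 0$), and termination via the strictly decreasing positive integers $D_i\cdot h$ are all sound, with only the trivial remark that $D_i\neq 0$ is needed to conclude $D_i\cdot h>0$ when $D^2=0$.
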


\begin{lemma}[{\cite[Corollary~2.2.9]{CossecDolgachevLiedtke}}] \label{lem: half-fiber}
An effective divisor \(D\) on~\(X\) is a half-fiber of a genus one fibration on~\(X\) if and only if \(D\) is nef, primitive, and \(D^2 = 0\).
\end{lemma}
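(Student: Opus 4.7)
The forward implication is essentially a translation of the definition. If $D$ is a half-fiber of a genus one fibration $\pi\colon X \to \mathbb{P}^1$, then $2D$ is a scheme-theoretic fiber of $\pi$. Fibers of a surjective morphism to a smooth curve are nef (distinct fibers are disjoint and every horizontal curve meets them positively), so $D$ itself is nef and $D^2 = \tfrac14(2D)^2 = 0$. The primitivity of $[D]$ in $\mathrm{Num}(X)$ comes from the observation that $\pi^{*}\mathrm{Num}(\mathbb{P}^1)$ is a rank-one sublattice of $\mathrm{Num}(X)$, and in the presence of the multiple fiber $2D$ its generator is precisely $[D]$.

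For the backward implication, the plan is to show that $|2D|$ defines a genus one fibration of which $D$ is a half-fiber. Riemann--Roch on $X$, combined with $D^2 = 0$ and $D \cdot K_X = 0$ (the latter since $K_X$ is numerically trivial on an Enriques surface), gives
\[
    \chi(\mathcal{O}_X(2D)) \;=\; 1 + \tfrac12 \cdot 2D \cdot (2D - K_X) \;=\; 1.
\]
Serre duality together with the nefness of $D$ yields $h^2(2D) = h^0(K_X - 2D) = 0$: for any ample $H$ one has $(K_X - 2D) \cdot H = -2 D \cdot H < 0$, since $D \not\equiv 0$ by primitivity. Hence $h^0(2D) \geq 1$. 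The crucial analytic step is to upgrade this to $h^0(2D) \geq 2$, which I would do by producing a second effective representative via the class $2D + K_X$ (whose Euler characteristic is likewise $1$ and whose $h^2$ vanishes by the same argument) and then noting that $[2D] = [2D + K_X]$ in $\mathrm{Num}(X)$, so the two are numerically equivalent and together span a genuine pencil.

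Once $|2D|$ is a pencil, adjunction gives $p_a(2D) = 1 + \tfrac12(2D)(2D + K_X) = 1$, so its general member is a curve of arithmetic genus one. Using \autoref{lem: reducibility} together with the primitivity and nefness of $D$, one verifies that a general member of $|2D|$ is connected and irreducible: any decomposition as a nef summand plus $(-2)$-curves would force either a proper divisor of $[D]$ in $\mathrm{Num}(X)$ or an intersection contradicting $D^2 = 0$. The resulting morphism $X \to \mathbb{P}^1$ is a genus one fibration whose generic fiber has class $2D$, and primitivity of $D$ forces $D$ itself to be a half-fiber rather than a full fiber.

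I expect the main obstacle to be precisely the upgrade from $h^0(2D) \geq 1$ to $h^0(2D) \geq 2$ and the subsequent control of the base locus and reducibility of the pencil $|2D|$. Here the torsion character of $K_X$ is essential, and in characteristic $p = 2$ one must further accommodate the possibility of wild fibers without disturbing the purely numerical conclusion of the lemma.
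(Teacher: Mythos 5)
The paper offers no proof of this lemma at all --- it is quoted directly from \cite[Corollary~2.2.9]{CossecDolgachevLiedtke} --- so your argument can only be judged on its own merits. The forward direction is essentially fine, except that primitivity is asserted rather than proved: saying that the saturation of \(\pi^*\operatorname{Num}(\mathbb{P}^1)\) is generated by \([D]\) ``in the presence of the multiple fiber \(2D\)'' is precisely the claim at issue; it needs the facts that multiple fibers of genus one fibrations on Enriques surfaces have multiplicity exactly \(2\) and that the half-fiber class is not further divisible (via unimodularity of \(E_{10}\) or the canonical bundle formula), and you supply neither.

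The genuine gap is in the step you yourself flag as crucial. From \(\chi(2D)=1\) and \(h^2(2D)=0\) you correctly get \(h^0(2D)\ge 1\), and likewise \(h^0(2D+K_X)\ge 1\); but numerical equivalence of \(2D\) and \(2D+K_X\) does \emph{not} allow you to combine these two sections into a pencil. Only linearly equivalent effective divisors lie in the same linear system, and on a classical Enriques surface \(K_X\ne 0\) in \(\operatorname{Pic}(X)\), so \(|2D|\) and \(|2D+K_X|\) are genuinely distinct linear systems, each a priori a single point. The same fallacy would ``prove'' \(h^0(F)\ge 2\) for a half-fiber \(F\) and its companion \(F'=F+K_X\), which is false: \(h^0(F)=h^0(F')=1\). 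The standard repair --- and the route taken in \cite{CossecDolgachevLiedtke} --- is to first reduce, using \autoref{lem: reducibility}, primitivity, and the hyperbolicity of \(\operatorname{Num}(X)\), to the case where \(D\) is an indecomposable divisor of canonical type in the sense of Bombieri--Mumford, and then to analyze the degree-zero torsion sheaf \(\mathcal{O}_D(D)\) (equivalently \(\omega_D\)) on the arithmetic-genus-one curve \(D\) via the sequence \(0\to\mathcal{O}_X(D)\to\mathcal{O}_X(2D)\to\mathcal{O}_D(2D)\to 0\), concluding that one of \(\mathcal{O}_D(D)\), \(\mathcal{O}_D(D+K_X)\) is trivial and hence that \(h^0(2D)\ge 2\). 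Without this input, the remainder of your argument (freeness of the pencil, irreducibility of the general member, identification of \(2D\) as a multiple fiber of multiplicity exactly \(2\)) has nothing to stand on.
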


A \emph{\(c\)-degenerate (canonical isotropic) \(n\)-sequence on~\(X\)} is an \(n\)-tuple of the form
\[
    \big(F_1,F_1 + R_{1,1},\hdots,F_1 + \sum_{j=1}^{r_1} R_{1,j},F_2,\ldots,F_2 + \sum_{j = 1}^{r_2}R_{2,j},\hdots, F_c, \ldots, F_{c} + \sum_{j=1}^{r_c} R_{c,j}\big),
\]
where the \(F_i\) are half-fibers of genus one fibrations on~\(X\) and the \(R_{i,j}\) are \((-2)\)-curves satisfying the following conditions:
\begin{enumerate}
    \item \(F_i.F_j = 1 - \delta_{ij}\).
    \item \(R_{i,j}.R_{i,j+1} = 1\).
    \item \(R_{i,j}.R_{k,l} = 0\) unless \((k,l) = (i,j)\) or~\((k,l)=(i,j\pm 1)\).
    \item \(F_i.R_{i,1} = 1\) and \(F_i.R_{k,l} = 0\) if \((k,l) \neq (i,1)\).
\end{enumerate}
If \(c = n\), we simply call the above sequence a \emph{\(c\)-sequence}.
We say that a \(c\)-degenerate \(n\)-sequence \emph{extends} to a \(c'\)-degenerate \(n'\)-sequence if the former is contained in the latter, disregarding the ordering.

\begin{theorem}[{\cite[Lemma~1.6.1, Theorem~3.3]{Cossec:Picard_group}}]  \label{thm: 10sequence}
If \(n \neq 9\), then every \(c\)-degenerate \(n\)-sequence on~\(X\) can be extended to a \(c'\)-degenerate \(10\)-sequence for some \(c' \geq c\).
\end{theorem}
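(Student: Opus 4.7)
The plan is to split the argument into a purely lattice-theoretic extension step, followed by a geometric realization step. Since $\mathrm{Num}(X)$ is isomorphic to the Enriques lattice $E_{10}$, the given $c$-degenerate $n$-sequence produces $n$ isotropic classes $f_1,\ldots,f_n \in \mathrm{Num}(X)$ satisfying $f_k.f_l = 1-\delta_{kl}$, each of the form $f_k = F_i + \sum_{j} R_{i,j}$ with $F_i$ a half-fiber and the $R_{i,j}$ an attached chain of $(-2)$-curves.

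First, I would prove the lattice-theoretic statement that, for $n \neq 9$, every isotropic $n$-sequence in $E_{10}$ extends to an isotropic $10$-sequence. One proceeds by descending induction on $10-n$. For the inductive step, one analyzes the orthogonal complement $L_n$ of the span of $f_1,\ldots,f_n$ inside $E_{10}$ and searches for a primitive isotropic vector $f_{n+1}$ with $f_{n+1}.f_i = 1$ for all $i \leq n$; a direct computation of the discriminant form of $L_n$ for $n \leq 8$ exhibits such a vector. The excluded case $n=9$ is Coble's phenomenon: the unique numerical candidate for $f_{10}$ is of the form $\tfrac{1}{3}(3D - f_1 - \cdots - f_9)$, which need not be integral in $\mathrm{Num}(X)$.

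Next, for each new class $f_i$ with $n < i \leq 10$ produced above, I would apply \autoref{lem: reducibility} to write $f_i \sim f_i' + \sum_j a_{i,j} R_{i,j}$ with $f_i'$ nef and the $R_{i,j}$ effective $(-2)$-curves. Since $f_i$ is primitive isotropic and $W_X^{\nod}$ acts by isometries, $f_i'$ is also primitive isotropic; by \autoref{lem: half-fiber}, $f_i'$ is then a half-fiber of a genus one fibration. This supplies the new half-fibers $F_{c+1},\ldots,F_{c'}$ for the desired extension.

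The main obstacle will be to organize the $(-2)$-curves produced in the reduction into chains $R_{i,1},\ldots,R_{i,r_i}$ satisfying axioms (2)--(4) of a degenerate sequence and compatible with the chains already attached to $f_1,\ldots,f_n$. This requires ordering the sequence of reflections taking $f_i$ to $f_i'$ so that each intermediate class remains effective and each subtracted root attaches linearly to the growing chain, without disturbing the orthogonalities to the previously selected half-fibers. This combinatorial bookkeeping, relying ultimately on the tree structure of the nodal chambers in $W_X^{\nod}$, is the technical heart of Cossec's original argument in \cite{Cossec:Picard_group}.
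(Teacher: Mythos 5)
The paper does not actually prove this statement: it is imported verbatim from Cossec's work (Lemma~1.6.1 and Theorem~3.3 of \emph{On the Picard group of Enriques surfaces}), and your two-step plan --- first extend the isotropic sequence lattice-theoretically in \(E_{10}\), then reduce the new classes to nef representatives via \(W_X^{\nod}\) using \autoref{lem: reducibility} and \autoref{lem: half-fiber} --- is indeed the structure that the two cited results encode. The first step of your sketch is sound in outline (including the correct identification of the \(n=9\) obstruction), and the observation that the members of a \(c\)-degenerate \(n\)-sequence form an isotropic \(n\)-sequence is easily checked.

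The gap is in the second step, and it is not mere bookkeeping. Reducing each new class \(f_i \sim f_i' + C_i\) \emph{independently} by \autoref{lem: reducibility} does not obviously produce a degenerate sequence: from \(1 = f_i.f_k = f_i'.f_k' + f_i'.C_k + C_i.f_k' + C_i.C_k\) one cannot conclude \(f_i'.f_k' \in \{0,1\}\), because \(C_i.C_k\) may be negative; the effective parts \(C_i\) need not decompose into chains satisfying axioms (2)--(4); and, most importantly, the statement requires the \emph{given} sequence to be literally contained in the extension, so the Weyl-group reduction must be carried out so as to fix the first \(n\) members, which are already in canonical form --- a global reduction of the full isotropic \(10\)-sequence would in general replace them rather than extend them. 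This simultaneous, order-sensitive reduction is precisely the content of Cossec's Theorem~3.3, and your proposal names it as ``the main obstacle'' and ``the technical heart'' without supplying an argument. As it stands, the proposal is a correct road map for the cited proof, but not a proof.
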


The question whether \(c\)-sequences extend to \(c'\)-sequences with \(c' > c\) is the question treated in~\cite{extra-special} and in the present article. 

Numerical equivalence is denoted by \(\equiv\), and the numerical lattice of~\(X\) is denoted by \(\Num(X)\). Recall that \(\Num(X)\) is isomorphic to~\(E_{10}\), the unique even, unimodular, hyperbolic lattice of rank~\(10\).
By \cite[§1]{Cossec:Picard_group}, the classes of the components of a \(10\)-sequence generate a sublattice of index~\(3\) in~\(\Num(X)\). To generate the whole numerical lattice, one needs the classes~\(e_{i,j}\) described in the following lemma.

\begin{lemma}[{\cite[Lemma 1.6.2]{Cossec:Picard_group}}] \label{lem:Cossec's.lemma}
If \((f_1,\ldots,f_{10})\) is a \(10\)-tuple of vectors in \(E_{10}\) such that \(f_i.f_j = 1 - \delta_{ij}\), then for each \(i,j\) with \(1\leq i \neq j \leq 10\), there exists an isotropic vector \(e_{i,j} \in E_{10}\) such that \(e_{i,j}.f_i = e_{i,j}.f_j = 2\) and \(e_{i,j}.f_k = 1\)  for \(k \neq i,j\).
\end{lemma}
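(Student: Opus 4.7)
The plan is to reduce the lemma to the single statement that the class
\[
\Delta \coloneqq \tfrac{1}{3}\sum_{k=1}^{10} f_k \in E_{10} \otimes \mathbb{Q}
\]
is actually integral, i.e.\ lies in \(E_{10}\). Once this is granted, one simply sets \(e_{i,j} \coloneqq \Delta - f_i - f_j\) and checks mechanically, using \(\Delta.f_k = 3\) and \(\Delta^2 = 10\), that
\[
e_{i,j}.f_k = 3 - (1-\delta_{ik}) - (1-\delta_{jk}) = 1 + \delta_{ik} + \delta_{jk}
\]
and that \(e_{i,j}^2 = 10 - 12 + 2 = 0\). In particular \(e_{i,j}\) is isotropic with the prescribed intersection numbers, so the entire content of the lemma is the integrality of \(\Delta\).

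The argument for \(\Delta \in E_{10}\) is purely lattice-theoretic. Let \(L \subseteq E_{10}\) be the rank-\(10\) sublattice spanned by \(f_1,\ldots,f_{10}\); its Gram matrix is \(J - I\), where \(J\) is the \(10\times 10\) all-ones matrix. Since \(J - I\) has eigenvalues \(9\) (simple) and \(-1\) (nine-fold), a direct computation gives \(\det(J-I) = -9\), so \(|L^*/L| = 9\). The next step is to show that \(L^*/L\) is in fact \emph{cyclic}: exploiting the identity \((J-I)^{-1} = \tfrac{1}{9}J - I\), every \(9 \times 9\) minor of \(J - I\) is seen to be \(\pm 1\) or \(\pm 8\), so the \(9\)-th determinantal divisor is \(1\). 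Hence the Smith normal form of \(J - I\) is \((1,\ldots,1,9)\), and \(L^*/L \cong \mathbb{Z}/9\mathbb{Z}\).

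Because \(E_{10}\) is unimodular, the chain \(L \subseteq E_{10} \subseteq L^*\) realises \(E_{10}/L\) as a subgroup of \(L^*/L\) of order \(3\), and by cyclicity this subgroup is \emph{unique}. On the other hand, the class of \(\Delta\) in \(L^*/L\) has order exactly \(3\): indeed \(3\Delta = \sum_k f_k \in L\), whereas \(\Delta \notin L\), since writing \(\Delta = \sum_k c_k f_k\) with \(c_k \in \mathbb{Z}\) would force \(C - c_j = 3\) for all \(j\) (with \(C = \sum_k c_k\)) and hence \(c_j = 1/3\). Therefore the class of \(\Delta\) generates the unique order-\(3\) subgroup \(E_{10}/L \subseteq L^*/L\), so \(\Delta \in E_{10}\), completing the proof.

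The main obstacle in this strategy is the cyclicity of \(L^*/L\): had one instead found \(L^*/L \cong (\mathbb{Z}/3\mathbb{Z})^2\), several order-\(3\) subgroups would coexist and one would have to argue separately that the one generated by \(\Delta\) is the subgroup corresponding to the unimodular overlattice \(E_{10}\). The minor computation above bypasses precisely this ambiguity.
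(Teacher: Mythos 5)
Your proof is correct. Note that the paper does not prove this statement at all: it is quoted verbatim from Cossec's \emph{On the Picard group of Enriques surfaces} (Lemma~1.6.2), so there is no in-paper argument to compare against. Your reduction to the integrality of \(\Delta = \tfrac13\sum f_k\) is exactly the right move, and all the numerical checks (\(\Delta.f_k=3\), \(\Delta^2=10\), hence \(e_{i,j}^2=0\) and \(e_{i,j}.f_k=1+\delta_{ik}+\delta_{jk}\)) are accurate. The lattice-theoretic core is also sound: \(\det(J-I)=-9\) gives \(|L^*/L|=9\) and \([E_{10}:L]=3\), the adjugate \(9I-J\) shows every \(9\times 9\) minor is \(\pm 1\) or \(\pm 8\), so the Smith normal form is \((1,\dots,1,9)\) and \(L^*/L\cong\mathbb{Z}/9\mathbb{Z}\); uniqueness of the order-\(3\) subgroup then forces \(\Delta\in E_{10}\). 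Incidentally, the cyclicity can be obtained a touch more directly without minors: the vector \(\tfrac19\sum f_k\) lies in \(L^*\) (it pairs to \(1\) with each \(f_j\)) and the same integrality computation you ran for \(\Delta\) shows it has order exactly \(9\) in \(L^*/L\), so it generates. Either way, your argument is a complete, self-contained proof of the cited lemma, and it is consistent with how the paper later uses \(\Delta\) (e.g.\ the divisibility statements in its Corollary on \(v.\sum f_i\)).
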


In particular, the following corollary shows that the intersection behavior of a divisor on~\(X\) with the components of a \(10\)-sequence cannot be arbitrary.
\begin{corollary} \label{cor: divisibility}
If \((f_1,\ldots,f_{10})\) is a \(10\)-tuple of vectors in \(E_{10}\) such that \(f_i.f_j = 1 - \delta_{ij}\), then for each vector \(v \in E_{10}\), the intersection number \(v. \sum f_i\) is divisible by \(3\) and if \(v \in \langle f_1,\ldots,f_{10}\rangle\), then \(v. \sum f_i\) is divisible by \(9\).
\end{corollary}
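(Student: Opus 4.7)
The plan is to reduce everything to the computation of $f_i \cdot f$, where $f := \sum_{j=1}^{10} f_j$, and then use the index-$3$ fact cited just above the corollary.

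First, for each $i$ compute
\[
    f_i \cdot f \;=\; f_i^2 + \sum_{j \neq i} f_i \cdot f_j \;=\; 0 + 9 \;=\; 9.
\]
Hence for any $v \in L := \langle f_1, \ldots, f_{10} \rangle$, writing $v = \sum a_i f_i$ gives $v \cdot f = 9 \sum a_i \in 9 \mathbb{Z}$. This already proves the second, stronger divisibility claim for vectors in $L$.

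For the first claim, I would invoke the result of \cite[§1]{Cossec:Picard_group} recalled in the paragraph preceding \autoref{lem:Cossec's.lemma}, which asserts that $L$ has index $3$ in $E_{10}$. In particular, $3v \in L$ for every $v \in E_{10}$, so by the previous paragraph $3 v \cdot f \in 9\mathbb{Z}$, which is equivalent to $v \cdot f \in 3\mathbb{Z}$. Should one prefer a self-contained argument, the index-$3$ fact also falls out of a direct Gram determinant calculation: the Gram matrix of $(f_1, \ldots, f_{10})$ is $J_{10} - I_{10}$, whose eigenvalues are $9$ (simple, eigenvector $(1, \ldots, 1)$) and $-1$ (multiplicity $9$), so its determinant is $-9$; since $E_{10}$ is unimodular, $[E_{10}:L]^2 = 9$.

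There is really no obstacle to overcome: once one notices that $f_i \cdot f = 9$ for every $i$, the two divisibility statements reduce to the respective observations $v \in L$ and $3v \in L$. One could also cross-check using \autoref{lem:Cossec's.lemma}: each $e_{i,j}$ satisfies $e_{i,j} \cdot f = 2 + 2 + 8 \cdot 1 = 12$, which is indeed divisible by $3$ but not by $9$, consistent with $e_{i,j} \in E_{10} \setminus L$ and confirming that the divisibility by $3$ in $E_{10}$ cannot in general be improved to divisibility by $9$.
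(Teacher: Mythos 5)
Your proof is correct, and it rests on the same two pillars as the paper's: the computation $f_i\cdot\sum f_j = 9$ and the index-$3$ fact from \cite[§1]{Cossec:Picard_group}. The only genuine difference is in how you pass from $L=\langle f_1,\ldots,f_{10}\rangle$ to all of $E_{10}$: the paper observes that $\sum f_i\cdot e_{j,k}=12$ is not divisible by $9$, concludes that $e_{j,k}\notin L$, and therefore (since the index is the prime $3$) that $L$ and $e_{j,k}$ together generate $E_{10}$, from which the mod-$3$ statement follows by writing an arbitrary $v$ as an element of $L$ plus a multiple of $e_{j,k}$. You instead use the purely group-theoretic fact that a quotient of order $3$ is killed by $3$, so $3v\in L$ and $3(v\cdot\sum f_i)\in 9\mathbb{Z}$; this bypasses \autoref{lem:Cossec's.lemma} entirely and is marginally cleaner. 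Your Gram-determinant computation ($\det(J_{10}-I_{10})=-9$, hence $[E_{10}:L]^2=9$ by unimodularity) also makes the cited index-$3$ fact self-contained, and your check that $e_{i,j}\cdot\sum f_k=12$ shows the divisibility by $3$ is sharp — which is exactly the role these vectors play in the paper's own argument.
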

\begin{proof}
The lattice \(L = \langle f_1,\ldots,f_{10}\rangle\) has index \(3\) in \(E_{10}\) by \cite[§1]{Cossec:Picard_group}. For every element \(v \in L\), we have \(9 \mid (\sum f_i.v)\), since this holds for \(v = f_i\). For any of the \(e_{j,k}\) in \autoref{lem:Cossec's.lemma}, we have \(9 \nmid (\sum f_i.e_{j,k}) = 12\). Hence, together with the \(f_i\), any such \(e_{j,k}\) generates \(E_{10}\). Since \(3 \mid (\sum f_i.e_{j,k})\), we obtain the claim.
\end{proof}

A \(1\)-sequence is nothing but a half-fiber of a genus one fibration. We recall some basic results on genus one fibrations on Enriques surfaces. We will use Kodaira's notation for singular fibers. Singular fibers of type~\(\I_n\) are called of \emph{multiplicative} type, while all others are called of \emph{additive} type.
To state the result, we recall that, if \(p = 2\), there are three different types of Enriques surfaces, distinguished by the torsion component \(\Pic^\tau_X\) of the identity of their Picard scheme: \emph{classical} (\(\Pic^{\tau}_X(k) = \mathbb{Z}/2\mathbb{Z} = \langle \omega_X \rangle \)), \emph{ordinary} (\(\Pic^{\tau}_X = \mu_2\)), and \emph{supersingular} (\(\Pic^{\tau}_X = \alpha_2\)).

\begin{lemma}[{\cite[Theorem~4.10.3]{CossecDolgachevLiedtke}}] \label{lem: genus.1.fibrations}
Let \(f\colon X \rightarrow \IP^1\) be a genus one fibration on~\(X\).
\begin{itemize}
    \item If \(p \neq 2\), then \(f\) is an elliptic fibration with two half-fibers, and each of them is either non-singular, or singular of multiplicative type.
    \item If \(p = 2\) and \(X\) is classical, then \(f\) is an elliptic or quasi-elliptic fibration with two half-fibers, and each of them is either an ordinary elliptic curve, or singular of additive type.
    \item If \(p = 2\) and \(X\) is ordinary, then \(f\) is an elliptic fibration with one half-fiber, which is either an ordinary elliptic curve, or singular of multiplicative type.
    \item If \(p = 2\) and \(X\) is supersingular, then \(f\) is an elliptic or quasi-elliptic fibration with one half-fiber, which is either a supersingular elliptic curve, or singular of additive type. 
\end{itemize}
\end{lemma}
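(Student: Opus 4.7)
The plan is to apply the canonical bundle formula for genus one fibrations to \(f\colon X \to \IP^1\) and combine it with the structure of \(\Pic^{\tau}_X\) and the theory of wild fibers. The formula reads
\[
K_X \sim f^*(K_{\IP^1}) + f^*L + \sum_{i} a_i F_{i,\mathrm{red}},
\]
where \(L = (R^1 f_*\mathcal{O}_X)^{\vee}\) has \(\deg L = \chi(\mathcal{O}_X) = 1\), the sum runs over the multiple fibers \(m_i F_{i,\mathrm{red}}\), and \(a_i = m_i - 1\) in the tame case (and \(a_i \geq m_i - 1\) in the wild case, by Bombieri--Mumford). Taking numerical classes and using \(K_X \equiv 0\) yields the constraint \(\sum_i a_i / m_i = 1\).

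In characteristic \(p \neq 2\), every multiple fiber is tame (since \(m_i \in \{1,2\}\) and \(p \nmid 2\)), so \(a_i = m_i - 1\) and the numerical identity forces exactly two multiple fibers of multiplicity two, which are the two half-fibers. Additive reduction is then ruled out because the restriction of \(\mathcal{O}_X(F_i)\) to the smooth locus \(F_i^{\mathrm{sm}}\) is a nontrivial 2-torsion line bundle, whereas the identity component of \(\Pic(F_i^{\mathrm{sm}})\) in the additive case is \(\mathbb{G}_a\), which has no nontrivial 2-torsion in characteristic different from~\(2\). Hence only smooth elliptic and multiplicative types survive.

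In characteristic~\(2\) the three subtypes of Enriques surfaces are distinguished by the group scheme \(\Pic^{\tau}_X\). The key point, extracted from the Leray spectral sequence of \(f\) and Bombieri--Mumford's analysis of wild fibers, is that the group scheme attached to the identity component of the smooth locus of each multiple fiber (\(\mathbb{Z}/2\mathbb{Z}\) for ordinary smooth elliptic, \(\mu_2\) for multiplicative, \(\mathbb{G}_a\) for additive, \(\alpha_2\) for supersingular elliptic) must be compatible with \(\Pic^{\tau}_X\) via the induced 2-torsion obstruction. This matching dictates the case distinction: classical (\(\Pic^{\tau}_X = \mathbb{Z}/2\mathbb{Z}\) étale) admits two half-fibers, each ordinary elliptic or additive; ordinary (\(\Pic^{\tau}_X = \mu_2\)) admits a single wild half-fiber, ordinary elliptic or multiplicative; supersingular (\(\Pic^{\tau}_X = \alpha_2\)) admits a single wild half-fiber, supersingular elliptic or additive. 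Quasi-elliptic fibrations occur precisely when every fiber is additive, which is compatible only with the classical and supersingular types.

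The main obstacle is the characteristic~\(2\) analysis, where one must replace the simple numerical count by the Bombieri--Mumford dictionary between \(\Pic^{\tau}_X\) and the group scheme structure of the wild multiple fibers, and where one has to control the jump from two to one half-fibers caused by wildness. The characteristic \(\neq 2\) case is essentially immediate from the canonical bundle formula and the group-theoretic exclusion of additive reduction, so the bulk of the work lies in the small-characteristic technicalities.
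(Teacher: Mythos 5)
The paper does not prove this lemma: it is quoted verbatim from \cite[Theorem~4.10.3]{CossecDolgachevLiedtke}, so there is no in-paper argument to compare against and your proposal has to stand on its own as a reconstruction of the standard proof. The characteristic \(\neq 2\) half is essentially correct: tameness gives \(a_i = m_i - 1\), the numerical identity forces exactly two half-fibers (granted the standard fact that multiple fibers have multiplicity \(2\), which you assume rather than derive), and the order-two normal bundle \(\mathcal{O}_{F}(F)\) excludes additive reduction since \(\mathbb{G}_a\) has no nontrivial \(2\)-torsion away from characteristic \(2\); this also disposes of quasi-elliptic fibrations in characteristic \(3\).

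The characteristic \(2\) half contains a concrete error and a genuine gap. The error: you state \(\deg L = \chi(\mathcal{O}_X) = 1\) and \(a_i \geq m_i - 1\) for wild fibers. Bombieri--Mumford give \(\deg L = \chi(\mathcal{O}_X) + \operatorname{length}(T)\), where \(T\) is the torsion of \(R^1 f_* \mathcal{O}_X\) (supported exactly at the wild fibers), and \(0 \leq a_i \leq m_i - 1\), with equality guaranteed only for tame fibers. As literally written, your constraint \(\sum_i a_i/m_i = 1\) would force two half-fibers on \emph{every} Enriques surface, contradicting the single wild half-fiber you need in the ordinary and supersingular cases; the correct count is \(\sum_i a_i/m_i = 1 - \operatorname{length}(T)\), which in those cases yields one wild half-fiber with \(a = 0\). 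The gap: the ``matching'' between \(\Pic^\tau_X\) and the type of the half-fiber is asserted, not proved. The actual mechanism is, for classical surfaces, that \(\mathcal{O}_F(F)\) is a nontrivial \emph{étale} \(2\)-torsion class in \(\Pic(F)\), which exists when \(F\) is ordinary elliptic or additive but not when \(F\) is multiplicative or supersingular in characteristic \(2\) (there the \(2\)-torsion of \(\mathbb{G}_m\), respectively of a supersingular curve, is infinitesimal); and, for non-classical surfaces, a comparison via \(R^1 f_*\) of the Frobenius action on \(H^1(X,\mathcal{O}_X) \cong k\) with that on \(H^1(F,\mathcal{O}_F)\), bijective in the ordinary case and zero in the supersingular case, which is what selects ordinary-elliptic/multiplicative versus supersingular/additive. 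Without these two steps the case distinction in characteristic \(2\) is not established.
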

In particular, every genus one fibration \(f \colon X \rightarrow \IP^1\) admits a half-fiber \(F\), so it is of the form~\(|2F|\), and \((F)\) is a \(1\)-sequence.
Next, we give a bound on the number of vertical components of genus one fibrations. Recall that a genus one fibration~\(|2F|\) is called \emph{special}, if there exists a \((-2)\)-curve \(R\), called \emph{special bisection} of~\(|2F|\), with \(F.R = 1\).
\begin{proposition} \label{lem: combinatorial.0}  \label{lem: combinatorial.two.reducible.half-fibers} \label{lem: combinatorialonefibration}
If \(|2F|\) is a genus one fibration on an Enriques surface, then the following hold:
\begin{enumerate}
    \item The number of irreducible curves contained in \(s\) fibers of~\(|2F|\) is at most \(8 +s\).
    \item The sum of the root lattices associated to fibers of~\(|2F|\) embeds into \(E_8\).
\end{enumerate}
\end{proposition}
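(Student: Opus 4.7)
Both assertions follow from a single lattice-theoretic observation inside $\Num(X)\cong E_{10}$. Since $F$ is primitive isotropic by \autoref{lem: half-fiber}, and since all primitive isotropic vectors of $E_{10} = U\oplus E_8$ form a single orbit under the isometry group, one has the identification
\[
    F^{\perp}/\langle F\rangle \;\cong\; E_8.
\]

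The next step is to exploit that every irreducible component $C$ of a fiber of $|2F|$ satisfies $C.F = 0$ and therefore represents a class in $F^{\perp}$, while components belonging to two different fibers are disjoint and hence orthogonal. For a single fiber with components $C_1,\ldots,C_m$ of multiplicities $n_1,\ldots,n_m$, the relation $\sum_j n_j C_j \equiv F$ or $2F$ (according to whether the fiber is a half-fiber or a full fiber) implies that the classes of $C_1,\ldots,C_{m-1}$ become linearly independent in the quotient $F^{\perp}/\langle F\rangle$. By Kodaira's classification of fibers of genus one fibrations, they generate a finite root sublattice of the appropriate ADE type, which is precisely the root lattice $R_i$ associated to that fiber in the statement.

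Combining over all reducible fibers of $|2F|$, the orthogonal direct sum $\bigoplus_i R_i$ embeds into $E_8$, which is exactly statement (2). Taking ranks yields $\sum_i(m_i - 1) \leq 8$, where the sum runs over the reducible fibers. For any choice of $s$ fibers, restricting this inequality to those among them which are reducible and then adding back the contributions $m_i = 1$ from any irreducible fibers in the selection gives $\sum_{i=1}^s m_i \leq 8 + s$, which is statement (1).

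The argument is essentially formal Shioda--Tate-style bookkeeping adapted to the Enriques setting, so I do not foresee a real obstacle. The only subtlety is making sure that combining components from different reducible fibers does not introduce hidden linear relations; this is resolved by the orthogonality of components from distinct fibers together with the fact that each fiber contributes only the single relation $\sum_j n_j C_j \equiv F$ or $2F$.
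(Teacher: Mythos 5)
Your argument is correct, but it takes a genuinely different route from the paper's. The paper deduces both claims by passing to the Jacobian fibration of \(|2F|\), which is a fibration on a \emph{rational} surface with the same fiber types (citing \cite[Proposition~4.10.1, Theorem~4.3.19]{CossecDolgachevLiedtke}), and then quotes the corresponding well-known facts for rational elliptic and quasi-elliptic surfaces. You instead run the Shioda--Tate bookkeeping directly on \(X\): since \(F\) is primitive isotropic in \(\Num(X)\cong U\oplus E_8\), the quotient \(F^\perp/\langle F\rangle\) is even, unimodular and definite of rank \(8\), hence isomorphic to \(E_8\) up to sign, and by Zariski's lemma each fiber contributes its finite root lattice to this quotient, with distinct fibers contributing orthogonally; injectivity of the combined embedding follows from definiteness (if \(\sum_i \tilde v_i=cF\) with \(\tilde v_i\) supported on distinct fibers, pairing with \(\tilde v_j\) gives \(\tilde v_j^2=0\), so each \(\tilde v_j\) lies in the radical of its fiber lattice). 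This buys self-containedness -- no appeal to the existence of the Jacobian and the preservation of fiber types, which is the delicate input in characteristic \(2\) -- at the cost of redoing by hand what the rational-surface theory packages. Two points are worth tightening. First, to see that the image of a single fiber is \emph{precisely} the root lattice \(R_i\), work with the image of the full span \(\langle C_1,\ldots,C_m\rangle\), which equals \(\langle C_1,\ldots,C_m\rangle/\mathrm{rad}\cong R_i\) because \(\gcd(n_j)=1\) forces the kernel of the projection to be exactly the radical; if you insist on deleting one component, delete a \emph{simple} one, since deleting a multiple component of, say, a fiber of type \(\I_0^*\) leaves only \(4A_1\subsetneq D_4\) (harmless for the rank count in (1), but not literally ``the root lattice associated to the fiber''). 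Second, in characteristic \(2\) the fibration may be quasi-elliptic, so the reference to Kodaira's classification should be replaced by Zariski's lemma together with the classification of degenerate fibers of genus one fibrations, which yields the same list of extended Dynkin types.
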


\begin{proof}
The Jacobian fibration of \(|2F|\) is a fibration of a rational surface by \cite[Proposition~4.10.1]{CossecDolgachevLiedtke}. Moreover, \(|2F|\) and its Jacobian have the same types of fibers by \cite[Theorem~4.3.19]{CossecDolgachevLiedtke}. Hence, both claims follow from the corresponding statements for rational elliptic and quasi-elliptic surfaces, which are well-known (see e.g. \cite{Oguiso.Shioda}).
\end{proof}

Next, we turn to \(2\)-sequences \((F_1,F_2)\). The linear system \(|2F_1 + 2F_2|\) associated to a \(2\)-sequence \((F_1,F_2)\) yields a morphism of degree \(2\) onto a quartic symmetroid del Pezzo surface in~\(\mathbb{P}^4\). Depending on whether \(X\) is classical or not, there are four or two half-fibers in a \(2\)-sequence and the following proposition restricts the possible combinations of half-fibers in a \(2\)-sequence.

\begin{proposition} \label{lem: F1.F2.no.common.components}  \label{prop: 2-sequence.E8} \label{lem: 2-seq.p!=2} \label{prop: G2exists}
If \((F_1,F_2)\) is a \(2\)-sequence, then the following hold:
\begin{enumerate}
    \item The half-fibers of \(|2F_1|\) and \(|2F_2|\) do not share components.
    \item The sum of the root lattices associated to the half-fibers of~\(|2F_1|\) and~\(|2F_2|\) embeds into~\(E_8\).
    \item There is a simple fiber \(G_2 \in |2F_2|\) containing all components of \(F_1\) except a simple one.
\end{enumerate}
\end{proposition}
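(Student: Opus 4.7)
The plan is to prove the three statements in sequence, with (1) as the technical core.

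For (1), I argue by contradiction. Let $S_1, \ldots, S_m$ be the common components, with multiplicities $a_i \geq 1$ in $F_1'$ and $b_i \geq 1$ in $F_2'$, and write $F_1' = \sum a_i S_i + A$, $F_2' = \sum b_i S_i + B$ with $A, B$ effective and sharing no components. Using $S_k \cdot F_i' = 0$ (which gives $A \cdot S_k = 2 a_k - \sum_{j \neq k} a_j\, S_k \cdot S_j$ and analogously for $B$), expanding $F_1' \cdot F_2' = 1$ reduces to the identity
\[
    A \cdot B \;=\; 1 - 2 \sum_i a_i b_i + \sum_{i<j}(a_i b_j + a_j b_i)\, S_i \cdot S_j.
\]
The right-hand side is non-increasing in each $a_i, b_i$ under the affine-Dynkin bounds $a_i \leq 2 a_j$ whenever $S_i \cdot S_j = 1$, so it is maximized at $a_i = b_i = 1$, giving $A \cdot B \leq 1 - 2m + 2 \sum_{i<j} S_i \cdot S_j$. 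If $A \neq 0$, the common sub-diagram is a proper sub-diagram of $F_1'$'s affine Dynkin; since the only cycles in an affine ADE arise in the full $\tilde{A}_n$ (and containing one would force $A = 0$), the common sub-diagram is a tree and $\sum_{i<j} S_i \cdot S_j \leq m - 1$. Hence $A \cdot B \leq -1$, contradicting $A \cdot B \geq 0$. The case $A = 0$ yields $F_1' \cdot F_2' = 0 \neq 1$ directly. I expect the main obstacle here to be the careful handling of the affine-Dynkin constraints on the multiplicities across both fibrations.

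For (2), given (1), the half-fibers share no components. Since $F_1' \cdot F_2 = 1$, exactly one component $R^*$ of $F_1'$ (of multiplicity one) satisfies $R^* \cdot F_2 = 1$, and every other component $R$ has $R \cdot F_1 = R \cdot F_2 = 0$ and hence lies in $\langle F_1, F_2 \rangle^\perp \cap \Num(X) \cong E_8$. These latter components span the root lattice $L_1$ attached to $F_1'$. Symmetrically, $L_2 \subset E_8$ arises from $F_2'$ with its distinguished component $S^*$. Since $F_1' \cdot F_2' = 1 = R^* \cdot S^*$ is realized entirely by the distinguished pair, the non-distinguished components of $F_1'$ and $F_2'$ are mutually orthogonal; combined with (1), this gives $L_1 \oplus L_2 \hookrightarrow E_8$.

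For (3), decompose $F_1 = R^* + V$ with $R^*$ as in (2) and $V$ the vertical part for $|2F_2|$. A direct computation gives $V^2 = -2$ and $R^* \cdot V = 2$. Splitting $V = V_1 + \cdots + V_k$ by the fiber of $|2F_2|$ containing each piece, connectedness of $F_1$ forces $R^* \cdot V_i \geq 1$, whence $k \leq 2$. I rule out $V_i^2 = 0$: it would force $V_i = c\, G^{(i)}$ as a divisor with $c \geq 1$ (using that every Kodaira type has a multiplicity-one component), whence $F_1 - R^* - G^{(i)}$ would be effective with intersection $-2$ against the nef class $F_1$, contradicting nefness. Hence $V_i^2 \leq -2$ strictly, and $\sum V_i^2 = V^2 = -2$ forces $k = 1$. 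Finally, by (1), no component of $V$ lies in a half-fiber of $|2F_2|$, so the fiber $G_2$ containing $V$ is simple.
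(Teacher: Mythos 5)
The core of your proposal is part (1), and that is where there is a genuine gap. Your identity \(A\cdot B = 1 - 2\sum_i a_ib_i + \sum_{i<j}(a_ib_j+a_jb_i)\,S_i\cdot S_j = 1 + a^{T}Mb\) (with \(M\) the Gram matrix of the common components) is correct, but the maximization step is not. Writing \(a^{T}Mb=\sum_k a_k(Mb)_k\), the affine relation for \(F_2'\) does give \((Mb)_k\le 0\), so you may replace \(a\) by \(\mathbf 1\); but the second reduction, from \(\mathbf 1^{T}Mb\) to \(\mathbf 1^{T}M\mathbf 1\), requires \((M\mathbf 1)_k=-2+\deg_{\Gamma_0}(S_k)\le 0\) for every common vertex, which fails as soon as the common subdiagram has a vertex of valency \(\ge 3\) (e.g. for \(\Gamma_0=D_4\) with \(b=(2,1,1,1)\) one has \(\mathbf 1^{T}Mb=-1>-2=\mathbf 1^{T}M\mathbf 1\)). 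In fact the target inequality \(A\cdot B\le -1\) cannot be the right form of contradiction: if one writes \(F_1'=\Sigma+A\), \(F_2'=\Sigma'+B\), then \(F_1'\cdot S_i=0\) for every common component gives \(1=F_1'\cdot F_2'=F_1'\cdot B=\Sigma\cdot B+A\cdot B\), and connectedness of \(F_2'\) forces \(\Sigma\cdot B\ge 1\); hence any putative counterexample automatically has \(A\cdot B=0\), \(\Sigma\cdot B=1\), and \(a^{T}Mb=\Sigma\cdot\Sigma'=-1\) exactly. So the impossibility must be extracted from the finer combinatorics of how \(A\) and \(B\) attach to the common part (each through a single simple component, with the attaching vertices simple in the other fibration), not from the sign of \(A\cdot B\). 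Note that the paper does not prove (1) either: it quotes it from the literature (Dolgachev--Martin), so you are attempting a statement the paper deliberately outsources, and the attempt as written does not close.

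Parts (2) and (3) are essentially sound, but both lean on (1). Your (2) matches the paper's argument (orthogonal complement of the hyperbolic plane \(\langle F_1,F_2\rangle\) is \(E_8\)), though you should say explicitly that the same reasoning covers all four half-fibers in the classical case and that the non-distinguished components of distinct half-fibers lie in distinct fibers, hence are orthogonal. Your (3) is correct and slightly more computational than the paper's: the paper simply observes that \(F_1-R^*\) is connected and orthogonal to \(F_2\), hence lies in a single fiber, whereas you reach \(k=1\) via \(\sum V_i^2=-2\) with each \(V_i^2\le -2\); both work, and your exclusion of \(V_i^2=0\) via nefness of \(F_1\) is a nice touch.
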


\begin{proof}
Claim (1) is {\cite[Lemma 3.5]{DolgachevMartin}}. 

If \(X\) is classical, let \(F_1'\) and \(F_2'\) be the second half-fibers of~\(|2F_1|\) and \(|2F_2|\), respectively. To prove Claim (2), note that the sum of the root lattices associated to \(F_1,F_1',F_2,\) and \(F_2'\) embeds into \(\langle F_1,F_2 \rangle^\perp \subseteq \Num(X)\). Since \(\Num(X) \cong U \oplus E_8\) and \(\langle F_1,F_2 \rangle \cong U\), the lattice \(\langle F_1,F_2 \rangle^\perp\) is even, unimodular, and positive definite, hence isomorphic to \(E_8\). 

Claim (3) is trivial if \(F_1\) is irreducible. Suppose \(F_1 = \sum a_iR_i\) for some \((-2)\)-curves \(R_i\) and \(a_i > 0\). For each \(i\), we have \(a_iR_i.F_2 \leq F_1.F_2 = 1\) and \(R_i.F_2 \geq 0\). Hence, \(R_i.F_2 \neq 0\) for exactly one \(i\) and for this \(i\) we have \(R_i.F_2 = 1\) and \(a_i = 1\). Since \(a_i = 1\), the \((-2)\)-curve \(R_i\) is a simple component of \(F_1\) and \(F_1 - R_i\) is a connected curve with \((F_1 - R_1).F_2 = 0\), so there is a fiber \(G_2\) of \(|2F_2|\) containing \(F_1 - R_i\). By Claim (1), the fiber \(G_2\) is simple.
\end{proof}

\section{Special \texorpdfstring{\(3\)}{3}-sequences} \label{sec: special.3-seq}

Let \((F_1,F_2,F_3)\) be a \(3\)-sequence on an Enriques surface \(X\). The associated linear system \(|F_1 + F_2 + F_3|\) induces a morphism \(\varphi\colon X \to \mathbb{P}^3\). 
We recall here the properties of this linear system, referring the reader to \cite[§3.5]{CossecDolgachevLiedtke} for further details. 
Restricting \(\varphi\) to a general member of the pencil \(|F_1 + F_2|\), one checks that \(\varphi\) is birational if and only if \(|F_1 + F_2 - F_3| = \emptyset\), in which case the image of~\(\varphi\) is a non-normal sextic \(S\) in~\(\mathbb{P}^3\). If \(X\) is classical, the non-normal locus of~\(S\) is a tetrahedron of lines if and only if \(|F_1 + F_2 - F_3 + K_X| = \emptyset\).
The divisors in \(|F_1 + F_2 - F_3|\) consist of~\((-2)\)-curves, so every \(3\)-sequence on a general Enriques surface (which does not contain \((-2)\)-curves) leads to a sextic model as above. 

In this section, we provide an in-depth analysis of the so-called \emph{special} \(3\)-sequences introduced by Cossec~(cf. \cite[Definition~5.3.1]{Cossec:Picard_group}), for which one of the above conditions is violated.

\begin{definition} \label{def: special.seq}
A \(3\)-sequence \((F_1, F_2, F_3)\) is said to be \emph{special} if \(F_1 + F_2 - F_3\) is numerically equivalent to an effective divisor, i.e. \(|F_1 + F_2 - F_3| \neq \emptyset\) or \(|F_1 + F_2 - F_3 + K_X| \neq \emptyset\).
\end{definition}
This definition is actually independent of the chosen order, as the following proposition shows.

\begin{proposition}  \label{prop: propertiesof3sequences}
If \((F_1,F_2,F_3)\) is a \(3\)-sequence, then the following hold:
\begin{enumerate}
    \item We have \(|F_1 + F_2 - F_3| \neq \emptyset\) if and only if \(|F_i + F_j - F_k| \neq \emptyset\) for every permutation \((i,j,k)\) of~\((1,2,3)\).
    \item If \(|F_1 + F_2 - F_3| \neq \emptyset\) and \(X\) is classical, then \(|F_1 + F_2 - F_3 + K_X| = \emptyset\).
\end{enumerate}
\end{proposition}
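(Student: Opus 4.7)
The plan is to work directly with an effective divisor $D \in |F_1+F_2-F_3|$ and to exploit the rigidity of its intersection numbers with the three half-fibers. A direct computation gives $D.F_1 = D.F_2 = 0$, $D.F_3 = 2$ and $D^2 = -2$. Since $F_1$ and $F_2$ are nef, every irreducible component $R$ of $D$ must satisfy $R.F_1 = R.F_2 = 0$, so $D$ is vertical for both fibrations $|2F_1|$ and $|2F_2|$.

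For (1), the central step is to show that $D$ is contained, componentwise, in a single fiber $G \in |2F_1|$. Decomposing $D = \sum_{\alpha} D^{(\alpha)}$ according to the fibers of $|2F_1|$ supporting its components, the identity $D.F_2 = 0$ together with $G^{(\alpha)}.F_2 = 2$ and the nefness of $F_2$ rules out any $D^{(\alpha)}$ that is a nonzero rational multiple of its fiber, so every nonzero $D^{(\alpha)}$ satisfies $(D^{(\alpha)})^2 \leq -2$ on the even lattice $\Num(X) \cong E_{10}$; from $\sum_{\alpha} (D^{(\alpha)})^2 = D^2 = -2$ one concludes that exactly one $D^{(\alpha)}$ is nonzero, supported on a single fiber $G$. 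To upgrade ``supported on $G$'' to the componentwise bound $D \leq G$, I will write $D = \alpha + k\delta$ in the affine root lattice attached to the components of $G$, with $\delta = G$ and $\alpha$ a finite root obtained by deleting from the affine Dynkin diagram a vertex of mark one. The equation $D.F_2 = \alpha.F_2 + 2k = 0$, combined with the bound $|\alpha.F_2| \leq \sum m_i (R_i.F_2) \leq G.F_2 = 2$, forces $k \in \{0,1\}$, and in either case the standard fact that the coefficients of a finite root are bounded by the marks of the highest root yields $D \leq G$. Consequently, $G - D$ is an effective divisor in the class $F_1 + F_3 - F_2$. Running the same argument with $|2F_2|$ in place of $|2F_1|$ produces an effective member of $|F_2 + F_3 - F_1|$, covering all permutations.

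For (2), I suppose for contradiction that $X$ is classical and that $|F_1+F_2-F_3|$ and $|F_1+F_2-F_3+K_X|$ contain effective divisors $D$ and $D'$ respectively. By (1) each of these is dominated by a fiber of $|2F_1|$, say $D \leq G$ and $D' \leq G'$. If $G \neq G'$, then $G$ and $G'$ are disjoint, giving $D.D' = 0$ geometrically; but the class computation $(F_1+F_2-F_3).(F_1+F_2-F_3+K_X) = -2$ gives a contradiction. If $G = G'$, then $D$ and $D'$ lie in the sublattice of $\Num(X)$ spanned by the components of $G$, and these components are linearly independent in $\Num(X)$, since any $\mathbb{Z}$-linear relation among them must be a rational multiple of the vector $\sum m_i R_i = G \sim 2F_1$, which is nonzero. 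Since $D - D' \equiv -K_X \equiv 0$ in $\Num(X)$, we obtain $D = D'$ as divisors, in contradiction with $D' - D \sim K_X \neq 0$ in $\Pic(X)$.

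The main technical obstacle is the componentwise bound $D \leq G$. An effective divisor with $D^2 = -2$ supported on a reducible fiber need not in general be dominated by the fiber; for example, $2G - \theta$ (with $\theta$ the highest root of the associated finite root system) is effective of self-intersection $-2$ but violates $D \leq G$. It is precisely the vanishing $D.F_2 = 0$ and the fiber degree $G.F_2 = 2$ that rule out such pathologies, by pinning the level $k$ of the real-root decomposition to $\{0,1\}$. Once $D \leq G$ is established, the rest of both parts reduces to straightforward intersection-theoretic bookkeeping.
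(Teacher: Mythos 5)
Your proof is correct, but it takes a genuinely different route from the paper. The paper's argument is cohomological: for (1) it restricts $\mathcal{O}_X(F_1+F_2)$ and $\mathcal{O}_X(2F_2)$ to the divisor $S_3\in|F_1+F_2-F_3|$, computes $h^0(\mathcal{O}_{S_3})=1$ and $h^1(\mathcal{O}_{S_3})=0$ from Riemann--Roch, and reads off the non-vanishing of $|F_2+F_3-F_1|$ from the long exact sequence; for (2) it twists by $K_X-F_3$ and invokes Serre duality. You instead argue purely numerically, showing that any $D\in|F_1+F_2-F_3|$ is vertical for $|2F_1|$ and $|2F_2|$, is supported on a single fiber $G$ by the negative semi-definiteness of the fiber lattice, and satisfies $D\leq G$ via the real-root decomposition $D=\alpha+k\delta$; then $G-D$ is the required effective divisor. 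This is self-contained and characteristic-free, and as a by-product it re-derives a good part of what the paper only establishes afterwards in \autoref{prop: propertiesofcharacteristiccycle} (that $S_k$ sits inside a single fiber with coefficients bounded by the marks). Your argument for (2) — disjointness of distinct fibers versus $(F_1+F_2-F_3)^2=-2$, and linear independence of the components of a single fiber in $\Num(X)$ forcing $D=D'$ against $K_X\neq 0$ — is also correct and arguably more elementary than the Serre-duality computation. What the paper's route buys is brevity and uniformity with the cohomological technique it reuses later (e.g.\ in \autoref{lem: criterion}).

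Two small points you should tighten. First, in the step ``$k\in\{0,1\}$, and in either case \dots{} yields $D\leq G$'': for $k=1$ the bound by the marks alone does not give $D\leq G$ when $\alpha$ is a \emph{positive} root (then $D=\delta+\alpha$ overshoots $\delta$); you need to add that $\alpha.F_2=-k\,(\delta.F_2)<0$ together with nefness of $F_2$ and non-negativity of the coefficients of a positive root forces $\alpha$ to be a negative root, whence $D=\delta-\beta\leq\delta\leq G$. Second, the normalization $D.F_2=\alpha.F_2+2k$ tacitly assumes $G$ is a simple fiber; if $G=2F_1'$ is a multiple fiber the null vector is the half-fiber $\delta=F_1'$ with $\delta.F_2=1$, so the equation reads $\alpha.F_2+k=0$. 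The same estimate $|\alpha.F_2|\leq\delta.F_2$ still gives $k\leq 1$ and $D\leq\delta\leq G$, so the conclusion is unaffected, but the case should be acknowledged.
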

\begin{proof}
For Claim (1), it suffices to show that \(|F_i + F_j - F_k| \neq \emptyset\) if \(|F_1 + F_2 - F_3| \neq \emptyset\). Let \(S_3 \in |F_1 + F_2 - F_3|\). Since \(S_3.(F_1 + F_2) = 0\) and the linear system \(|F_1 + F_2|\) is a pencil without fixed components by \cite[Proposition~2.6.1]{CossecDolgachevLiedtke}, we have \(\mathcal{O}_{S_3}(F_1 + F_2) \cong \mathcal{O}_{S_3}\). Now, consider the exact sequence
\begin{equation} \label{eq: firstexactsequence}
    0 \to \mathcal{O}_X(F_3) \to \mathcal{O}_X(F_1 + F_2) \to \mathcal{O}_{S_3} \to 0.
\end{equation}
The two sheaves on the left have $1$- resp. $2$-dimensional space of global sections, hence no higher cohomology by Riemann--Roch. Therefore, \(h^1(S,\mathcal{O}_{S_3}) = 0\) and \(h^0(S,\mathcal{O}_{S_3}) = 1\). Now, \(|2F_2|\) is a pencil without fixed components and \(S_3.F_2 = 0\), so we have the exact sequence
\[
    0 \to \mathcal{O}_X(F_2 + F_3 - F_1) \to \mathcal{O}_X(2F_2) \to \mathcal{O}_{S_3} \to 0.
\]
Since \(h^0(X,\mathcal{O}_X(2F_2)) = 2\) and \(h^0(S_3,\mathcal{O}_{S_3}) = 1\), this shows that \(|F_2 + F_3 - F_1| \neq \emptyset\). The same type of argument applies to the other permutations of~\((1,2,3)\), thus proving Claim (1).

For Claim (2), we tensor the Sequence \eqref{eq: firstexactsequence} with \(\mathcal{O}_X(K_X-F_3)\) to obtain
\[
    0 \to \mathcal{O}_X(K_X) \to \mathcal{O}_X(K_X + F_1 + F_2 - F_3) \to \mathcal{O}_{S_3}(K_X-F_3) \to 0.
\]
So, it suffices to show that \(h^0(S,\mathcal{O}_{S_3}(K_X-F_3)) = 0\). For this, consider the exact sequence
\[
    0 \to \mathcal{O}_X(K_X-F_1-F_2) \to \mathcal{O}_X(K_X- F_3) \to \mathcal{O}_{S_3}(K_X-F_3) \to 0.
\]
By Serre duality, \(H^0\) and \(H^1\) of the first two sheaves vanish, so  \(h^0(S,\mathcal{O}_{S_3}(K_X-F_3)) = 0\). 
\end{proof}

Since \(|F_i + F_j|\) is \(1\)-dimensional and without fixed components, the linear system \(|F_i + F_j - F_k|\), if non-empty, contains a unique divisor \(S_k\). In the following proposition, we collect basic properties of \(S_1,S_2,S_3\) and, in particular, we see that they are determined by their support.

Recall that for (simply laced) Dynkin diagrams, the coefficients of the highest root are given as follows \cite[Ch. VI, Prop.~25, p.~165 and pp.~250ff.]{Bourbaki}.
\[
\begin{tikzpicture}[scale=0.5, label distance=2]
    \tikzstyle{every node}=[nodal]
    \begin{scope}[xshift=-3cm]
    \node (A0) at (-1,0) [label=below:\(1\)] {};
    \node (A1) at (-2,0) [label=below:\(1\)] {};
    \node (B0) at ( 1,0) [label=below:\(1\)] {};
    \node (B1) at ( 2,0) [label=below:\(1\)] {};
    \draw (A0)--(A1) (B0)--(B1);
    \draw [dashed] (A0)--(B0);
    \end{scope}
    \begin{scope}[xshift=5cm]
    \node (A0) at (-1,0) [label=below:\(2\)] {};
    \node (A1) at (-2,0) [label=below:\(1\)] {};
    \node (B0) at ( 1,0) [label=below:\(2\)] {};
    \node (B1) at ( 2,0) [label=below:\(2\)] {};
    \node (C1) at (3,1) [label={[label distance=1]right:\(1\)}] {};
    \node (C2) at (3,-1) [label={[label distance=1]right:\(1\)}] {};
    \draw (A0)--(A1) (B0)--(B1) (C1)--(B1)--(C2);
    \draw [dashed] (A0)--(B0);
    \end{scope}
    \begin{scope}[yshift=-3cm]
    \begin{scope}[xshift=-6cm]
    \node (C0) at ( 0,0) [label=below:\(3\)] {};
    \node (A1) at (-1,0) [label=below:\(2\)] {};
    \node (A2) at (-2,0) [label=below:\(1\)] {};
    \node (B1) at ( 1,0) [label=below:\(2\)] {};
    \node (B2) at ( 2,0) [label=below:\(1\)] {};
    \node (C1) at ( 0,1) [label={[label distance=1]right:\(2\)}] {};
    \draw (C0)--(A1)--(A2) (C0)--(B1)--(B2) (C0)--(C1);
    \end{scope}
    \begin{scope}
    \node (C0) at ( 0,0) [label=below:\(4\)] {};
    \node (A1) at (-1,0) [label=below:\(3\)] {};
    \node (A2) at (-2,0) [label=below:\(2\)] {};
    \node (B1) at ( 1,0) [label=below:\(3\)] {};
    \node (B2) at ( 2,0) [label=below:\(2\)] {};
    \node (B3) at ( 3,0) [label=below:\(1\)] {};
    \node (C1) at ( 0,1) [label={[label distance=1]right:\(2\)}] {};
    \draw (C0)--(A1)--(A2) (C0)--(B1)--(B2)--(B3) (C0)--(C1);
    \end{scope}
    \begin{scope}[xshift=7cm]
    \node (C0) at ( 0,0) [label=below:\(6\)] {};
    \node (A1) at (-1,0) [label=below:\(4\)] {};
    \node (A2) at (-2,0) [label=below:\(2\)] {};
    \node (B1) at ( 1,0) [label=below:\(5\)] {};
    \node (B2) at ( 2,0) [label=below:\(4\)] {};
    \node (B3) at ( 3,0) [label=below:\(3\)] {};
    \node (B4) at ( 4,0) [label=below:\(2\)] {};
    \node (C1) at ( 0,1) [label={[label distance=1]right:\(3\)}] {};
    \draw (C0)--(A1)--(A2) (C0)--(B1)--(B2)--(B3)--(B4) (C0)--(C1);
    \end{scope}

    \end{scope}
\end{tikzpicture}
\]

\begin{proposition} \label{prop: propertiesofcharacteristiccycle}
If \(S_k \in |F_i + F_j - F_k|\), then the following hold:
\begin{enumerate}
    \item \(S_k^2 = -2\), \(S_k.F_i = S_k.F_j = 0\), and \(S_k.F_k = 2\).
    \item \(S_1.S_2 = S_1.S_3 = S_2.S_3 = 2\).
    \item There exist simple fibers \(G_1,G_2,G_3\) with \(G_l \in |2F_l|\) such that \(S_{j} + S_k = G_i\).
    \item The dual graph of the components of each \(S_k\) is a Dynkin diagram, and their coefficients are given by the corresponding highest root.
\end{enumerate}
\end{proposition}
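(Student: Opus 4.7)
The plan is to establish the four claims in sequence. Claims (1) and (2) are direct bilinear computations using $S_k \sim F_i + F_j - F_k$ (possibly with a $K_X$-twist in the classical case, which does not affect intersection numbers) and $F_l.F_m = 1 - \delta_{lm}$: one obtains $S_k^2 = 0 + 2 - 2 - 2 = -2$, $S_k.F_i = S_k.F_j = 0$, $S_k.F_k = 2$, and analogously $S_j.S_k = (F_i + F_k - F_j).(F_i + F_j - F_k) = 2$.

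For Claim (3), Claim (1) places $S_j + S_k$ in $|2F_i|$, so it is a fiber $G_i$ of the genus one fibration $|2F_i|$. It remains to exclude that $G_i$ is a multiple fiber, i.e.\ that $G_i = 2F_i$ or, in the classical case, $G_i = 2 F_i'$ for the other half-fiber $F_i' = F_i + K_X$. Assume $G_i = 2F_i$: then $S_j$ and $S_k$ are effective sub-divisors of $2F_i$ supported on the components of $F_i$. The key auxiliary lemma is that on any Kodaira fiber $F$, an effective $(-2)$-divisor $D \leq 2F$ must satisfy $D \leq F$ or $2F - D \leq F$; this follows from the description of positive real roots $\alpha + n\delta$ of the affine root system attached to $F$, since the bound $D \leq 2F$ forces $n \in \{0, 1, 2\}$, and in each of the three cases a direct check yields the dichotomy. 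Applying this to $S_j$, one of $S_j \leq F_i$ or $S_k \leq F_i$ holds; but $S_j \leq F_i$ produces an effective divisor in $|F_j - F_k|$, contradicting $|F_j - F_k| = \emptyset$ (which follows from $(F_j - F_k).F_j = -1 < 0$ and the nefness of $F_j$). The symmetric case is excluded by $|F_k - F_j| = \emptyset$, and the classical subcase $G_i = 2F_i'$ is handled identically, replacing $F_j - F_k$ with $K_X + F_j - F_k$.

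For Claim (4), write $G_i = \sum_l \tilde{m}_l C_l$ where the $C_l$ form an extended Dynkin diagram $\tilde{X}$, and $S_k = \sum_l a_l C_l$ with $0 \leq a_l \leq \tilde{m}_l$. Since $S_k.F_k = G_i.F_k = 2$ with non-negative summands, $a_l = \tilde{m}_l$ for every $C_l$ with $C_l.F_k > 0$; dually, $a_l = 0$ whenever $C_l.F_j > 0$. The remaining coefficients are pinned down by the identities $S_k.C_l = F_j.C_l - F_k.C_l$, using $F_i.C_l = 0$ (since $F_i$ and $G_i$ are disjoint fibers of $|2F_i|$). A case analysis over the Kodaira types of $G_i$ then shows that the induced sub-diagram on the support of $S_k$ is of finite ADE type with coefficients matching those of the highest root.

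The principal obstacle is the affine root-system lemma underlying Claim (3); Claim (4) becomes a largely mechanical case-by-case verification once (3) and the structure of simple Kodaira fibers are in hand.
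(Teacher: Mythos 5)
Your treatment of (1)--(3) is correct, and your argument for (3) takes a genuinely different route from the paper's. The paper proves (4) first and only afterwards rules out \(G_i = 2F_i\), via a rather delicate argument involving a simple component \(R\) of \(F_i\) that lies in \(S_k\) but not \(S_j\) and the minimality of \(S_j\) among effective divisors on its support. Your route --- writing an effective \((-2)\)-divisor \(D \le 2F_i\) supported on the fiber as \(\alpha + nF_i\) with \(n \in \{0,1,2\}\), deducing \(S_j \le F_i\) or \(S_k \le F_i\), and killing both cases by pairing the resulting effective member of \(|F_j - F_k|\) (resp.\ \(|F_k - F_j|\), with the \(K_X\)-twist for the second half-fiber in the classical case) against the nef class \(F_j\) --- is self-contained, decouples (3) from (4), and is arguably cleaner. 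One small point to make explicit: if \(G_i = 2F_i\), then \(F_i\) must be reducible (otherwise \(S_j\) would be a multiple of an irreducible curve of square \(0\), contradicting \(S_j^2 = -2\)), so \(F_i\) is indeed of Kodaira type and your root-system lemma applies.

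For (4), your boundary conditions are all correct: \(a_l = \tilde m_l\) where \(C_l.F_k > 0\), \(a_l = 0\) where \(C_l.F_j > 0\) (which in particular forces \(\mathrm{Supp}(S_k) \subsetneq \mathrm{Supp}(G_i)\), hence an ADE subdiagram), and the remaining \(a_l\) are determined by \(S_k.C_l = (F_j - F_k).C_l\) since the intersection matrix of the \(C_l\) has corank one. But the actual identification of the coefficients with the highest root is deferred to an unexecuted case analysis over Kodaira types of \(G_i\) and over the ways \(F_j\) and \(F_k\) can distribute their intersection number \(2\) among the components --- and that verification is the entire content of the claim. The paper avoids it with a uniform argument you should either import or replace: for any component \(R\) of \(S_k\), the relations \(G_i.R = G_j.R = 0\) give \(S_j.R = S_i.R = -S_k.R\), so \(S_k.R > 0\) would force \(G_k.R = (S_i+S_j).R < 0\), contradicting nefness of \(G_k\); hence \(S_k.R \le 0\) for all \(R\), so \(S_k\) is a positive multiple of Artin's fundamental cycle, and \(S_k^2 = -2\) forces it to equal the fundamental cycle, whose coefficients on an ADE configuration are those of the highest root. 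If you insist on the case-by-case route, note that it essentially amounts to reproving \autoref{tab:S1S2}, and you should also record why \(\mathrm{Supp}(S_k)\) is connected (a disjoint decomposition \(S_k = A + B\) would force \(A^2 = 0\) or \(B^2 = 0\), putting one summand in the radical of the fiber lattice and contradicting \(S_k.F_j = 0\)).
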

\begin{proof}
Claims (1) and (2) are obvious, so let us prove Claims (3) and (4). Since \(S_k^2 = -2\) and \(S_k.F_i = S_k.F_j = 0\), the divisor \(S_k\) is a connected sum of~\((-2)\)-curves whose support is contained in a single fiber of~\(|2F_i|\) and \(|2F_j|\). We have \(S_{j} + S_k \sim 2F_i\), so the supports of~\(S_k\) and \(S_{j}\) are contained in the same fiber \(G_i \in |2F_i|\). 

We claim first that the support of \(S_k\) is strictly contained in the support of \(G_i\). 
If this were not the case, the support of the fiber \(G_j = S_i + S_k\) would coincide with the support of \(S_k\), since its dual graph already contains an extended Dynkin diagram.
Hence, the supports of~\(G_i\) and~\(G_j\) would coincide, and since the multiplicities of the components of \(G_i\) and~\(G_j\) are uniquely determined by the dual graph and the multiplicity of the fiber, we would have \(mG_i = nG_j\) for some \(m,n \in \{1,2\}\), a contradiction. 
Therefore, the dual graph of \(S_k\) is a connected subgraph of an extended Dynkin diagram, and, thus, it is a (simply laced) Dynkin diagram.

Now, notice that \(S_k.R \leq 0\) for any component~\(R\) of \(S_k\). 
Indeed, assume by contradiction that \(S_k.R > 0\). Then, the equations \((S_j+S_k).R = G_i.R =0\) imply that \(S_i.R = S_j.R <0\), whence \(R.G_k = R.(S_i+S_j)<0\), contradicting the fact that \(G_k\) is nef. Therefore, \(S_k\) is a multiple of the so-called fundamental cycle by \cite[Proposition~2]{Artin}, whose coefficients are given by the corresponding highest root (by uniqueness of the fundamental cycle). As \(S_k^2 = -2\), \(S_k\) must in fact coincide with the fundamental cycle.

Finally, we need to show that \(G_i\) is simple. Assume that \(G_i = 2F_i\). Then, \(S_j + S_k = 2F_i\). Since \(F_k.F_i = 1\) and \(F_k.S_k = 2\), there is a simple component \(R\) of \(F_i\) contained in \(S_k\) (with multiplicity \(2\)) but not contained in \(S_j\). Hence, \(2F_i - 2R = (S_k - 2R) + S_j\). Since \((F_i-R).R' \leq 0\) for all components \(R'\) of \(S_j\) and since \(S_j\) is the smallest effective divisor supported on \({\rm Supp}(S_j)\) with this property, we have that \((F_i-R)-S_j\) is effective. Now, the equation \((S_k-2R) = (F_i-R)+(F_i-R-S_j)\) implies that \(S_k - 2R\) contains the fundamental cycle of the support of \(F_i - R\), but this is absurd, as then \(S_k\) and \(F_i\) would have the same support. Hence, \(G_i\) is simple.
\end{proof}

The following remark explains the geometric significance of the curves \(S_k \in |F_i + F_j - F_k|\).

\begin{remark} \label{rem: triangleremark}
If \(|F_1 + F_2 - F_3| \neq \emptyset\), then the morphism \(\varphi\) induced by \(|F_1 + F_2 + F_3|\) is generically of degree \(2\) onto a normal symmetroid cubic surface \(\mathcal{C}\) in \(\mathbb{P}^3\) \cite[Theorem~3.3.3]{CossecDolgachevLiedtke}. 
More precisely:
\begin{enumerate}
    \item If \(p \neq 2\), or \(p = 2\) and \(X\) is classical, then \(\mathcal{C}\) is the \emph{Cayley cubic} given by
    \[
    x_1x_2x_3 + x_0x_2x_3 + x_0x_1x_3 + x_0x_1x_2 = 0,
    \]
    which has four nodes at the four coordinate points. There are nine lines on \(\mathcal{C}\). Six of them form a tetrahedron whose vertices are the nodes. The other three lines are \(\ell_1 = \{ x_0 + x_1 = x_2 + x_3 = 0\}, \ell_2 = \{x_0 + x_2 = x_1 + x_3 = 0\},\) and \(\ell_3 = \{x_0 + x_3 = x_1 + x_2 = 0\}\).
    If \(p \neq 2\), then the \(\ell_i\) form a triangle on~\(X\) in the plane \(\{x_0 + x_1 + x_2 + x_3 = 0\}\), while if \(p = 2\), they are coplanar concurrent lines which meet at the point \([1:1:1:1]\).
    \item If \(p = 2\) and \(X\) is ordinary, then \(\mathcal{C}\) is the cubic surface with a \(D_4^1\)-singularity given by the equation
    \[
    x_1x_2x_3 + x_0x_3^2 + x_1^2x_2 + x_1x_2^2 = 0.
    \] 
    The surface contains six lines, three of which pass through the \(D_4^1\)-singularity at \([1:0:0:0]\). The other three lines are \(\ell_1 = \{x_0 = x_1 = 0\}, \ell_2 = \{x_0 = x_2 = 0\},\) and \(\ell_3 = \{x_0 = x_1 + x_2 + x_3= 0\}\). These three lines form a triangle in the plane \(\{x_0 = 0\}\).
    \item If \(p = 2\) and \(X\) is supersingular, then \(\mathcal{C}\) is the cubic surface with a \(D_4^0\)-singularity given by the equation
    \[
    x_0x_3^2 + x_1^2x_2 + x_1x_2^2 = 0.
    \]  
     The surface contains six lines, three of which pass through the \(D_4^0\)-singularity at \([1:0:0:0]\). The other three lines are \(\ell_1 = \{x_0 = x_1 = 0\}, \ell_2 = \{x_0 = x_2 = 0\},\) and \(\ell_3 = \{x_0 = x_1 + x_2 = 0\}\). These three lines are contained in the plane \(\{x_0 = 0\}\) and meet at the single point \([0:0:0:1]\).
\end{enumerate}
By studying the long exact sequence in cohomology associated to
\[
    0 \to \mathcal{O}_X(2F_k) \to \mathcal{O}_X(F_1 + F_2 + F_3) \to \mathcal{O}_{S_k}(F_1 + F_2 + F_3) \to 0,
\]
one sees that \(h^0(S_k,\mathcal{O}_{S_k}(F_1 + F_2 + F_3)) = 2\), so \(S_k\) maps to a line via \(\varphi\). Similarly, each half-fiber of each \(|2F_i|\) maps to a line. By \cite[§3.3]{CossecDolgachevLiedtke}, the images of the half-fibers are the lines through the singular points on \(\mathcal{C}\). Since the \(S_i\) are not contained in half-fibers by \autoref{prop: propertiesofcharacteristiccycle}, they map to the lines \(\ell_i\) described above. In particular, if \(p \neq 2\), or \(p = 2\) and \(X\) is ordinary, then the divisor \(S_1 + S_2 + S_3\) maps to a triangle on \(\mathcal{C}\). 
\end{remark}

By \autoref{prop: propertiesofcharacteristiccycle}, the divisors \(S_k\) are uniquely determined by their dual graphs. 
This observation, and the geometric picture explained in \autoref{rem: triangleremark}, motivate the following definition and notation.

\begin{definition}
Given a special \(3\)-sequence \((F_1,F_2,F_3)\) and \(S_k \in |F_i + F_j - F_k|\), the dual graph of~\(S_k\) is denoted by~\(\Gamma_k\). The dual graph of \(S_1 + S_2 + S_3\) is denoted by \(\Gamma\) and is called the \emph{triangle graph} of \((F_1,F_2,F_3)\).
\end{definition}
As it turns out, there exist only finitely many possibilities for the shape of the triangle graph~\(\Gamma\), listed in \autoref{tab:S1S2S3} on page~\pageref{tab:S1S2S3}. The rest of this section is dedicated to the proof that this list is complete.

\begin{lemma} \label{tab:S1S2}
For \(G_i = S_j+S_k\), the possible types of \(G_i\), \(S_j\) and \(S_k\) are listed in the following table, up to permutation of \(S_j\) and \(S_k\).
\end{lemma}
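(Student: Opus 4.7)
The plan is to reduce the classification of triples $(G_i, S_j, S_k)$ to a finite combinatorial enumeration based on the constraints imposed by \autoref{prop: propertiesofcharacteristiccycle}.

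By \autoref{prop: propertiesofcharacteristiccycle}(3), $G_i$ is a simple fiber of the genus one fibration $|2F_i|$, so its dual graph is an extended Dynkin diagram of affine type. Moreover, by \autoref{lem: combinatorial.0}(2), the root lattice generated by the non-identity components of $G_i$ embeds into $E_8$, which bounds its rank by $8$. This restricts $G_i$ to a finite list of Kodaira types: $I_n$ for $2 \leq n \leq 9$, $I_n^*$ for $0 \leq n \leq 4$, $III$, $IV$, $IV^*$, $III^*$, and $II^*$. The irreducible types $I_0$, $I_1$, $II$ are excluded since the decomposition $G_i = S_j + S_k$ requires $G_i$ to have at least two components.

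By \autoref{prop: propertiesofcharacteristiccycle}(4), each of $S_j$ and $S_k$ is the fundamental cycle of its connected support, so its dual graph is a simply-laced Dynkin diagram and its multiplicities equal the coefficients of the corresponding highest root. Writing $G_i = \sum_l n_l R_l$ in terms of its components, a valid decomposition is thus an assignment $n_l = a_l + b_l$ with $a_l, b_l \geq 0$ such that both $(a_l)$ and $(b_l)$ are highest-root coefficient vectors of connected Dynkin subdiagrams of the dual graph of $G_i$. Once such a splitting is exhibited, the identity $S_j \cdot S_k = 2$ is automatic: from $S_j^2 = -2$ and $S_j \cdot G_i = 2(S_j \cdot F_i) = 0$ by \autoref{prop: propertiesofcharacteristiccycle}(1), one obtains $S_j \cdot S_k = S_j \cdot (G_i - S_j) = -S_j^2 = 2$.

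The remainder is a case-by-case enumeration over the finite list of Kodaira types above. For the cyclic cases $G_i \in \{I_n, IV, III\}$ (all multiplicities equal to $1$ and dual graph a cycle $\tilde{A}_m$), the connected Dynkin subdiagrams are arcs, and the splittings correspond to partitioning the cycle into two arcs, yielding pairs $(A_a, A_{m+1-a})$. For the types $I_n^*$ with dual graph $\tilde{D}_{n+4}$, which has four leaves of multiplicity $1$ attached to a backbone of multiplicity $2$, one verifies that the valid decompositions split into the three families $(A_1, D_{n+4})$, $(A_{n+3}, A_{n+3})$, and $(D_m, D_{n+6-m})$ for the admissible values of $m$, depending on whether the splitting occurs at a leaf, symmetrically across the backbone, or at an internal node or trivalent shared between the two pieces. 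The bulk of the effort, and the main obstacle, lies in the exceptional types $\tilde{E}_6$, $\tilde{E}_7$, $\tilde{E}_8$: there, the trivalent node carries multiplicity $3$, $4$, or $6$, respectively, so a priori many coefficient splittings are plausible, but the constraint that each side simultaneously realizes the highest root of a connected Dynkin subdiagram is quite rigid. The verification amounts to enumerating all connected Dynkin subdiagrams of the extended Dynkin diagram and checking, for each one, whether the complementary multiplicities inside the null root also form the highest root of a connected Dynkin subdiagram. Compiling the outcomes across all types produces the table.
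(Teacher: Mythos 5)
Your proposal is correct and follows essentially the same route as the paper: reduce to the purely combinatorial problem of writing the fiber class \(G_i\) (an affine Dynkin null root) as a sum of two highest roots of connected Dynkin subdiagrams, using \autoref{prop: propertiesofcharacteristiccycle}(3)--(4), and then enumerate. The paper merely streamlines the enumeration by first fixing a simple component \(R\) of \(G_i\), which lies in exactly one of \(S_j,S_k\), and then checking whether the complement \(G_i - S_k\) is again a fundamental cycle; this is equivalent to your splitting criterion \(n_l = a_l + b_l\).
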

\begin{table}[h!]
    \centering
    \begin{tabular}{ll} 
        \toprule
        \(G_i\) & \((S_j,S_k)\)  \\
        \midrule 
        \(\II^*\) & \((E_8,A_1)\) or \((E_7,D_8)\)\\
        \(\III^*\) & \((E_7,A_1)\), \((E_6,A_7)\) or \((D_6,D_6)\)\\
        \(\IV^*\) & \((E_6,A_1)\) or \((D_5,A_5)\) \\
        \(\IV\) & \((A_2,A_1)\) \\
        \(\III\) & \((A_1,A_1)\) \\
        \(\I_n^*\) & \((D_{n+4},A_1)\), \((D_k,D_{n-k+6})\), \((D_{n+3},A_3)\) or \((A_{n+3},A_{n+3})\)\\
        \(\I_n\) & \((A_k,A_{n-k})\)\\
        \bottomrule
    \end{tabular}
\end{table}
\begin{proof}
Choose a simple component \(R\) of \(G_i\). Then, without loss of generality, we may assume that \(R\) is in \(S_k\) and not in \(S_j\). Fix the type of \(G_i\) and \(S_k\). A combination \(G_i,S_j,S_k\) occurs in the table if and only if \(G_i - S_k\) is the fundamental cycle of its support. From here, compiling the list in the lemma is straightforward.
\end{proof}

\begin{proposition} \label{prop:S1S2S3}
Up to permutation, the only possibilities for the types of \((S_1,S_2,S_3)\) and their dual graphs are listed in \autoref{tab:S1S2S3}.
\end{proposition}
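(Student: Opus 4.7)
The plan is to reduce the statement to a finite case analysis based on \autoref{tab:S1S2}. From \autoref{prop: propertiesofcharacteristiccycle} we already know that each $S_k$ is a fundamental cycle, determined by its dual Dynkin diagram, and that $G_i = S_j + S_k$ is a simple fiber of $|2F_i|$. Consequently each $S_i$ appears as a summand in exactly two of the three decompositions $G_\ell = S_j + S_k$, $\ell \neq i$. Equivalently, among the pairs listed in \autoref{tab:S1S2}, we must pick three that use each of the three types $S_1, S_2, S_3$ precisely twice.

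I would fix the type of $S_1$ and read off from \autoref{tab:S1S2} the admissible partners, then repeat for $S_2, S_3$. For the \emph{rigid} choices of $S_1$, namely those with a very short partner list ($E_8$ pairs only with $A_1$; $E_7$ with $A_1$ or $D_8$; $E_6$ with $A_1$ or $A_7$; $A_2$ only with $A_1$; $D_5$ only with $A_5$; $A_7$ only with $E_6$), the enumeration reduces immediately to a handful of candidate triples, and for each one the third pair $(S_j, S_k)$ is checked directly against the table. The entries of \autoref{tab:S1S2S3} involving an exceptional component come out of this step.

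The more delicate part concerns the parametric families $A_n$ and $D_n$ arising from $\I_n$ and $\I_n^*$ fibers. Writing for instance $S_1 = A_a$, $S_2 = A_b$, $S_3 = A_c$, the requirement that each of the pairs $(S_i, S_j)$ appears in the $\I_n$ row of \autoref{tab:S1S2} gives a system of linear constraints of the form $a + b + 1 = n_3$, $a + c + 1 = n_2$, $b + c + 1 = n_1$. Crucially, \autoref{lem: combinatorialonefibration}(2) forces the root lattice of each $G_i$ to embed into $E_8$, so $n_i \leq 9$ for $\I_{n_i}$ fibers and $n_i \leq 4$ for $\I_{n_i}^*$ fibers; this caps the indices and leaves only finitely many solutions. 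Mixed situations (e.g.\ $S_1 = A_a$ and $S_2 = D_b$, with the pair arising from an $\I_n^*$ row) are treated the same way, with a slightly longer but equally finite list of linear relations. The identities $S_i.S_j = 2$ from \autoref{prop: propertiesofcharacteristiccycle}(2) are automatically satisfied as soon as $S_i + S_j = G_k$ with $G_k^2 = 0$, so they provide no further constraint but help us discard configurations where the intersection pattern inside a fiber is not realisable.

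The main obstacle is therefore not conceptual but combinatorial: one must organise the enumeration so as to avoid missing mixed $A/D$ cases or double-counting configurations that can be realised by several fiber types. I would handle this by tabulating the admissible pairs grouped by fiber type and then going through the rigid types first, the pure $A$-type parametric case second, and the mixed $A$/$D$ case last, using the $E_8$-embedding bound throughout to keep all index ranges finite. After this systematic sweep, the surviving triples match exactly the entries of \autoref{tab:S1S2S3}.
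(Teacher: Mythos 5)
Your overall strategy coincides with the paper's: fix the types, use \autoref{tab:S1S2} to constrain which pairs $(S_j,S_k)$ can sum to a fiber, and bound the parametric families. But as written the sieve is too coarse, and it would both admit triples that must be excluded and miss triples that must be included. The central gap is that pairwise admissibility together with the bound ``the root lattice of each individual $G_i$ embeds into $E_8$'' is \emph{not} sufficient. Concretely, $(S_1,S_2,S_3)=(D_5,D_5,D_5)$ passes your test: each pair $D_5+D_5$ assembles into a fiber of type $\I_4^*$ with root lattice $D_8\hookrightarrow E_8$, so all three linear constraints and all three per-fiber bounds are satisfied — yet this triple does not occur and is absent from \autoref{tab:S1S2S3}. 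Excluding it requires two ingredients you do not use: an analysis of how the three fundamental cycles actually overlap inside the fibers (for $D_m$ with $m\ge 5$ all three $S_i$ are forced to share the leaf of the long branch), and the full strength of \autoref{lem: combinatorialonefibration}(2) applied to \emph{all} fibers of a single fibration $|2F_3|$ simultaneously: the components of $S_3$ disjoint from $G_3$ sit in other fibers of $|2F_3|$, so $(m+n-2)+(l-2)\le 8$, which kills $m\ge 5$. Similar incidence arguments (e.g.\ a central vertex of $S_3$ that would have to be the trivalent vertex of both $S_1$ and $S_2$) are needed to eliminate other pairwise-admissible combinations such as $(D_m,D_n,A_5)$ with $G_1,G_2$ of type $\IV^*$. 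Relatedly, the proposition asserts the dual graphs and not only the types: for $(D_4,D_4,D_4)$ and for the cyclic cases there are two inequivalent triangle graphs depending on whether the $S_i$ share a component, respectively on how the cycle $S_k$ meets the simple components of $G_k$; your proposal gives no mechanism for enumerating these.

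Two smaller but genuine errors: your list of ``rigid'' types is wrong for $A_2$, $D_5$ and $A_7$ — via the $\I_n$ and $\I_n^*$ rows of \autoref{tab:S1S2}, $A_2$ pairs with any $A_{n-2}$, $D_5$ pairs with $A_1$, $A_3$, $A_5$ and various $D_k$, and $A_7$ pairs with $A_7$ and other $A_k$; following your parenthetical literally would drop table entries such as $(D_5,A_5,A_3)$ and $(D_m,A_3,A_1)$ with $m=5$. Also the linear relation for an $\I_n$ fiber should read $a+b=n$ (the $n$ components of the cycle are partitioned between the two chains), not $a+b+1=n$. None of these invalidates the framework, but without the global fiber bound and the overlap analysis the case sweep does not close.
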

\begin{proof}
We use \autoref{tab:S1S2} throughout. Once we have determined the types of \(S_i\) and \(G_i\), drawing the dual graph is straightforward.
 
If \(S_1\) is of type~\(E_8\), then both \(S_2\) and \(S_3\) must be of type~\(A_1\). Here, \(G_2\) and \(G_3\) are of type~\(\II^*\), and \(G_1\) is of type~\(\I_2\) or \(\III\).
 
If \(S_1\) is of type~\(E_7\), then \(S_2\) and \(S_3\) are of type~\(D_8\) or \(A_1\). 
Since \(X\) admits no fiber of type~\(\I_n^*\) with \(n \geq 5\) by \autoref{lem: combinatorial.0}, \(S_2\) and \(S_3\) cannot be both of type~\(D_8\), so we may assume that \(S_3\) is of type~\(A_1\). If \(S_2\) is of type~\(D_8\), then the type of \((G_1,G_2,G_3)\) is \((\I_4^*,\III^*,\II^*)\). If \(S_2\) is of type~\(A_1\), then \(G_2\) and \(G_3\) are of type~\(\III^*\), and \(G_1\) is of type~\(\I_2\) or \(\III\).
 
If \(S_1\) is of type~\(E_6\), then \(S_2\) and \(S_3\) are of type~\(A_7\) or \(A_1\). 
If both are of type~\(A_7\), then the type of \((G_1,G_2,G_3)\) is \((\I_4^*,\III^*,\III^*)\), because $X$ admits no fiber of type \({\rm I}_{14}\) by \autoref{lem: combinatorial.0}.
If \(S_2\) is of type~\(A_7\) and \(S_3\) is of type~\(A_1\), then the type of \((G_1,G_2,G_3)\) is \((\I_8,\IV^*,\III^*)\).
If \(S_2\) and \(S_3\) are of type~\(A_1\), then \(G_2\) and \(G_3\) are of type~\(\IV^*\) and \(G_1\) is of type~\(\I_2\) or \(\III\). 

Next, assume that the type of \((S_1,S_2,S_3)\) is \((D_m,D_n,D_l)\) with \(m \geq n \geq l\ge 4\). 
If two of the \(G_i\) are of type \({\rm III}^*\), then so is the third, since, by \autoref{lem: combinatorial.0}, \(X\) does not admit a fiber of type \({\rm I}_6^*\). Hence, we get \((m,n,l) = (6,6,6)\). If only one of the \(G_i\) is of type \({\rm III}^*\), then \(l = 4\), since \(X\) does not admit a fiber of type \({\rm I}_n^*\) with \(n \geq 5\). Hence, we get \((m,n,l) = (6,6,4)\).
Now, assume that all \(G_i\) are of type \({\rm I}_k^*\). If \(m \geq 5\), then all the \(S_i\) share a component, namely the leaf of the long branch of \(S_1\). Thus, there are \(l-2\) components of \(S_3\) disjoint from \(G_3\) and they span a root lattice of rank \(l-2\). The root lattice associated to \(G_3\) has rank \(m + n - 2\). By \autoref{lem: combinatorial.0}, we obtain \(m+n+l-4 \leq 8\), contradicting \(m \geq 5\). Hence, we have \((m,n,l) = (4,4,4)\) and we get two possible graphs, according to whether the \(S_i\) share a component or not. 

Now, assume that the type of \((S_1,S_2,S_3)\) is \((D_m,D_n,A_l)\), with \(m \geq n\ge 4\). First, we claim that \(G_1\) and \(G_2\) are of type \(\I_k^*\) for a suitable \(k\).  If one of them is not, then \(l = 5\), \(G_1\) and \(G_2\) are of type \(\IV^*\) and \(G_3\) is of type \(\I_4^*\). But then the central vertex of \(S_3\) is the vertex of valency \(3\) in \(S_1\) and \(S_2\), which is absurd.
Next, if \(G_3\) is of type~\(\III^*\), then \(l \in \{1,3\}\). 
If \(l = 3\), then the central vertex of \(S_3\) would have to be the simple vertex on the long tail of~\(S_1\) and~\(S_2\). 
But these are two distinct curves on~\(X\), since \(S_1 + S_2 = G_3\) is of type~\(\III^*\), so this case does not occur. Thus, \(l = 1\) and the type of \((G_1,G_2,G_3)\) is \((\I_2^*,\I_2^*,\III^*)\).
Finally, if \(G_3\) is of type~\(\I_{m+n-6}^*\), then both \(l =3\) and \(l=1\) are possible and the type of \((G_1,G_2,G_3)\) is \((\I_{n-3}^*,\I_{m-3}^*,\I_{m+n-6}^*)\) in the former case and \((\I_{n-4}^*,\I_{m-4}^*,\I_{m+n-6}^*)\) in the latter.

Next, assume that the type of \((S_1,S_2,S_3)\) is \((D_m,A_n,A_l)\), with \(m\ge 4\). We assume \(n \geq l\). We have \(n,l \in \{1,3,5\}\). 
If \(n = 5\), then necessarily \(m = 5\) and \(l\in \{1,3,5\}\). The type of \((G_1,G_2,G_3)\) is \((\I_6,\I_1^*,\IV^*), (\I_8,\I_2^*,\IV^*),\) and \((\I_2^*,\IV^*,\IV^*)\), respectively.
Moreover, the cases where \(n,l\in \{1,3\}\) are all possible and, up to the ambiguity between \(\I_2\) and \(\III\), the type of \((S_1,S_2,S_3)\) determines the type of \((G_1,G_2,G_3)\).
 
Finally, assume that the type of \((S_1,S_2,S_3)\) is \((A_m,A_n,A_l)\). We assume \(m\ge n \ge l\). If the~\(G_i\) are all of type~\(\I_k^*\), then \(m=n=l \ge 3\). If \(G_3\) is of type~\(\I_k^*\), then \(m=n\ge 3\). Notice that, if \(l \neq 1\), then there are two different ways in which the cycle \(S_3\) can meet the simple components of \(G_3\).
Finally, if the \(G_i\) are all of type~\(\I_k\) (or \(\III\) or \(\IV\)), then \(m,n,l\ge 1\). Notice again that, if \(l \neq 1\), then there are two different ways in which the cycle \(S_3\) can meet \(G_3\).
\end{proof}

\begin{remark} \label{rk: lessthan11}
The triangle graph \(\Gamma\) has at most \(11\) components. Indeed, we have \(G_1.S_1=2\), so there are at most \(2\) components in \(S_1\) that are not orthogonal to \(F_1\). The other \(k\) components of the triangle graph lie in fibers of \(|2F_1|\), and \(G_1\) is the only whole fiber of \(|2F_1|\) contained in the triangle graph, so \(k\le 9\) by \autoref{lem: combinatorial.0}. Similarly, if the triangle graph contains a unique component not lying in fibers of \(|2F_1|\), then \(|\Gamma|\le 10\). From this, we deduce the upper bounds on \(m,n,\) and \(l\) in \autoref{tab:S1S2S3}.
\end{remark}

\begin{remark}
By \autoref{rem: triangleremark}, the divisors \(S_i\) map to lines on the cubic surface which is the image of the morphism induced by \(|F_1 + F_2 + F_3|\). If there exists a curve simultaneously contained in \(S_1,S_2,\) and \(S_3\), then the images of the \(S_i\) have to meet in a single point. By \autoref{rem: triangleremark}, this implies that \(p = 2\) and \(X\) is classical or supersingular. The interested reader can use this criterion to compile a list of possible triangle graphs if \(p \neq 2\) or \(X\) is ordinary from the list in \autoref{tab:S1S2S3}.
\end{remark}

\begin{remark} \label{rem: prop31inextraspecial}
In \cite[Proposition~3.1]{extra-special}, we proved that every \(2\)-sequence \((F_1,F_2)\) such that there exist simple fibers \(G_1 \in |2F_1|\) and \(G_2 \in |2F_2|\) that share a component extends to a \(3\)-sequence \((F_1,F_2,F_3)\). The half-fiber \(F_3\) we constructed satisfies the property that there exists a configuration of \((-2)\)-curves \(\sum a_iR_i\) such that \(F_3 + \sum a_iR_i \in |F_1 + F_2|\). In other words, \(F_1 + F_2 - F_3 \equiv \sum a_i R_i\), so \((F_1,F_2,F_3)\) is special. 
Therefore, a \(2\)-sequence \((F_1,F_2)\) extends to a special \(3\)-sequence if and only if there are two simple fibers \(G_1 \in |2F_1|, G_2 \in |2F_2|\) that share a component. (For the converse, \(S_3\) provides such a component.)
\end{remark}

Finally, we note that most Enriques surfaces that contain a \((-2)\)-curve contain a triangle graph.

\begin{theorem}
If \(X\) contains a \((-2)\)-curve and is not extra-special, then \(X\) admits a special \(3\)-sequence. In particular, \(X\) is a double cover of one of the cubic surfaces in \autoref{rem: triangleremark} and the dual graph of \(X\) contains one of the triangle graphs listed in \autoref{tab:S1S2S3}.
\end{theorem}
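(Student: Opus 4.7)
My plan is to leverage the criterion established in \autoref{rem: prop31inextraspecial}: a 2-sequence $(F_1, F_2)$ extends to a special 3-sequence precisely when there exist simple fibers $G_1 \in |2F_1|$ and $G_2 \in |2F_2|$ sharing a component. It therefore suffices to produce a 2-sequence $(F_1, F_2)$ on $X$ for which the given $(-2)$-curve $R$ is a simple component of a simple fiber of both $|2F_1|$ and $|2F_2|$. Once such $(F_1, F_2)$ is found, the first assertion follows from \autoref{rem: prop31inextraspecial}, and the remaining statements---that $X$ is a double cover of one of the cubic surfaces in \autoref{rem: triangleremark} and that its dual graph contains one of the triangle graphs of \autoref{tab:S1S2S3}---are immediate from \autoref{rem: triangleremark} and \autoref{prop:S1S2S3}.

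Since $X$ is not extra-special, \autoref{thm: non-degeneracy.1} and \autoref{thm: non-degeneracy.2} give $\min\nd(X) \geq 3$, so $X$ admits 3-sequences and \emph{a fortiori} 2-sequences. The orthogonal complement $R^\perp \subset \Num(X) \cong E_{10}$ has rank $9$, signature $(1,8)$, and discriminant $2$, so by the classification of even lattices it is isomorphic to $E_7 \oplus U$. In particular, $R^\perp$ contains infinitely many primitive isotropic vectors, and by reflecting along elements of $W_X^{\nod}$ fixing $R$ each such class can be brought into the nef cone, yielding a half-fiber $F_1$ with $F_1.R = 0$ by \autoref{lem: half-fiber}. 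I would then argue that for at least one such $F_1$ the curve $R$ is \emph{not} a component of $F_1$---so that $R$ lies in a simple fiber of $|2F_1|$---by combining the combinatorial bounds of \autoref{lem: combinatorial.0} with the infinitude of primitive isotropic classes in $R^\perp$.

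With $F_1$ in hand, the class $F_1$ lies in a hyperbolic plane $U \subset R^\perp$; taking the complementary isotropic generator and applying $W_X^{\nod}$-reflections that fix both $R$ and $F_1$, I would obtain a half-fiber $F_2$ with $F_1.F_2 = 1$ and $F_2.R = 0$. The same combinatorial argument as before ensures that $F_2$ can be chosen with $R$ not a component of its support. The 2-sequence $(F_1, F_2)$ then has $R$ as a simple component of simple fibers of both pencils, so \autoref{rem: prop31inextraspecial} produces the desired special 3-sequence $(F_1, F_2, F_3)$.

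The main obstacle is justifying, in both half-fiber constructions, that the non-extra-special hypothesis forces the existence of a choice where $R$ avoids the support of the half-fiber; in the three extra-special types the rigidity of the fibration structures would force $R$ into half-fibers, whereas outside of these exceptional configurations the combinatorial freedom granted by \autoref{lem: combinatorial.0} together with the infinitude of isotropic directions in $R^\perp$ should provide the room needed. A careful case analysis based on the dual graph of the fiber of $|2F_1|$ containing $R$ will likely be the technical heart of the argument.
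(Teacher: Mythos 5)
Your reduction to \autoref{rem: prop31inextraspecial} — produce a $2$-sequence $(F_1,F_2)$ with simple fibers of both pencils sharing a component — is the right one, and it is also the paper's. But the way you try to produce the shared component has genuine gaps. First, the claim that an isotropic class in $R^\perp$ can be brought into the nef cone by reflections \emph{fixing} $R$ is unjustified: the nef cone is a fundamental domain for the full group $W_X^{\nod}$, and the nef representative supplied by \autoref{lem: reducibility} for an effective isotropic class in $R^\perp$ need not remain orthogonal to $R$, since the correction term $\sum a_iR_i$ may involve $R$ itself. So even the existence of a half-fiber $F_1$ with $F_1.R=0$ is not established by your argument. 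Second, and more seriously, the step you yourself flag as the ``technical heart'' — arranging for $R$ to lie in a \emph{simple} fiber of both $|2F_1|$ and $|2F_2|$ rather than in a half-fiber — is exactly where the content of the theorem sits, and ``the infinitude of isotropic directions in $R^\perp$'' does not obviously supply it: a fixed $(-2)$-curve can lie in half-fibers of many distinct fibrations (on the surface of type~$2\tilde{D}_4$ of \autoref{example:2D4}, the central component of $F_0$ lies in the half-fibers $F_0,F_4,\dots,F_9$ of seven different fibrations; \autoref{lem: F1.F2.no.common.components}(1) forbids shared components only when the two half-fibers form a $2$-sequence). Finally, by insisting that the \emph{given} curve $R$ be the shared component, you are attempting a statement stronger than the theorem requires, with no argument that it holds for every $R$.

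The paper sidesteps all of this by not fixing the curve in advance. It extends a $3$-sequence to a $c$-degenerate $10$-sequence via \autoref{thm: 10sequence} and runs a pigeonhole count (on the degenerate entries $R_{i,j}$ if $c\neq 10$, and on the components of a reducible fiber against the nine other half-fibers if $c=10$) to find \emph{some} $(-2)$-curve orthogonal to three pairwise-incident half-fibers. Such a curve lies in a fiber of each of the three pencils, and by \autoref{lem: F1.F2.no.common.components}(1) it can lie in a half-fiber of at most one of them, so it is automatically a common component of two simple fibers; the residual case in which every fiber of every fibration has at most two components is handled separately using the existence of a special bisection. To salvage your approach you would have to carry out the deferred case analysis in full, and it would most likely end up reconstructing this counting argument in a less convenient form.
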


\begin{proof}
Let \(F_1\) be a half-fiber on \(X\). By \autoref{thm: non-degeneracy.2}, we can extend \(F_1\) to a \(3\)-sequence \((F_1,F_2,F_3)\) and further to a \(c\)-degenerate \(10\)-sequence for some \(c \geq 3\). 

If \(c \neq 10\), then there is a \((-2)\)-curve \(R'\) in this sequence with \(R'.F_1 = R'.F_2 = R'.F_3 = 0\). By \autoref{prop: G2exists}, this implies that \(R'\) is contained in two simple fibers, so \(X\) admits a special \(3\)-sequence by \autoref{rem: prop31inextraspecial}. If \(c = 10\), we have \(R.F_i \leq G_1.F_i \leq 2\) for \(i = 2,\dots,10\) and for every component \(R\) of every fiber \(G_1\) of \(|2F_1|\). In particular, if \(G_1\) has at least three components, then one of them satisfies \(R.F_i = 0\) for at least three \(i\), so we can argue as before.

Hence, we may assume that all fibers of all genus one fibrations on \(X\) have at most \(2\) components. By \cite[Theorem A.3]{LangEnriques2}, there exists a half-fiber \(F_1\) and a \((-2)\)-curve \(R\) with \(F_1.R = 1\). Extend \((F_1,F_1+R)\) to a \(c\)-degenerate \(10\)-sequence. By our assumption, we must have \(5 \leq c \leq 9\), so we can apply the argument of the previous paragraph to \(R\) and the claim is proved.
\end{proof}

{\centering 
\setlength{\tabcolsep}{4pt}


 
}

\vfill

\section{Non-extendable \texorpdfstring{\(3\)}{3}-sequences} \label{sec: non-ext.3-seq}

This section is structured as follows. 
In \autoref{sec: examples}, we present four examples of non-extendable \(3\)-sequences. The surfaces on which these \(3\)-sequences occur are of type~\(\tilde{A}_7,\tilde{E}_7^{(2)},\) and \(2\tilde{D}_4\). 
In \autoref{sec: special}, we prove \autoref{thm:non-extendable.special}, which shows that the first three examples are in fact the only special non-extendable \(3\)-sequences that can occur on an Enriques surface. 
In \autoref{sec: non-special}, we prove \autoref{thm:non-extendable.non-special}, which shows that the fourth example is the only type of non-special non-extendable \(3\)-sequence. 

Taken together, the results of this section give a complete classification of all non-extendable \(3\)-sequences and of all Enriques surfaces of non-degeneracy \(3\).

\begin{notation*}
Throughout this section, thick lines and dashed lines in the dual graphs indicate simple fibers and half-fibers, respectively.
\end{notation*}

\subsection{Examples} \label{sec: examples}
We will now present the four types of non-extendable \(3\)-sequences.
\begin{example} \label{example:A7}
Let \(X\) be an Enriques surface of type~\(\tilde{A}_7\). Recall that this means that \(X\) contains ten \((-2)\)-curves giving rise to the following dual graph:
\[
    \begin{tikzpicture}[scale=0.765]
\node (R1) at (180:2) [nodal,label=left:\(R_1\)] {};
\node (R2) at (135:2) [nodal,label=above left:\(R_2\)] {};
\node (R3) at (90:2) [nodal,label=above:\(R_3\)] {};
\node (R4) at (45:2) [nodal,label=above right:\(R_4\)] {};
\node (R5) at (0:2) [nodal,label=right:\(R_5\)] {};
\node (R6) at (315:2) [nodal,label=below right:\(R_6\)] {};
\node (R7) at (270:2) [nodal,label=below:\(R_7\)] {};
\node (R8) at (225:2) [nodal,label=below left:\(R_8\)] {};
\node (R9) at (intersection of R2--R7 and R3--R8) [nodal,label=\(R_9\)] {};
\node (R10) at (intersection of R4--R7 and R3--R6) [nodal,label=\(R_{10}\)] {};

\draw (R1)--(R2)--(R3)--(R4)--(R5)--(R6)--(R7)--(R8)--(R1)--(R9) (R5)--(R10);
    \end{tikzpicture}
\]
Consider the following half-fiber \(F_0\) and fibers \(G_1,G_2,G_3\) on~\(X\).

\[
        \begin{tikzpicture}[scale=0.6]
\node (R1) at (180:2) [nodal] {};
\node (R2) at (135:2) [nodal] {};
\node (R3) at (90:2) [nodal,label=above:\(F_0\)] {};
\node (R4) at (45:2) [nodal] {};
\node (R5) at (0:2) [nodal] {};
\node (R6) at (315:2) [nodal] {};
\node (R7) at (270:2) [nodal] {};
\node (R8) at (225:2) [nodal] {};
\node (R9) at (intersection of R2--R7 and R3--R8) [nodal, fill=white] {};
\node (R10) at (intersection of R4--R7 and R3--R6) [nodal, fill=white] {};
\draw[densely dashed, very thick] (R1)--(R2)--(R3)--(R4)--(R5)--(R6)--(R7)--(R8)--(R1);
\draw (R1)--(R9) (R5)--(R10);
    \end{tikzpicture}
    \qquad
    \begin{tikzpicture}[scale=0.6]
\node (R1) at (180:2) [nodal] {};
\node (R2) at (135:2) [nodal] {};
\node (R3) at (90:2) [nodal,label=above:\(G_1\)] {};
\node (R4) at (45:2) [nodal] {};
\node (R5) at (0:2) [nodal, fill=white] {};
\node (R6) at (315:2) [nodal] {};
\node (R7) at (270:2) [nodal] {};
\node (R8) at (225:2) [nodal] {};
\node (R9) at (intersection of R2--R7 and R3--R8) [nodal] {};
\node (R10) at (intersection of R4--R7 and R3--R6) [nodal,fill=white] {};
\draw [very thick] (R6)--(R7)--(R8)--(R1)--(R2)--(R3)--(R4) (R1)--(R9);
\draw (R5)--(R10) (R4)--(R5)--(R6);
    \end{tikzpicture}
    \qquad
    \begin{tikzpicture}[scale=0.6]
\node (R1) at (180:2) [nodal] {};
\node (R2) at (135:2) [nodal] {};
\node (R3) at (90:2) [nodal,label=above:\(G_2\)] {};
\node (R4) at (45:2) [nodal] {};
\node (R5) at (0:2) [nodal] {};
\node (R6) at (315:2) [nodal] {};
\node (R7) at (270:2) [nodal, fill=white] {};
\node (R8) at (225:2) [nodal] {};
\node (R9) at (intersection of R2--R7 and R3--R8) [nodal] {};
\node (R10) at (intersection of R4--R7 and R3--R6) [nodal] {};
\draw [very thick] (R1)--(R2)--(R3)--(R4)--(R5)-- (R10)--(R5)--(R6) (R9)--(R1)--(R8); \draw (R6)--(R7)--(R8);
    \end{tikzpicture}
    \qquad
        \begin{tikzpicture}[scale=0.6]
\node (R1) at (180:2) [nodal] {};
\node (R2) at (135:2) [nodal] {};
\node (R3) at (90:2) [nodal,label=above:\(G_3\)] {};
\node (R4) at (45:2) [nodal] {};
\node (R5) at (0:2) [nodal] {};
\node (R6) at (315:2) [nodal, fill=white] {};
\node (R7) at (270:2) [nodal] {};
\node (R8) at (225:2) [nodal] {};
\node (R9) at (intersection of R2--R7 and R3--R8) [nodal] {};
\node (R10) at (intersection of R4--R7 and R3--R6) [nodal] {};
\draw [very thick] (R1)--(R2)--(R3)--(R4)--(R5)--(R10) (R7)--(R8)--(R1)--(R9) (R5)--(R10);
\draw (R5)--(R6)--(R7);
    \end{tikzpicture}
\]
By \autoref{lem: genus.1.fibrations}, the existence of \(F_0\) implies that \(p \neq 2\), or \(p = 2\) and \(X\) is ordinary. The same lemma shows that \(G_1\), \(G_2\), and \(G_3\) are simple. Choose half-fibers \(F_1,F_2,F_3\) such that \(G_i \in |2F_i|\).

\begin{claim} \label{claim: non.extendable.A7}
The half-fibers \((F_1,F_2,F_3)\) form a special non-extendable \(3\)-sequence with
triangle graph of type \((E_7,D_8,A_1)\).
\end{claim}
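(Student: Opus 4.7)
The plan is to verify the four assertions—that $(F_1,F_2,F_3)$ is a 3-sequence, that it is special, that its triangle graph has the stated type, and that it is non-extendable—in turn, with everything reducing to explicit computations with Kodaira multiplicities in the given $(-2)$-curve configuration. First, \autoref{lem: genus.1.fibrations} implies that each of $G_1$ (of type $\tilde{E}_7$), $G_2$ (of type $\tilde{D}_8$), and $G_3$ (of type $\tilde{E}_8$) is of additive Kodaira type and therefore a simple fiber, so $G_i \sim 2F_i$ and $F_i \cdot F_j = \tfrac{1}{4} G_i \cdot G_j$. To compute $G_i \cdot G_j$ I would use the relation $G_i \cdot R = 0$ for any component $R$ of $G_i$: only the few components of $G_j$ missing from $G_i$ then contribute, and a direct calculation gives $G_i \cdot G_j = 4$, hence $F_i \cdot F_j = 1$.

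For specialness, the key observation is that $G_1 - R_6$ is supported on $\{R_1, R_2, R_3, R_4, R_7, R_8, R_9\}$ with the multiplicities of an $E_7$-fundamental cycle, $G_2 - R_6$ is supported on $\{R_1,\ldots,R_5, R_8, R_9, R_{10}\}$ with the multiplicities of a $D_8$-fundamental cycle, and component-by-component addition verifies the divisor identity $G_3 = (G_1 - R_6) + (G_2 - R_6)$. Substituting $G_i \equiv 2F_i$ and using that $\Num(X)$ is torsion-free yields $R_6 \equiv F_1 + F_2 - F_3$, so the sequence is special. By the uniqueness of the fundamental cycle established in \autoref{prop: propertiesofcharacteristiccycle}, this forces $S_3 = R_6$, $S_2 = G_1 - R_6$, and $S_1 = G_2 - R_6$, whose dual graphs are $A_1$, $E_7$, and $D_8$, respectively, giving the claimed triangle type $(E_7, D_8, A_1)$ up to permutation.

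The main step is non-extendability. Assume for contradiction that there exists a half-fiber $F_4$ with $F_4 \cdot F_i = 1$ for $i = 1, 2, 3$. Pairing with $S_i \equiv F_j + F_k - F_i$ gives $F_4 \cdot S_i = 1$ for each $i$, and since $F_4$ is nef by \autoref{lem: half-fiber}, we have $F_4 \cdot R \geq 0$ for every $(-2)$-curve $R$. Writing $F_4 \cdot S_2 = \sum_R c_R (F_4 \cdot R) = 1$ with all $c_R \geq 1$, and noting that $S_2$ of type $E_7$ has a unique multiplicity-$1$ component, namely $R_4$, the only solution is $F_4 \cdot R_4 = 1$ and $F_4 \cdot R = 0$ for every other component of $S_2$. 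The analogous identity for $S_1$ of type $D_8$, in which $R_4$ has multiplicity $2$, forces $F_4 \cdot R = 0$ for every component of multiplicity $\geq 2$, in particular $F_4 \cdot R_4 = 0$—a contradiction. The essential obstacle is careful bookkeeping of fundamental-cycle multiplicities; non-extendability itself is an immediate arithmetic consequence of $R_4$ having different multiplicities in $S_1$ and $S_2$.
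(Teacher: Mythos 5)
Your proposal is correct and follows essentially the same route as the paper's own proof: the $3$-sequence property via $F_i.F_j=\tfrac14 G_i.G_j$, specialness via the identity $G_1+G_2-G_3=2R_6$ and the resulting identification of $S_1,S_2,S_3$ with the fundamental cycles of type $D_8$, $E_7$, $A_1$, and non-extendability from the fact that $R_4$ is the unique simple component of $S_2$ but has multiplicity $2$ in $S_1$. The only point to keep in view is that "additive, hence simple" for the $G_i$ relies on $p\neq 2$ or $X$ ordinary, which comes from the existence of the multiplicative half-fiber $F_0$ established in the example's setup.
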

\begin{proof}
First, one checks that \(F_i.F_j = \frac{1}{4} G_i.G_j = 1 - \delta_{ij}\), so \((F_1,F_2,F_3)\) is indeed a \(3\)-sequence. Next, \(G_1 + G_2 - G_3 = 2R_6\), so \((F_1,F_2,F_3)\) is special. Similarly, one checks that the supports of \(G_2 + G_3 - G_1\) and \(G_1 + G_3 - G_2\) are of type \(D_8\) and \(E_7\) respectively, hence the type of the special \(3\)-sequence is \((E_7,D_8,A_1)\). It remains to prove that \((F_1,F_2,F_3)\) is non-extendable.

Seeking a contradiction, assume that there exists a half-fiber \(F_4\) with \(F_4.F_i = 1\) for \(i \in \{1,2,3\}\). 
Since \(F_4.(F_1+F_3-F_2)=1\) and \(S_2 \in |F_1+F_3-F_2|\) is supported on a configuration of \((-2)\)-curves of type \(E_7\), necessarily \(F_4\) intersects the only simple component of \(S_2\), namely \(R_4\). This, however, contradicts the facts that \(F_4.(F_2+F_3-F_1)=1\) and \(R_4\) appears with multiplicity \(2\) in \(S_1 \in |F_2+F_3-F_1|\).
\end{proof}

\begin{remark} \label{rem: max.nd.A7}
If \(X\) is of type~\(\tilde{A}_7\), then \( \max \nd(X) \geq 4\). Indeed, the following four half-fibers and simple fibers form a \(4\)-sequence:
\[
    \begin{tikzpicture}[scale=0.6]
\node (R1) at (180:2) [nodal] {};
\node (R2) at (135:2) [nodal] {};
\node (R3) at (90:2) [nodal] {};
\node (R4) at (45:2) [nodal] {};
\node (R5) at (0:2) [nodal] {};
\node (R6) at (315:2) [nodal] {};
\node (R7) at (270:2) [nodal] {};
\node (R8) at (225:2) [nodal] {};
\node (R9) at (intersection of R2--R7 and R3--R8) [nodal,fill=white] {};
\node (R10) at (intersection of R4--R7 and R3--R6) [nodal,fill=white] {};
\draw [very thick, densely dashed] (R1)--(R2)--(R3)--(R4)--(R5)--(R6)--(R7)--(R8)--(R1);
\draw  (R1)--(R9) (R5)--(R10);
    \end{tikzpicture}
\qquad 
    \begin{tikzpicture}[scale=0.6]
\node (R1) at (180:2) [nodal] {};
\node (R2) at (135:2) [nodal] {};
\node (R3) at (90:2) [nodal] {};
\node (R4) at (45:2) [nodal] {};
\node (R5) at (0:2) [nodal, fill=white] {};
\node (R6) at (315:2) [nodal] {};
\node (R7) at (270:2) [nodal] {};
\node (R8) at (225:2) [nodal] {};
\node (R9) at (intersection of R2--R7 and R3--R8) [nodal] {};
\node (R10) at (intersection of R4--R7 and R3--R6) [nodal,fill=white] {};
\draw [very thick] (R6)--(R7)--(R8)--(R1)--(R2)--(R3)--(R4) (R1)--(R9);
\draw (R5)--(R10) (R4)--(R5)--(R6);
    \end{tikzpicture}
\qquad 
    \begin{tikzpicture}[scale=0.6]
\node (R1) at (180:2) [nodal] {};
\node (R2) at (135:2) [nodal] {};
\node (R3) at (90:2) [nodal] {};
\node (R4) at (45:2) [nodal] {};
\node (R5) at (0:2) [nodal] {};
\node (R6) at (315:2) [nodal] {};
\node (R7) at (270:2) [nodal, fill=white] {};
\node (R8) at (225:2) [nodal] {};
\node (R9) at (intersection of R2--R7 and R3--R8) [nodal] {};
\node (R10) at (intersection of R4--R7 and R3--R6) [nodal] {};
\draw [very thick] (R1)--(R2)--(R3)--(R4)--(R5)-- (R10)--(R5)--(R6) (R9)--(R1)--(R8); \draw (R6)--(R7)--(R8);
    \end{tikzpicture}
    \qquad
     \begin{tikzpicture}[scale=0.6]
\node (R1) at (180:2) [nodal] {};
\node (R2) at (135:2) [nodal] {};
\node (R3) at (90:2) [nodal, fill=white] {};
\node (R4) at (45:2) [nodal] {};
\node (R5) at (0:2) [nodal] {};
\node (R6) at (315:2) [nodal] {};
\node (R7) at (270:2) [nodal] {};
\node (R8) at (225:2) [nodal] {};
\node (R9) at (intersection of R2--R7 and R3--R8) [nodal] {};
\node (R10) at (intersection of R4--R7 and R3--R6) [nodal] {};
\draw [very thick] (R1)--(R8)--(R7)--(R6)--(R5) (R10)--(R5)--(R4) (R2)--(R1)--(R9);
\draw (R2)--(R3)--(R4);
    \end{tikzpicture}
\]
\end{remark}

\end{example}

\begin{example}\label{example:BP}
Let \(X\) be an Enriques surface of type~\(\tilde{A}_7\). Using the notation of \autoref{example:A7}, the curve \(R_{10}\) is a component of a fiber of \(|G_1|\). By \cite[Theorem~3.1]{Martin}, we get one of the following two graphs according to whether this fiber is a half-fiber or a simple fiber.
\[
\begin{tikzpicture}
    \begin{scope}[scale=0.7]
    \node at (270:3.8) {(type~\(\I\))};
\node (R1) at (180:2.5) [nodal] {};
\node (R2) at (135:2.5) [nodal] {};
\node (R3) at (90:2.5) [nodal] {};
\node (R4) at (45:2.5) [nodal] {};
\node (R5) at (0:2.5) [nodal] {};
\node (R6) at (315:2.5) [nodal] {};
\node (R7) at (270:2.5) [nodal] {};
\node (R8) at (225:2.5) [nodal] {};
\node (R9) at (-1.5,0) [nodal] {};
\node (R10) at (1.5,0) [nodal] {};
\node (R11) at (-0.5,0) [nodal] {};
\node (R12) at (0.5,0) [nodal] {};
\draw (R6)--(R7)--(R8)--(R1)--(R2)--(R3)--(R4) (R1)--(R9) (R5)--(R10) (R4)--(R5)--(R6);
\draw[double] (R9)--(R11)--(R12)--(R10);
    \end{scope}
    \end{tikzpicture}
    \qquad \qquad
    \begin{tikzpicture}
        \begin{scope}[scale=0.7]
       \node at (270:3.8) {(type~\BarthPeters)};
\node (R1) at (180:2.5) [nodal,label=left:\(R_1\)] {};
\node (R2) at (135:2.5) [nodal,label=above left:\(R_2\)] {};
\node (R3) at (90:2.5) [nodal,label=above:\(R_3\)] {};
\node (R4) at (45:2.5) [nodal,label=above right:\(R_4\)] {};
\node (R5) at (0:2.5) [nodal,label=right:\(R_5\)] {};
\node (R6) at (315:2.5) [nodal,label=below right:\(R_6\)] {};
\node (R7) at (270:2.5) [nodal,label=below:\(R_7\)] {};
\node (R8) at (225:2.5) [nodal,label=below left:\(R_8\)] {};
\node (R9) at (-1,0) [nodal,label=above:\(R_9\)] {};
\node (R10) at (1,1) [nodal,label=left:\(R_{10}\)] {};
\node (R11) at (1,-1) [nodal,label=left:\(R_{11}\)] {};
\draw (R6)--(R7)--(R8)--(R1)--(R2)--(R3)--(R4) (R1)--(R9) (R5)--(R10) (R4)--(R5)--(R6) (R5)--(R11);
\draw[double] (R11)--(R10);
\end{scope}
\end{tikzpicture}
\]

An Enriques surface containing the left configuration of \((-2)\)-curves has finite automorphism group and occurs as type~\(\I\) in Kondo's list \cite{Kondo:Enriques.finite.aut}. Enriques surfaces containing the right configuration of \((-2)\)-curves were first studied by Barth and Peters \cite{Barth.Peters} as examples of Enriques surfaces with small (but infinite) automorphism group. Now, assume that \(X\) is of type {\BarthPeters} and, overriding previous notation, consider the following three simple fibers \(G_1,G_2\) and \(G_3\):
\[
    \begin{tikzpicture}[scale=0.6]
\node (R1) at (180:2) [nodal, fill=white] {};
\node (R2) at (135:2) [nodal, fill=white] {};
\node (R3) at (90:2) [nodal, fill=white, label=above:\(G_1\)] {};
\node (R4) at (45:2) [nodal, fill=white] {};
\node (R5) at (0:2) [nodal, fill=white] {};
\node (R6) at (315:2) [nodal, fill=white] {};
\node (R7) at (270:2) [nodal, fill=white] {};
\node (R8) at (225:2) [nodal, fill=white] {};
\node (R9) at (-0.75,0) [nodal, fill=white] {};
\node (R10) at (0.75,0.85) [nodal] {};
\node (R11) at (0.75,-0.85) [nodal] {};
\draw (R6)--(R7)--(R8)--(R1)--(R2)--(R3)--(R4) (R1)--(R9);
\draw (R5)--(R10) (R4)--(R5)--(R6) (R5)--(R11);
\draw[double,very thick] (R10)--(R11);
    \end{tikzpicture}
    \qquad
    \begin{tikzpicture}[scale=0.6]
\node (R1) at (180:2) [nodal] {};
\node (R2) at (135:2) [nodal] {};
\node (R3) at (90:2) [nodal, label=above:\(G_2\)] {};
\node (R4) at (45:2) [nodal] {};
\node (R5) at (0:2) [nodal] {};
\node (R6) at (315:2) [nodal, fill=white] {};
\node (R7) at (270:2) [nodal] {};
\node (R8) at (225:2) [nodal] {};
\node (R9) at (-0.75,0) [nodal] {};
\node (R10) at (0.75,0.85) [nodal,fill=white] {};
\node (R11) at (0.75,-0.85) [nodal] {};
\draw [very thick] (R1)--(R2)--(R3)--(R4)--(R5) (R7)--(R8)--(R1)--(R9)  (R5)--(R11);
\draw (R5)--(R6)--(R7) (R5)--(R10);
\draw[double] (R10)--(R11);
    \end{tikzpicture}
    \qquad
        \begin{tikzpicture}[scale=0.6]
\node (R1) at (180:2) [nodal] {};
\node (R2) at (135:2) [nodal] {};
\node (R3) at (90:2) [nodal, label=above:\(G_3\)] {};
\node (R4) at (45:2) [nodal] {};
\node (R5) at (0:2) [nodal] {};
\node (R6) at (315:2) [nodal, fill=white] {};
\node (R7) at (270:2) [nodal] {};
\node (R8) at (225:2) [nodal] {};
\node (R9) at (-0.75,0) [nodal] {};
\node (R10) at (0.75,0.85) [nodal] {};
\node (R11) at (0.75,-0.85) [nodal,fill=white] {};
\draw [very thick] (R1)--(R2)--(R3)--(R4)--(R5)--(R10) (R7)--(R8)--(R1)--(R9) ;
\draw (R5)--(R6)--(R7) (R5)--(R11);
\draw[double] (R10)--(R11);
    \end{tikzpicture}
\]
Choose half-fibers \(F_1,F_2,F_3\) such that \(G_i \in |2F_i|\).

\begin{claim}
The half-fibers \((F_1,F_2,F_3)\) form a special non-extendable \(3\)-sequence with triangle graph of type \((E_8,A_1,A_1)\).
\end{claim}
\begin{proof}
First, one checks that \(F_i.F_j = \frac{1}{4} G_i.G_j = 1 - \delta_{ij}\), so \((F_1,F_2,F_3)\) is indeed a \(3\)-sequence. Next, \(G_1 + G_2 - G_3 = 2R_{11}\), so \((F_1,F_2,F_3)\) is special. Similarly, one checks that the supports of \(G_2 + G_3 - G_1\) and \(G_1 + G_3 - G_2\) are of type \(E_8\) and \(A_1\) respectively, hence the type of the special \(3\)-sequence is \((E_8,A_1,A_1)\). It remains to prove that \((F_1,F_2,F_3)\) is non-extendable.

If \(F_4\) is a half-fiber with \(F_4.F_i = 1\) for \(i \in \{1,2,3\}\), then we have \(F_4.(F_2+F_3-F_1) = 1\). However this contradicts the fact that \(S_1 \in |F_2+F_3-F_1|\) has no simple component, since the support of \(S_1\) is a configuration of \((-2)\)-curves of type \(E_8\).
\end{proof}

\begin{remark}
An Enriques surface \(X\) of type {\BarthPeters} is in particular of type~\(\tilde A_7\), so \(\max \nd(X) \geq 4\), cf. \autoref{rem: max.nd.A7}.
\end{remark}

\end{example}

\begin{example}\label{example:E7(2)}
Let \(X\) be an Enriques surface of type~\(\tilde{E}_7^{(2)}\). Recall that this means that \(X\) contains eleven \((-2)\)-curves giving rise to the following dual graph:
\[
\begin{tikzpicture}[scale=0.6]
    \node (R1) at (0,1) [nodal] {};
    \node (R2) at (-3,0) [nodal] {};
    \node (R3) at (-2,0) [nodal] {};
    \node (R4) at (-1,0) [nodal] {};
    \node (R5) at (0,0) [nodal] {};
    \node (R6) at (1,0) [nodal] {};
    \node (R7) at (2,0) [nodal] {};
    \node (R8) at (3,0) [nodal] {};
    \node (R9) at (4,0) [nodal] {};
    \node (RX) at (5,0) [nodal, label=below:\(R\)] {};
    \node (R11) at (6,0) [nodal] {};
    \draw (R2)--(R3)--(R4)--(R5) (R1)--(R5)--(RX);
    \draw [double] (RX)--(R11);
    \draw (R9) to[bend left=60] (R11);
\end{tikzpicture}
\]
Such a surface is a classical or supersingular Enriques surface defined over a field of characteristic~\(2\), with finite automorphism group \cite[Theorems~11.4 and 11.12]{Katsura.Kondo.Martin}. There are no other \((-2)\)-curves on~\(X\). In particular, there are only three genus one fibrations on~\(X\).

Choose fibers \(G_1,G_2,G_3\) as follows:
\[
\begin{tikzpicture}[scale=0.6]
    \node (R1) at (0,1) [nodal, fill=white, label=above:\(G_1\)] {};
    \node (R2) at (-3,0) [nodal, fill=white] {};
    \node (R3) at (-2,0) [nodal, fill=white] {};
    \node (R4) at (-1,0) [nodal, fill=white] {};
    \node (R5) at (0,0) [nodal, fill=white] {};
    \node (R6) at (1,0) [nodal, fill=white] {};
    \node (R7) at (2,0) [nodal, fill=white] {};
    \node (R8) at (3,0) [nodal, fill=white] {};
    \node (R9) at (4,0) [nodal, fill=white] {};
    \node (RX) at (5,0) [nodal] {};
    \node (R11) at (6,0) [nodal] {};
    \draw (R2)--(R3)--(R4)--(R5)--(R6)--(R7)--(R8) (R1)--(R5);
    \draw[double, very thick] (RX)--(R11);
    \draw (R9) to[bend left=60] (R11) (R8)--(R9)--(RX);
\end{tikzpicture}
\]\[
\begin{tikzpicture}[scale=0.6]
    \node (R1) at (0,1) [nodal, label=above:\(G_2\)] {};
    \node (R2) at (-3,0) [nodal,fill=white] {};
    \node (R3) at (-2,0) [nodal] {};
    \node (R4) at (-1,0) [nodal] {};
    \node (R5) at (0,0) [nodal] {};
    \node (R6) at (1,0) [nodal] {};
    \node (R7) at (2,0) [nodal] {};
    \node (R8) at (3,0) [nodal] {};
    \node (R9) at (4,0) [nodal] {};
    \node (RX) at (5,0) [nodal] {};
    \node (R11) at (6,0) [nodal, fill=white] {};
    \draw[very thick] (R3)--(R4)--(R5)--(R6)--(R7)--(R8)--(R9)--(RX) (R1)--(R5);
    \draw[double] (RX)--(R11);
    \draw (R2)--(R3) (R9) to[bend left=60] (R11);
\end{tikzpicture} \qquad
\begin{tikzpicture}[scale=0.6]
    \node (R1) at (0,1) [nodal, label=above:\(G_3\)] {};
    \node (R2) at (-3,0) [nodal,fill=white] {};
    \node (R3) at (-2,0) [nodal] {};
    \node (R4) at (-1,0) [nodal] {};
    \node (R5) at (0,0) [nodal] {};
    \node (R6) at (1,0) [nodal] {};
    \node (R7) at (2,0) [nodal] {};
    \node (R8) at (3,0) [nodal] {};
    \node (R9) at (4,0) [nodal] {};
    \node (RX) at (5,0) [nodal, fill=white] {};
    \node (R11) at (6,0) [nodal] {};
    \draw[very thick] (R3)--(R4)--(R5)--(R6)--(R7)--(R8)--(R9) (R1)--(R5) (R9) to[bend left=60] (R11);
    \draw[double] (RX)--(R11);
    \draw (R2)--(R3) (R9)--(RX);
\end{tikzpicture}
\]
Choose half-fibers \(F_1,F_2,F_3\) such that the corresponding fibrations have the \(G_i\) as fibers.

\begin{claim}
The half-fibers \((F_1,F_2,F_3)\) form a special non-extendable \(3\)-sequence with triangle graph of type \((E_8,A_1,A_1)\).
\end{claim}
\begin{proof}
Since \(X\) is not extra-special, and \(|2F_1|,|2F_2|,|2F_3|\) are the only genus one fibrations on~\(X\), they must form a non-extendable \(3\)-sequence. In particular, the \(G_i\) are simple fibers because \(G_i.G_j = 4\). From \(G_1 + G_2 - G_3 = 2R\), we infer that \((F_1,F_2,F_3)\) is special.
\end{proof}
\begin{remark} \label{rem: nd.E7^(2)}
If \(X\) is of type~\(\tilde{E}_7^{(2)}\), then \(\min \nd(X) = \max \nd(X) = 3\). Indeed, the surface \(X\) contains only \(3\) genus one fibrations and the corresponding half-fibers form a \(3\)-sequence.
\end{remark}

\end{example}

\begin{example}\label{example:2D4}
Let \(X\) be an Enriques surface of type~\(2\tilde{D}_4\). Recall that this means that \(X\) contains eleven \((-2)\)-curves giving rise to the following dual graph:
\[
    \begin{tikzpicture}[scale=0.6]
\node (R0) at (-2, 0) [nodal] {};
\node (R1) at (-2,-1) [nodal] {};
\node (R2) at (-3, 0) [nodal] {};
\node (R3) at (-2, 1) [nodal] {};
\node (R4) at (-1, 0) [nodal] {};
\node (R5) at ( 0, 0) [nodal] {};
\node (R6) at ( 1, 0) [nodal] {};
\node (R7) at ( 2,-1) [nodal] {};
\node (R8) at ( 2, 1) [nodal] {};
\node (R9) at ( 3, 0) [nodal] {};
\node (R10) at (2, 0) [nodal] {};
\draw (R0)--(R4)--(R5)--(R6)--(R10) (R1)--(R0) (R2)--(R0) (R3)--(R0) (R7)--(R10) (R8)--(R10) (R9)--(R10);
    \end{tikzpicture}
\]
Such a surface is a classical Enriques surface defined over a field of characteristic~\(2\), with finite automorphism group \cite[Theorem~13.4]{Katsura.Kondo.Martin}. There are no other \((-2)\)-curves on~\(X\).
There are exactly \(10\) genus one fibrations on~\(X\), one with two half-fibers \(F_0,F_0'\) of type~\(\I_0^*\), and the others with a (half-)fiber of type~\(\I_4^*\). 

By \autoref{thm: non-degeneracy.2}, \(F_0\) extends to a \(3\)-sequence \((F_0,F_1,F_2)\). Since all configurations of type~\(\I_4^*\) on~\(X\) have intersection number~\(2\) with \(F_0\), this means that there exist simple fibers \(G_1 \in |2F_1|\) and \(G_2 \in |2F_2|\) of type~\(\I_4^*\). The sequence \((F_0,F_1,F_2)\) is non-special by \autoref{prop: propertiesofcharacteristiccycle}, since \(|2F_0|\) contains no reducible simple fibers. Hence, we can apply \autoref{rem: prop31inextraspecial} to find a half-fiber \(F_3\) different from \(F_0\) and \(F_0'\) and such that \((F_1,F_2,F_3)\) is a special \(3\)-sequence. In particular, \(|2F_3|\) must admit a reducible simple fiber \(G_3\), necessarily of type~\(\I_4^*\). There is a unique configuration of type~\(\I_4^*\) which has intersection number \(4\) with both \(G_1\) and \(G_2\), so this is our \(G_3\), and it turns out that \((F_0,F_1,F_2,F_3)\) is in fact a \(4\)-sequence: 
\[
    \begin{tikzpicture}[scale=0.6]
\node (R0) at (-2, 0) [nodal] {};
\node (R1) at (-2,-1) [nodal] {};
\node (R2) at (-3, 0) [nodal, label=left:\(F_0\)] {};
\node (R3) at (-2, 1) [nodal] {};
\node (R4) at (-1, 0) [nodal] {};
\node (R5) at ( 0, 0) [nodal, fill=white] {};
\node (R6) at ( 1, 0) [nodal, fill=white] {};
\node (R7) at ( 2,-1) [nodal, fill=white] {};
\node (R8) at ( 2, 1) [nodal, fill=white] {};
\node (R9) at ( 3, 0) [nodal, fill=white] {};
\node (R10) at (2, 0) [nodal, fill=white] {};
\draw [very thick,densely dashed] (R0)--(R4) (R1)--(R0) (R3)--(R0) (R2)--(R0);
\draw (R4)--(R5)--(R6) (R6)--(R10) (R7)--(R10) (R9)--(R10)  (R8)--(R10);
    \end{tikzpicture}
    \qquad
    \begin{tikzpicture}[scale=0.6]
\node (R0) at (-2, 0) [nodal] {};
\node (R1) at (-2,-1) [nodal, fill=white] {};
\node (R2) at (-3, 0) [nodal] {};
\node (R3) at (-2, 1) [nodal] {};
\node (R4) at (-1, 0) [nodal] {};
\node (R5) at ( 0, 0) [nodal] {};
\node (R6) at ( 1, 0) [nodal] {};
\node (R7) at ( 2,-1) [nodal, fill=white] {};
\node (R8) at ( 2, 1) [nodal] {};
\node (R9) at ( 3, 0) [nodal, label=right:\(G_1\)] {};
\node (R10) at (2, 0) [nodal] {};
\draw [very thick] (R0)--(R4)--(R5)--(R6)--(R10) (R2)--(R0) (R3)--(R0) (R8)--(R10) (R9)--(R10);
\draw (R1)--(R0) (R7)--(R10);
    \end{tikzpicture}
\]
\[
    \begin{tikzpicture}[scale=0.6]
\node (R0) at (-2, 0) [nodal] {};
\node (R1) at (-2,-1) [nodal] {};
\node (R2) at (-3, 0) [nodal, label=left:\(G_2\)] {};
\node (R3) at (-2, 1) [nodal, fill=white] {};
\node (R4) at (-1, 0) [nodal] {};
\node (R5) at ( 0, 0) [nodal] {};
\node (R6) at ( 1, 0) [nodal] {};
\node (R7) at ( 2,-1) [nodal] {};
\node (R8) at ( 2, 1) [nodal, fill=white] {};
\node (R9) at ( 3, 0) [nodal] {};
\node (R10) at (2, 0) [nodal] {};
\draw [very thick] (R0)--(R4)--(R5)--(R6)--(R10) (R1)--(R0) (R2)--(R0) (R7)--(R10) (R9)--(R10);
\draw (R3)--(R0) (R8)--(R10) ;
    \end{tikzpicture}
    \qquad
    \begin{tikzpicture}[scale=0.6]
\node (R0) at (-2, 0) [nodal] {};
\node (R1) at (-2,-1) [nodal] {};
\node (R2) at (-3, 0) [nodal, fill=white] {};
\node (R3) at (-2, 1) [nodal] {};
\node (R4) at (-1, 0) [nodal] {};
\node (R5) at ( 0, 0) [nodal] {};
\node (R6) at ( 1, 0) [nodal] {};
\node (R7) at ( 2,-1) [nodal] {};
\node (R8) at ( 2, 1) [nodal] {};
\node (R9) at ( 3, 0) [nodal, fill=white, label=right:\(G_3\)] {};
\node (R10) at (2, 0) [nodal] {};
\draw [very thick] (R0)--(R4)--(R5)--(R6)--(R10) (R1)--(R0) (R7)--(R10) (R3)--(R0) (R8)--(R10);
\draw (R2)--(R0) (R9)--(R10);
    \end{tikzpicture}
\]
Every other configuration of type~\(\I_4^*\) on~\(X\) has intersection number \(2\) with at least one of the \(G_i\). Hence, the following configurations are all half-fibers:
\[
   \begin{tikzpicture}[scale=0.6]
    \node (R0) at (-2, 0) [nodal] {};
\node (R1) at (-2,-1) [nodal] {};
\node (R2) at (-3, 0) [nodal, label=left:\(F_4\)] {};
\node (R3) at (-2, 1) [nodal, fill=white] {};
\node (R4) at (-1, 0) [nodal] {};
\node (R5) at ( 0, 0) [nodal] {};
\node (R6) at ( 1, 0) [nodal] {};
\node (R7) at ( 2,-1) [nodal, fill=white] {};
\node (R8) at ( 2, 1) [nodal] {};
\node (R9) at ( 3, 0) [nodal] {};
\node (R10) at (2, 0) [nodal] {};
\draw [very thick, densely dashed] (R0)--(R4)--(R5)--(R6)--(R10) (R2)--(R0) (R1)--(R0) (R9)--(R10) (R8)--(R10);
\draw (R7)--(R10)  (R3)--(R0);
    \end{tikzpicture}
    \qquad
    \begin{tikzpicture}[scale=0.6]
    \node (R0) at (-2, 0) [nodal] {};
\node (R1) at (-2,-1) [nodal, fill=white] {};
\node (R2) at (-3, 0) [nodal] {};
\node (R3) at (-2, 1) [nodal] {};
\node (R4) at (-1, 0) [nodal] {};
\node (R5) at ( 0, 0) [nodal] {};
\node (R6) at ( 1, 0) [nodal] {};
\node (R7) at ( 2,-1) [nodal] {};
\node (R8) at ( 2, 1) [nodal, fill=white] {};
\node (R9) at ( 3, 0) [nodal, label=right:\(F_5\)] {};
\node (R10) at (2, 0) [nodal] {};
\draw [very thick, densely dashed] (R0)--(R4)--(R5)--(R6)--(R10) (R2)--(R0) (R3)--(R0) (R7)--(R10) (R9)--(R10);
\draw (R1)--(R0) (R8)--(R10);
    \end{tikzpicture}
\]
\[
    \begin{tikzpicture}[scale=0.6]
    \node (R0) at (-2, 0) [nodal] {};
\node (R1) at (-2,-1) [nodal] {};
\node (R2) at (-3, 0) [nodal, fill=white, label=left:\(F_6\)] {};
\node (R3) at (-2, 1) [nodal] {};
\node (R4) at (-1, 0) [nodal] {};
\node (R5) at ( 0, 0) [nodal] {};
\node (R6) at ( 1, 0) [nodal] {};
\node (R7) at ( 2,-1) [nodal] {};
\node (R8) at ( 2, 1) [nodal, fill=white] {};
\node (R9) at ( 3, 0) [nodal] {};
\node (R10) at (2, 0) [nodal] {};
\draw [very thick, densely dashed] (R0)--(R4)--(R5)--(R6)--(R10) (R1)--(R0)  (R3)--(R0) (R7)--(R10) (R9)--(R10);
\draw (R8)--(R10) (R2)--(R0);
    \end{tikzpicture}
\qquad
    \begin{tikzpicture}[scale=0.6]
    \node (R0) at (-2, 0) [nodal] {};
\node (R1) at (-2,-1) [nodal] {};
\node (R2) at (-3, 0) [nodal, fill=white] {};
\node (R3) at (-2, 1) [nodal] {};
\node (R4) at (-1, 0) [nodal] {};
\node (R5) at ( 0, 0) [nodal] {};
\node (R6) at ( 1, 0) [nodal] {};
\node (R7) at ( 2,-1) [nodal, fill=white] {};
\node (R8) at ( 2, 1) [nodal] {};
\node (R9) at ( 3, 0) [nodal, label=right:\(F_7\)] {};
\node (R10) at (2, 0) [nodal] {};
\draw [very thick, densely dashed] (R0)--(R4)--(R5)--(R6)--(R10) (R1)--(R0) (R3)--(R0) (R8)--(R10) (R9)--(R10);
\draw (R2)--(R0)  (R7)--(R10);
\end{tikzpicture}
\]
\[
    \begin{tikzpicture}[scale=0.6]
    \node (R0) at (-2, 0) [nodal] {};
\node (R1) at (-2,-1) [nodal] {};
\node (R2) at (-3, 0) [nodal, label=left:\(F_8\)] {};
\node (R3) at (-2, 1) [nodal, fill=white] {};
\node (R4) at (-1, 0) [nodal] {};
\node (R5) at ( 0, 0) [nodal] {};
\node (R6) at ( 1, 0) [nodal] {};
\node (R7) at ( 2,-1) [nodal] {};
\node (R8) at ( 2, 1) [nodal] {};
\node (R9) at ( 3, 0) [nodal, fill=white] {};
\node (R10) at (2, 0) [nodal] {};
\draw [very thick, densely dashed] (R0)--(R4)--(R5)--(R6)--(R10) (R2)--(R0) (R1)--(R0) (R7)--(R10) (R8)--(R10);
\draw (R3)--(R0) (R9)--(R10);
    \end{tikzpicture}
    \qquad     
    \begin{tikzpicture}[scale=0.6]
    \node (R0) at (-2, 0) [nodal] {};
\node (R1) at (-2,-1) [nodal, fill=white] {};
\node (R2) at (-3, 0) [nodal] {};
\node (R3) at (-2, 1) [nodal] {};
\node (R4) at (-1, 0) [nodal] {};
\node (R5) at ( 0, 0) [nodal] {};
\node (R6) at ( 1, 0) [nodal] {};
\node (R7) at ( 2,-1) [nodal] {};
\node (R8) at ( 2, 1) [nodal] {};
\node (R9) at ( 3, 0) [nodal, fill=white, label=right:\(F_9\)] {};
\node (R10) at (2, 0) [nodal] {};
\draw [very thick, densely dashed] (R0)--(R4)--(R5)--(R6)--(R10) (R2)--(R0) (R3)--(R0) (R8)--(R10) (R7)--(R10);
\draw (R1)--(R0) (R9)--(R10);
    \end{tikzpicture}
\]

\begin{claim}
For each \(4 \leq i \leq 9\), there is a unique \(3\)-sequence containing \(F_i\). This \(3\)-sequence is non-special and non-extendable. In particular, \(\nd(F_i) = 3\).
\end{claim}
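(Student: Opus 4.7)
The plan is to compute the full intersection matrix among the half-fibers on $X$ and analyze the resulting combinatorics. Since $X$ has exactly $10$ genus one fibrations and the two half-fibers of each fibration are numerically equivalent, there are exactly $10$ numerical classes of half-fibers, namely $[F_0], [F_1], \ldots, [F_9]$. The nine $\I_4^*$-configurations of $(-2)$-curves on $X$ are parameterized by pairs $(a, b) \in \{1, 2, 3\} \times \{7, 8, 9\}$ of excluded horn curves, which I denote by $D_{a, b}$. Then $G_1 = D_{1,7}$, $G_2 = D_{3,8}$, $G_3 = D_{2,9}$ are the three simple $\I_4^*$ fibers satisfying $G_j \sim 2F_j$, while for $j \in \{4, \ldots, 9\}$ the half-fiber $F_j$ equals the $\I_4^*$ Kodaira divisor $D_{a_j, b_j}$ itself.

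A direct calculation using $D \cdot R = 0$ for a component $R$ of a fiber $D$ shows that $D_{a, b} \cdot D_{a', b'}$ equals $4$, $2$, or $0$ depending on whether $(a, b)$ and $(a', b')$ share $0$, $1$, or $2$ coordinates. From this and the analogous computation for $F_0$, I obtain $F_i \cdot F_0 = F_i \cdot F_0' = 2$ for every $i \in \{4, \ldots, 9\}$, and $F_i \cdot F_j = 1$ precisely when $j \in \{1, 2, 3\}$ and $(a_j, b_j)$ shares exactly one coordinate with $(a_i, b_i)$. Since $(a_i, b_i) \notin \{(1, 7), (3, 8), (2, 9)\}$ for $i \geq 4$, exactly two of $F_1, F_2, F_3$---call them $F_p$ and $F_q$---satisfy this condition. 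Moreover $G_p \cdot G_q = 4$, so $F_p \cdot F_q = 1$, and thus $(F_i, F_p, F_q)$ is a $3$-sequence which, by the preceding enumeration, is the unique $3$-sequence containing $F_i$ at the level of numerical classes.

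The remaining assertions are formal. Non-extendability: a fourth half-fiber $F'$ in a $4$-sequence with $(F_i, F_p, F_q)$ would satisfy $F' \cdot F_i = 1$, forcing $[F'] \in \{[F_p], [F_q]\}$; but then $F' \cdot F_p = 0$ or $F' \cdot F_q = 0$ since two half-fibers of the same fibration intersect trivially, contradicting the $4$-sequence conditions. Non-speciality: by \autoref{prop: propertiesofcharacteristiccycle}, if the sequence were special, there would exist an effective characteristic cycle $S \equiv F_p + F_q - F_i$ with $S \cdot F_i = 2$ and $S \cdot F_p = S \cdot F_q = 0$. The vanishing conditions confine the support of $S$ to $(-2)$-curve components common to $G_p$ and $G_q$ (the other fibers of $|2F_p|$ and $|2F_q|$ being irreducible). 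However, these common components span the spine $\{R_0, R_4, R_5, R_6, R_{10}\}$ together with precisely the two horns shared by $G_p$ and $G_q$, and coordinate analysis shows these two horns lie in the support of $F_i = D_{a_i, b_i}$. Hence $S \cdot F_i = 0$, a contradiction. Therefore $(F_i, F_p, F_q)$ is a non-special, non-extendable $3$-sequence, and $\nd(F_i) = 3$. The main technical step is the coordinate bookkeeping confirming that the two shared horns of $G_p$ and $G_q$ always belong to $F_i$; this follows from the fact that these shared horns correspond exactly to the two coordinates of $(a_i, b_i)$ matching $(a_p, b_p)$ and $(a_q, b_q)$.
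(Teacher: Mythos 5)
Your proof is correct, and its core — computing the pairwise intersection numbers of the ten numerical half-fiber classes via the parameterization \(D_{a,b}\), and observing that for each \(i\ge 4\) exactly two of \(F_1,F_2,F_3\) meet \(F_i\) with multiplicity one while every other class meets it with multiplicity at least two — is exactly the paper's argument (the paper only writes out the computation for \(F_4\) and invokes the symmetry of the graph). Where you genuinely diverge is non-speciality: the paper disposes of it in one line by pairing the putative effective class against a nef half-fiber, namely \((F_1+F_2-F_4).F_5 = 1+1-4 = -2 < 0\), whereas you run a support analysis on the characteristic cycle \(S\equiv F_p+F_q-F_i\), confining it to the common components of \(G_p\) and \(G_q\) and showing these all lie in the support of \(F_i\), whence \(S.F_i=0\neq 2\). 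Your route is valid — the key input, that \(G_p\) is the only reducible fiber of \(|2F_p|\) because all eleven \((-2)\)-curves are accounted for (nine in \(G_p\), two as bisections), is correct — but it is longer than necessary; the nef-pairing trick is worth remembering. One small slip in your last sentence: the shared horns of \(G_p\) and \(G_q\) are not "the two coordinates of \((a_i,b_i)\) matching \((a_p,b_p)\) and \((a_q,b_q)\)" but the complementary ones, i.e.\ the elements of \(\{1,2,3\}\setminus\{a_p,a_q\}\) and \(\{7,8,9\}\setminus\{b_p,b_q\}\); since \(a_i\in\{a_p,a_q\}\) and \(b_i\in\{b_p,b_q\}\), these are precisely the horns \emph{not} excluded from \(F_i\), which is what you need. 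The conclusion stands.
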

\begin{proof}
By the symmetry of the graph, it suffices to prove the statement for \(F_4\). We have \(F_4.F_j \geq 2\) for \(j \neq 1,2\) and \(F_4.F_1 = F_4.F_2 = 1\). 
Hence, \((F_1,F_2,F_4)\) is the unique \(3\)-sequence that extends~\(F_4\). Since \((F_1 + F_2 - F_4).F_5 = -2 < 0\) and \(F_5\) is nef, \((F_1,F_2,F_4)\) is non-special.
\end{proof}

\begin{remark} \label{rem: nd.2D4}
If \(X\) is of type~\(2\tilde{D}_4\), then \(\min \nd(X) = 3\) and \(\max \nd (X) = 4\). Indeed, the half-fibers \((F_0,F_1,F_2,F_3)\) form a \(4\)-sequence.
\end{remark}
\end{example}

\begin{example}
Enriques surfaces with finite automorphism group are classified into 14 types, according to the dual graph associated to the \((-2)\)-curves that they contain, some of which have already been introduced in the previous examples \cite{Katsura.Kondo.Martin, Kondo:Enriques.finite.aut, Martin}.
Every genus one fibration on an Enriques surface~\(X\) with \(|{\Aut(X)}| < \infty\) has at least one reducible fiber. Consequently, one can list all genus one fibrations on~\(X\), and compute the exact values of \(\min \nd (X)\) and \(\max \nd(X)\) (cf.~\cite{DolgachevKondoBook, Moschetti.Rota.Schaffler}). 
For the sake of completeness, these values are displayed in \autoref{tab:nd.finite.Aut(X)}, although we will never use this table in the proofs of the present paper.
\begin{table}[h]
\caption{Non-degeneracy of Enriques surfaces with finite automorphism group.}
\label{tab:nd.finite.Aut(X)}
\begin{tabular}{l cccccccc cccccc}
\toprule
 & \(\I\) & \(\II\) & \(\III\) & \(\IV\) & \(\V\) & \(\VI\) & \(\VII\) & \(\VIII\) & \(\tilde{E}_8\) & \(\tilde{D}_8\) & \(\tilde{E}_7\) & \(\tilde{E}_7^{(2)}\) & \(2\tilde{D}_4\) & \(\tilde{E}_6\) \\
 \cmidrule{2-15}
    \(\min \nd(X)\) & \(3\) & \(4\) & \(5\) & \(6\)  & \(5\) & \(7\) & \(7\) & \(4\) & \(1\) & \(2\) & \(2\) & \(3\) & \(3\) & \(4\) \\
    \(\max \nd(X)\) & \(4\) & \(7\) & \(8\) & \(10\) & \(7\) & \(10\) & \(10\) & \(7\) & \(1\) & \(2\) & \(2\) & \(3\) & \(4\) & \(4\) \\
 \bottomrule
\end{tabular}
\end{table}
\end{example}

\subsection{Special non-extendable \texorpdfstring{\(3\)}{3}-sequences} 
\label{sec: special}

In \autoref{sec: examples}, we provided three types of special non-extendable \(3\)-sequences. The aim of this section is to prove the following result, which says that our list exhausts all possible examples:
\begin{theorem} \label{thm:non-extendable.special}
If \((F_1,F_2,F_3)\) is a special non-extendable \(3\)-sequence and \(S_k \in |F_i + F_j - F_k|\), then, after possibly permuting the indices, one of the following holds:
\begin{enumerate}
    \item \((S_1,S_2,S_3)\) is of type~\((E_8,A_1,A_1)\), and \(X\) is of type \BarthPeters.
    \item \((S_1,S_2,S_3)\) is of type~\((E_8,A_1,A_1)\), and \(X\) is of type~\(\tilde{E}_7^{(2)}\).
    \item \((S_1,S_2,S_3)\) is of type~\((E_7,D_8,A_1)\), and \(X\) is of type~\(\tilde{A}_7\).
\end{enumerate}
In particular, \(X\) admits a special non-extendable \(3\)-sequence if and only if \(X\) is of type~\(\tilde{A}_7\) or~\(\tilde{E}_7^{(2)}\).
\end{theorem}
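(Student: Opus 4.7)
The strategy is to examine each row of Table~\ref{tab:S1S2S3} and determine which triangle graphs can correspond to non-extendable $3$-sequences. The key numerical constraint is the following: if a half-fiber $F_4$ extends $(F_1,F_2,F_3)$ to a $4$-sequence, then for every permutation $(i,j,k)$ of $(1,2,3)$ one has
\[
    F_4.S_k \;=\; F_4.(F_i+F_j-F_k) \;=\; 1.
\]
Since $F_4$ is nef and $S_k=\sum_{l}a_{k,l}R_{k,l}$ is, by \autoref{prop: propertiesofcharacteristiccycle}, the fundamental cycle of its Dynkin support, the equation $F_4.S_k=1$ forces exactly one summand $a_{k,l_0}R_{k,l_0}$ with $a_{k,l_0}=1$ to satisfy $F_4.R_{k,l_0}=1$, all other components having $F_4.R_{k,l}=0$. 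We call a component with coefficient $1$ in the fundamental cycle a \emph{simple tip} of $S_k$. Inspecting the highest-root coefficients of the simply-laced Dynkin diagrams recalled before \autoref{prop: propertiesofcharacteristiccycle}, we see that $A_n$ has $n$ simple tips, $D_n$ has three, $E_6$ has two, $E_7$ has one, and $E_8$ has none.

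The immediate consequence is that any $3$-sequence whose triangle graph contains an $E_8$-summand is automatically non-extendable, which among the rows of Table~\ref{tab:S1S2S3} happens only for $(E_8,A_1,A_1)$. For every other row each $S_k$ admits at least one simple tip, and our plan is to produce an extending half-fiber $F_4$. The construction proceeds either by extending $(F_1,F_2,F_3)$ to a $c$-degenerate $10$-sequence via \autoref{thm: 10sequence} and traversing the resulting chain of $(-2)$-curves until a genuine half-fiber is reached, or by applying \autoref{rem: prop31inextraspecial} to a well-chosen $2$-sub-sequence to produce a new half-fiber intersecting each of $F_1,F_2,F_3$ exactly once. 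The only row outside $(E_8,A_1,A_1)$ where every such construction fails is $(E_7,D_8,A_1)$: there the unique simple tip of $S_1$ is constrained, together with the conditions from \autoref{prop: G2exists} and \autoref{lem: combinatorial.0}, to coincide with a component of $G_1$ or $G_2$ of multiplicity $\geq 2$, which rules out every candidate for $F_4$.

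For the two surviving types we complete the proof by recovering the full dual graph of $(-2)$-curves on $X$. In the $(E_7,D_8,A_1)$ case, the triangle graph already displays $10$ curves arranged in the $\tilde{A}_7$-configuration with two extra nodes drawn in \autoref{example:A7}, so $X$ is of type $\tilde{A}_7$. In the $(E_8,A_1,A_1)$ case, the triangle graph consists of $10$ curves whose combination with the simple fibers $G_i$ and the structural conditions of \autoref{prop: G2exists} and \autoref{lem: combinatorial.0} forces at least one further $(-2)$-curve on $X$. A case analysis based on the position of this additional curve yields exactly two possibilities, matching the dual graphs of \autoref{example:BP} and \autoref{example:E7(2)}, and hence the types~\BarthPeters\ and $\tilde{E}_7^{(2)}$ of the theorem.

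The main obstacle is the case-by-case verification in the second paragraph: each of the roughly twenty rows of Table~\ref{tab:S1S2S3} outside the three exceptional types demands an explicit construction of $F_4$, with further sub-cases according to the positions at which the simple tips of each $S_k$ meet the fibers $G_l$ and to the different ways in which cycles $S_k$ of type $A_n$ can sit inside the $G_l$. Although each individual sub-case is combinatorially routine, the sheer length of the bookkeeping, and the careful selection of the correct simple tip on which to base the extension in the families $(D_m,D_n,A_l)$ and $(A_m,A_n,A_l)$, constitute the most delicate aspect of the argument.
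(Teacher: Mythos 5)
Your opening observation is sound: since each \(S_k\) is the fundamental cycle of its support, an extending half-fiber \(F_4\) must meet a coefficient-one component of every \(S_k\), and \(E_8\) has none, so any triangle graph containing an \(E_8\)-summand is non-extendable; this is exactly how the paper proves non-extendability in \autoref{example:BP}. But the main body of your argument has a genuine gap. The theorem asserts that a \emph{non-extendable} special \(3\)-sequence has one of three triangle graphs, so you must rule out every other row of \autoref{tab:S1S2S3} --- including the infinite families and all the rows with \(|\Gamma|<10\) such as \((A_1,A_1,A_1)\) or \((D_4,A_1,A_1)\). For those small graphs there is no hope of exhibiting \(F_4\) inside \(\Gamma\), and neither of your two fallback constructions is shown to work: extending to a degenerate \(10\)-sequence and ``traversing until a half-fiber is reached'' is circular, because a non-extendable \(3\)-sequence extends only to a \(3\)-degenerate \(10\)-sequence, which contains no fourth half-fiber; and \autoref{rem: prop31inextraspecial} applied to a \(2\)-subsequence produces a half-fiber \(F\) with \(F.F_i=F.F_j=1\) but gives no control whatsoever on \(F.F_k\) for the third member. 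You acknowledge that the case-by-case verification is ``the main obstacle'' but do not carry it out, and as framed it cannot be carried out for the small rows.

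The paper avoids this entirely with \autoref{cor: M}: for a non-extendable special \(3\)-sequence, the chains \(R_{i,j}\) of any extending \(3\)-degenerate \(10\)-sequence are forced to lie inside the \(S_i\), so the components of the triangle graph span a sublattice of \(\Num(X)\) of rank \(10\) and discriminant \(1\), \(4\) or \(16\). This single lattice-theoretic constraint eliminates all rows with \(|\Gamma|<10\) at once and seven more by a discriminant computation, leaving only eight cases, each of which has a \(\Gamma\) rich enough to contain an explicit extended Dynkin diagram giving the extending half-fiber (e.g.\ a half-fiber of type \(\I_2^*\) for \((E_6,A_7,A_7)\)). Without an analogue of this reduction your proof cannot be completed along the lines you propose. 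Your treatment of the surviving cases \((E_8,A_1,A_1)\) and \((E_7,D_8,A_1)\) --- identifying the extra \((-2)\)-curve and recovering the types \BarthPeters, \(\tilde{E}_7^{(2)}\) and \(\tilde{A}_7\) --- is in the right spirit but also only sketched.
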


Recall that any non-extendable \(3\)-sequence \((F_1,F_2,F_3)\) is contained by \autoref{thm: 10sequence} in a \(3\)-degenerate \(10\)-sequence of the following form: 
\begin{equation} \label{eq:10-sequence}
\big(F_1, \ldots,F_1+\sum_{j=1}^{r_1} R_{1,j}, F_2, \ldots, F_2+ \sum_{j = 1}^{r_2} R_{2,j}, F_3, \ldots, F_3 + \sum_{j = 1}^{r_3} R_{3,j}\big).    
\end{equation}

\begin{proposition} \label{cor: M}
If \((F_1,F_2,F_3)\) is a special non-extendable \(3\)-sequence, then the components of the triangle graph of \((F_1,F_2,F_3)\) span a lattice of rank~\(10\) and discriminant \(1,4\) or~\(16\).
\end{proposition}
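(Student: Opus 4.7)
The plan is to sandwich $L := \langle \Gamma \rangle_\mathbb{Z}$ between the lattice $M$ spanned by a 10-sequence extension of $(F_1,F_2,F_3)$ and the overlattice $L^+ := L + \mathbb{Z}F_1 + \mathbb{Z}F_2 + \mathbb{Z}F_3$ obtained by adjoining the three half-fibers. By \autoref{prop: propertiesofcharacteristiccycle}(3), the relations $G_i = S_j + S_k \sim 2F_i$ yield $2F_i \in L$ for every $i$, and summing gives $F_1+F_2+F_3 = S_1+S_2+S_3 \in L$. Hence $L^+/L$ is $2$-elementary, and the relation $\overline{F_1}+\overline{F_2}+\overline{F_3}=0$ in $L^+/L$ forces $|L^+/L|$ to divide $4$. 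By \autoref{thm: 10sequence}, the non-extendable 3-sequence extends to a 3-degenerate 10-sequence \eqref{eq:10-sequence}, and by \cite[§1]{Cossec:Picard_group} the lattice $M$ spanned by its ten classes has rank $10$ and $[\Num(X):M]=3$.

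The heart of the argument is to show that every $(-2)$-curve $R_{i,j}$ from the extension lies in $L$, i.e., is a component of $\Gamma$. For $j \geq 2$, the axioms of a $c$-degenerate sequence give $R_{i,j}.F_k = 0$ for all $k$, so $R_{i,j}$ is a fiber component of each $|2F_k|$; \autoref{prop: F1.F2.no.common.components}(1) together with the specialness hypothesis then forces $R_{i,j}$ to be a component of one of the simple fibers $G_k = S_i + S_j$, and hence of $\Gamma$. For the special bisections $R_{i,1}$, the conditions $R_{i,1}.F_i = 1$ and $R_{i,1}.F_k = 0$ for $k \neq i$ combined with the explicit fundamental-cycle description of $S_k$ from \autoref{prop: propertiesofcharacteristiccycle}(4) identify $R_{i,1}$ with a simple component of $G_k$ for a suitable $k \neq i$. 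Together with $2F_i \in L$, this gives $\mathrm{rk}(L) = 10$ and $M \subseteq L^+$.

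With $M \subseteq L^+$, the index $[\Num(X):L^+]$ divides $[\Num(X):M] = 3$. Ruling out $L^+ = M$ (which would yield $\mathrm{disc}(L) \in \{9, 36, 144, 576\}$, incompatible with the claim) reduces by \autoref{cor: divisibility} to exhibiting a component of $\Gamma$ whose intersection with the sum of the 10-sequence classes is divisible by $3$ but not by $9$. Such a component exists because $|\Gamma| \geq \mathrm{rk}(L) = 10$ strictly exceeds the seven $R_{i,j}$, and the additional components, required to realize the fiber relations $G_i = S_j + S_k$, produce the required non-divisibility. Consequently $L^+ = \Num(X)$, so $[\Num(X):L] = |L^+/L|$ divides $4$, and unimodularity of $\Num(X) \cong E_{10}$ yields $\mathrm{disc}(L) = [\Num(X):L]^2 \in \{1, 4, 16\}$. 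The principal difficulty is the placement step for the special bisections $R_{i,1}$: combining specialness and non-extendability with a case-by-case analysis of the Dynkin shapes of $S_k$ in \autoref{tab:S1S2S3} to rule out alternative placements as components of half-fibers or of reducible fibers of $|2F_k|$ different from $G_k$.
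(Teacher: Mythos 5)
Your overall architecture is the same as the paper's: show that the components of \(\Gamma\) together with \(F_1,F_2,F_3\) generate all of \(\Num(X)\) by comparing with the index-\(3\) sublattice \(M\) spanned by a \(3\)-degenerate \(10\)-sequence, and then use \(2F_i\in L\) and \(F_1+F_2+F_3\equiv S_1+S_2+S_3\in L\) to bound \([\Num(X):L]\) by \(4\). That framing, and the final discriminant computation, are correct. However, the two steps that carry all the content are not actually proved. First, your argument that the \(R_{i,j}\) lie in \(\Gamma\) does not go through as stated: for \(j\ge 2\), orthogonality to all three \(F_k\) only places \(R_{i,j}\) in \emph{some} fiber of each \(|2F_k|\), not in the particular fibers \(G_k=S_i+S_j\), and neither \autoref{lem: F1.F2.no.common.components} nor specialness bridges that gap; for the bisections \(R_{i,1}\) you explicitly defer to an unexecuted case analysis over \autoref{tab:S1S2S3} and call it ``the principal difficulty.'' The paper disposes of this in two lines with no case analysis: \(R_{i,1}.S_i=R_{i,1}.(F_j+F_k-F_i)=-1\), and a \((-2)\)-curve meeting an effective divisor negatively is one of its components, so \(R_{i,1}\) is a component of \(S_i\); inductively \(R_{i,n+1}.(S_i-\sum_{j\le n}R_{i,j})=-1\) shows that \emph{every} \(R_{i,j}\) is a component of \(S_i\). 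You should look for this computation rather than a shape-by-shape analysis.

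Second, your step ruling out \(L^+=M\) is an assertion, not an argument. The criterion via \autoref{cor: divisibility} is the right one (a component \(R\) of \(\Gamma\) with \(9\nmid R.\sum f_i\) would do), but ``the additional components produce the required non-divisibility'' gives no reason why such a component exists; the intersection numbers \(R.\sum f_i\) depend on the configuration, and nothing you have said prevents them all from being divisible by \(9\). The paper's route is genuinely geometric and uses non-extendability in an essential way: take the isotropic class \(e_{9,10}\) of \autoref{lem:Cossec's.lemma} (known not to lie in \(M\)), realize it by Riemann--Roch as an effective divisor \(D\) with prescribed intersections, decompose \(D\sim D'+C\) by \autoref{lem: reducibility}, observe that \(D'\) is a half-fiber meeting each \(F_i\) at most once and hence, by non-extendability, \(D'\equiv F_i\) for some \(i\), and finally show that \(C\) is supported on the fiber containing \(S_3\), so \(D\in L+\langle F_1,F_2,F_3\rangle\setminus M\). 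Without this (or some substitute that actually uses non-extendability), your proof is incomplete; note that the statement is false for extendable special \(3\)-sequences, so any correct argument must invoke that hypothesis somewhere, and yours only does so inside the two unproved steps.
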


\begin{proof}
Let \(\Lambda\) be the lattice spanned by the components of the triangle graph, which we recall is the dual graph of components of the divisor \(S_1 + S_2 + S_3\).
By \autoref{thm: 10sequence}, there exists a \(3\)-degenerate \(10\)-sequence as in \eqref{eq:10-sequence}.
The lattice \(L\) spanned by the divisors in the \(10\)-sequence has index \(3\) in \(\Num(X)\).

We first show that the \(R_{i,j}\) and \(2F_i\) are contained in \(\Lambda\). We argue by induction on \(j\). Since \(S_i\) is effective and \(R_{i,1}.S_i = -1\), the curve~\(R_{i,1}\) is a component of~\(S_i\). 
Assume that \(R_{i,j}\) is contained in \(S_i\) for \(1 \leq j \leq n\). Then, \((S_i - \sum_{j=1}^n R_{i,j})\) is effective and \(R_{i,n+1}.(S_i - \sum_{j=1}^n R_{i,j}) = -1\). Hence, also \(R_{i,n+1}\) is contained in~\(S_i\).
Since \(2F_i \equiv G_i = S_j + S_k\), this shows that all the~\(R_{i,j}\) and~\(2F_i\) are in~\(\Lambda\), as claimed.

Now, we have \(F_1 + F_2 + F_3 \equiv S_1 + S_2 + S_3 \in \Lambda\), so \(\Lambda \cap L \subseteq L\) has index \(1\), \(2\) or~\(4\). Thus, it suffices to show that \(\Lambda[F_1,F_2,F_3]\) contains an element of~\(\Num(X) \setminus L\).

Without loss of generality, we may assume \(r_3 \geq 3\). 
Consider the vector \(e_{9,10} \in \Num(X)\) given by \autoref{lem:Cossec's.lemma}. Note that \(e_{9,10} \notin L\) because of \autoref{cor: divisibility}. 
By Riemann--Roch, there exists an effective divisor \(D \in \Pic(X)\), whose numerical class is \(e_{9,10}\), such that \(D.F_i = 1\) for \(i \in \{1,2,3\}\), and \(D.R_{i,j} = 0\) for all \((i,j)\) except for \(D.R_{3,r_3-1} = 1\).
By \autoref{lem: reducibility}, there exist \(D'\) and \(C\) with \(D \sim D' + C\), such that \(D'\) is nef, \(D^2 = D'^2 = 0\), and \(C\) is a linear combination of \((-2)\)-curves with non-negative coefficients.
Since \(D'.F_i \leq 1\) for all \(i\) and \((F_1,F_2,F_3)\) is non-extendable, \(D' \equiv F_i\) for some \(i \in \{1,2,3\}\). In particular, \(D.D' = 1\) and, thus, \(C^2 = -2\).

Assume \(D' \equiv F_i\) for some \(i \in \{1,2,3\}\). 
Then, \(C.F_j = 0 \) for \(j \neq i\), so \(C\) is a connected configuration of~\((-2)\)-curves contained in a fiber \(G_j\) of~\(|2F_j|\). 
Choose \(j \neq i,3\). 
We also have \(C.R_{3,r_3-1} = 1\), so \(G_j\) is the fiber containing~\(R_{3,r_3-1}\). In the second paragraph of the proof, we saw that \(R_{3,r_3-1}\) is contained in~\(S_3\), so \(G_j\) is the fiber containing~\(S_3\). By \autoref{prop: propertiesofcharacteristiccycle}, all components of \(G_j\) are in \(\Lambda\), so \(D = D' + C \in \Lambda[F_1,F_2,F_3]\), as desired.
\end{proof}

We can now prove the theorem.
 
\begin{proof}[Proof of \autoref{thm:non-extendable.special}]
Any Enriques surface of type~{\(\tilde{A}_7\)} or~\(\tilde{E}_7^{(2)}\) admits a special non-extendable \(3\)-sequence (see \autoref{example:A7} and \autoref{example:E7(2)}), so it suffices to prove the first statement.

By \autoref{cor: M}, the sublattice of \(\Num(X)\) spanned by the components of \(S_1,S_2,S_3\) has rank~\(10\) and discriminant \(d \in \{1,4,16\}\). In particular, the triangle graph~\(\Gamma\) of \((F_1,F_2,F_3)\) has at least \(10\) vertices. By inspection of \autoref{tab:S1S2S3} on page~\pageref{tab:S1S2S3}, we obtain the following \(15\) possible types of \((S_1,S_2,S_3)\) such that \(|\Gamma|\ge 10\):
\begin{align*}
& (E_8,A_1,A_1),(E_7,D_8,A_1),(E_6,A_7,A_7),(D_6,D_6,D_6),(D_6,D_6,D_4), \\
& \text{the first type of } (D_4,D_4,D_4), \\
& (D_m,D_n,A_3) \text{ with } m+n = 9,(D_m,D_n,A_1) \text{ with } m+n = 10, \\
& (D_7,A_3,A_1),(D_8,A_1,A_1),(A_m,A_m,A_m) \text{ with } 6 \leq m \leq 7, \\
& 2 \text{ types of } (A_m,A_m,A_n) \text{ with } 8 \leq m+n \leq 9, \\
& 2 \text{ types of } (A_m,A_n,A_l) \text{ with } 10 \leq m+n+l \leq 11.
\end{align*}

For seven of them, namely for \((D_4,D_4,D_4)\), \((D_m,D_n,A_3)\), \((D_m,D_n,A_1)\), \((D_m,A_3,A_1)\), \((D_m,A_1,A_1)\), the second case of type \((A_m,A_m,A_n)\) and the second case of type \((A_m,A_n,A_l)\), we have \(d > 16\), which is absurd.
We then analyze the remaining eight dual graphs. 

In case \((E_8,A_1,A_1)\), consider the genus one fibration \(|2F|\) with a fiber with two components that occurs in the graph. There is a fiber \(G \in |2F|\) of type~\(\III^*\), such that a simple component of \(G\) does not belong to \(\Gamma\). 
If \(G\) is not a simple fiber, then we obtain the graph of type~\(\tilde E_7^{(2)}\). If \(G\) is a simple fiber, we obtain the graph of type~{\BarthPeters}, and in particular \(X\) is of type~\(\tilde A_7\).

In case \((E_7,D_8,A_1)\), \(\Gamma\) is the dual graph of type~\(\tilde A_7\), so \(X\) is of type~\(\tilde A_7\). 

In case \((E_6,A_7,A_7)\), any half-fiber of type~\(\I_2^*\) in \(\Gamma\) extends the sequence. 

In case \((D_6,D_6,D_6)\), the half-fiber of type~\(\IV^*\) in \(\Gamma\) extends the sequence. 

In case \((D_6,D_6,D_4)\), the half-fiber of type~\(\I_8\) in \(\Gamma\) extends the sequence. 

In case \((A_m,A_m,A_m)\), \(6 \leq m \leq 7\), any half-fiber of type~\(\I_0^*\) in \(\Gamma\) extends the sequence. 

In the first case of type \((A_m,A_m,A_n)\), \(8 \leq m+n \leq 9\), any half-fiber of type~\(\I_4\) in \(\Gamma\) extends the sequence as soon as \(n \geq 2\). If \(n=1\), then \(m=7\) (since \(X\) cannot contain a fiber of type \(\I_5^*\) by \autoref{lem: combinatorial.0}), and it holds \(d = 64 > 16\).

In the first case of type \((A_m,A_n,A_l)\), \(10 \leq m+n+l \leq 11\), assume up to symmetry that \(m\geq n \geq l\). If \(l\geq 2\), then any half-fiber of type~\(\I_3\) in \(\Gamma\) extends the sequence. If \(l=1\), then \(m+n=9\) (since \(X\) cannot contain a fiber of type \(\I_{10}\)), and a computation shows that \(d = 144 > 16\).
\end{proof}

\subsection{Non-special non-extendable \texorpdfstring{\(3\)}{3}-sequences} \label{sec: non-special}
In \autoref{example:2D4}, we provided examples of non-special non-extendable \(3\)-sequences, all of which occurred on Enriques surfaces of type~\(2\tilde{D}_4\). The aim of this section is to prove that these are all the examples there are:

\begin{theorem} \label{thm:non-extendable.non-special}
If \((F_1,F_2,F_3)\) is a non-special non-extendable \(3\)-sequence, then \(X\) is of type~\(2\tilde{D}_4\) and, after possibly permuting the indices, both \(|2F_1|\) and \(|2F_2|\) admit a simple fiber of type~\(\I_4^*\), and \(|2F_3|\) admits a half-fiber of type~\(\I_4^*\). 
In particular, \(X\) admits a non-special non-extendable \(3\)-sequence if and only if \(X\) is of type~\(2\tilde{D}_4\).
\end{theorem}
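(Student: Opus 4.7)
The plan is to start with the 3-degenerate 10-sequence extending \((F_1,F_2,F_3)\) guaranteed by \autoref{thm: 10sequence}, giving chains \(R_{i,1},\ldots,R_{i,r_i}\) of \((-2)\)-curves with \(r_1+r_2+r_3 = 7\). First, I would exploit the defining conditions \(F_k.R_{i,j} = 0\) for \((k,j) \neq (i,1)\) to observe that every \(R_{i,j}\) with \(j \geq 2\) is orthogonal to all three half-fibers \(F_k\), hence lies in a fiber of each \(|2F_k|\). By \autoref{lem: F1.F2.no.common.components}(1), such a curve sits in a \emph{simple} fiber of all three fibrations. The non-speciality assumption then prevents the shared fiber components from assembling into any effective divisor representing \(F_i+F_j-F_k\) (numerically, up to \(K_X\)), which severely restricts how the \(R_{i,j}\) can be distributed among the three fibrations.

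Next, I would combine this combinatorial picture with the bounds of \autoref{lem: combinatorial.0}, namely that the sum of the root lattices attached to the reducible fibers of any single \(|2F_i|\) embeds into \(E_8\). Together with \autoref{prop: G2exists}(3), which says that every component of \(F_i\) except one simple one lies in a simple fiber of \(|2F_j|\) for each \(j \neq i\), this forces each \(|2F_i|\) to have a distinguished reducible fiber \(G_i\) absorbing the chain and most of the components of the other \(F_k\). A case analysis on the partition \((r_1,r_2,r_3)\) of \(7\), on which of the \(G_i\) are simple versus half-fibers, and on the Kodaira types of the \(G_i\), would then be carried out. The desired target is the configuration in which, after permutation, both \(G_1\) and \(G_2\) are simple fibers of type \(\I_4^*\) and \(G_3\) is a half-fiber of the same type.

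To eliminate every non-matching configuration, I would reuse the lattice-theoretic technique of \autoref{cor: M}: the components of the fibers \(G_i\), together with the classes \(F_i\) and the chains \(R_{i,j}\), span a sublattice of \(\Num(X)\) whose index can be controlled by the classes \(e_{j,k}\) of \autoref{lem:Cossec's.lemma}. Applying Riemann--Roch to these classes together with \autoref{lem: reducibility} either produces a fourth half-fiber contradicting non-extendability, or it forces the rank of the configuration to saturate at \(10\); in the compatible case this yields exactly \(11\) \((-2)\)-curves assembling into the dual graph of type~\(2\tilde{D}_4\). The main obstacle, as in the special case treated in \autoref{sec: special}, is the sheer volume of a priori configurations; however, non-speciality eliminates the combinatorially richest possibilities (those producing the triangle graphs of \autoref{tab:S1S2S3}), so the remaining casework should reduce to a handful of scenarios that can be ruled out directly from the fiber constraints and the embedding into \(E_8\). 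The most delicate subcase I anticipate is pinning down which of the three fibrations admits the \emph{half}-fiber of type~\(\I_4^*\) rather than a simple one, corresponding to the asymmetry recorded in the statement of the theorem.
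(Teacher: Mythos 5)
Your opening step contains a genuine error that derails the rest of the plan. You claim that each \(R_{i,j}\) with \(j\ge 2\), being orthogonal to \(F_1,F_2,F_3\), must lie in a \emph{simple} fiber of all three fibrations ``by \autoref{lem: F1.F2.no.common.components}(1)''. That lemma only says that half-fibers of two distinct fibrations in the sequence cannot share a component; it does not forbid such a curve from lying in a half-fiber of \emph{one} of the three fibrations, and indeed in the configuration the theorem asserts (type~\(2\tilde{D}_4\)) the chains \(R_{1,j}\) and \(R_{2,j}\) all lie in the half-fiber \(F_3\) of type~\(\I_4^*\). So your claim is false precisely in the target case. The paper's proof rests on the opposite statement: \autoref{lem: criterion} shows, via a cohomological argument with \(|F_1+F_2-R|\) in which non-speciality and non-extendability both enter essentially, that every \((-2)\)-curve \(R\) with \((F_1+F_2+F_3).R=1\) --- in particular each \(R_{i,1}\), and hence each chain --- lies in a \emph{half}-fiber of one of the \(|2F_i|\). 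This is the structural input that makes the half-fiber of type~\(\I_4^*\) appear; without it your case analysis cannot start.

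Beyond that, two further ingredients of the paper's argument are missing from your outline. First, one needs \autoref{lem: Giacomo's.lemma} (in a \(2\)-degenerate \(4\)-sequence, \(R_1\) and \(R_2\) cannot simultaneously sit in half-fibers of the opposite fibrations) to rearrange the \(10\)-sequence so that \(r_3=0\) and \emph{all} chains sit inside the single half-fiber \(F_3\); this is where the asymmetry of the statement comes from, and in the paper it is obtained by a delicate analysis of the decompositions \(D\sim D'+C\) and \(E\sim E'+C'\) with \(C+C'=F_3\), not by a discriminant count as in \autoref{cor: M}. Second, pinning down the type of \(F_3\) (excluding \(\I_8\), \(\I_9\), \(\III^*\), \(\II^*\) in favor of \(\I_4^*\)) requires an effectivity argument with the divisor \(Z=C-R_{1,2}-2R_{1,3}-\cdots-R_{1,r_1}\) together with \autoref{cor: divisibility}; the lattice-rank saturation you propose is not the mechanism used, and it is not clear it would suffice. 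The overall skeleton (start from the \(10\)-sequence, use the classes \(e_{j,k}\), Riemann--Roch and \autoref{lem: reducibility}) is right, but as written the proposal would not produce a proof.
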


As in the special case, any non-extendable \(3\)-sequence \((F_1,F_2,F_3)\) is contained in a \(3\)-degenerate \(10\)-sequence as in \eqref{eq:10-sequence}.

\begin{lemma} \label{lem: criterion} \label{lem: specialtycriterion}
Let \((F_1,F_2,F_3)\) be a non-special non-extendable \(3\)-sequence. If \(R\) is a \((-2)\)-curve such that \((F_1+F_2+F_3).R =1\), then \(R\) lies in a half-fiber of one of the \(|2F_i|\). In particular, all the components \(R_{i,j}\) in \eqref{eq:10-sequence} lie in half-fibers of \(|2F_1|\), \(|2F_2|\) or \(|2F_3|\).
\end{lemma}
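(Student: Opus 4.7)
The plan is to analyze the intersection pattern of $R$ with the $F_i$, and then invoke \autoref{rem: prop31inextraspecial} to reduce the contradiction to a numerical computation of $F_3.F_3'$.

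First, since each $F_i$ is nef, we have $R.F_i\geq 0$, so the hypothesis $(F_1+F_2+F_3).R=1$ forces exactly one intersection number to be~$1$. After reindexing, assume $R.F_3=1$ and $R.F_1=R.F_2=0$; then $R$ is a component of some fiber of $|2F_1|$ and of $|2F_2|$. Suppose toward contradiction that neither of those fibers is a half-fiber. Then $R$ is a component shared by simple fibers $G_1\in|2F_1|$ and $G_2\in|2F_2|$, so \autoref{rem: prop31inextraspecial} allows us to extend $(F_1,F_2)$ to a special $3$-sequence $(F_1,F_2,F_3')$ whose associated effective divisor $S_3'\in|F_1+F_2-F_3'|$ contains $R$ in its support (the shared component is, by the construction of $F_3'$, a component of $S_3'$).

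The key step is the computation
\[
    F_3.F_3' \;=\; F_3.(F_1+F_2) - F_3.S_3' \;=\; 2 - F_3.S_3'.
\]
Since $R$ occurs with positive multiplicity in $S_3'$ and $R.F_3=1$, the bound $F_3.S_3'\geq 1$ follows from the nefness of $F_3$, so $F_3.F_3'\in\{0,1\}$. Each case yields a contradiction. If $F_3.F_3'=1$, then $F_3\not\equiv F_3'$ (since otherwise $F_3.F_3'=F_3^2=0$), and $(F_1,F_2,F_3,F_3')$ is a $4$-sequence, contradicting non-extendability. If $F_3.F_3'=0$, then $F_3$ and $F_3'$ are primitive nef isotropic classes in the hyperbolic lattice $\Num(X)\cong E_{10}$ with $F_3.F_3'=0$; the Hodge index theorem forces $F_3\equiv F_3'$, so the speciality of $(F_1,F_2,F_3')$ transfers numerically to $(F_1,F_2,F_3)$, contradicting the non-speciality hypothesis.

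For the final assertion about the $R_{i,j}$, the main statement covers the case $j=1$, since $R_{i,1}.(F_1+F_2+F_3)=R_{i,1}.F_i=1$ places $R_{i,1}$ in a half-fiber $H$ of some $|2F_k|$ with $k\neq i$. For $j\geq 2$, one inducts along the chain: assuming $R_{i,j-1}$ is a component of $H$, we have $R_{i,j}.H=R_{i,j}.F_k=0$ while $R_{i,j}.R_{i,j-1}=1$, so $R_{i,j}$ must itself be a component of $H$ (otherwise the contribution of the chain neighbor $R_{i,j-1}\subseteq H$ alone would make $R_{i,j}.H\geq 1$).

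The main obstacle is the precise control of $S_3'$: guaranteeing that the given $R$ actually appears in its support is what makes the whole argument run, and it depends on the specific construction of \cite[Proposition~3.1]{extra-special}, in which the shared component of $G_1$ and $G_2$ seeds the fundamental cycle that yields $F_3'$.
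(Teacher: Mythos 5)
Your argument reaches the right conclusion but by a genuinely different route from the paper's, and it has one soft spot. The paper proves the statement directly: after reducing to \(R.F_1=R.F_2=0\), \(R.F_3=1\), it uses the exact sequence \(0\to\mathcal{O}_X(F_1+F_2-R)\to\mathcal{O}_X(F_1+F_2)\to\mathcal{O}_R(F_1+F_2)\to 0\) and \(h^0(\mathcal{O}_X(F_1+F_2))=2\) to produce \(D\in|F_1+F_2-R|\), decomposes \(D\sim D'+\sum a_iR_i\) with \(D'\) a half-fiber via \autoref{lem: reducibility}, and forces \(D'\equiv F_i\) by non-extendability; the case \(D'\equiv F_3\) contradicts non-specialty, while \(D'\equiv F_1\) (say) gives \(F_2=R+\sum a_iR_i\), exhibiting \(R\) inside the half-fiber \(F_2\). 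You instead invoke \autoref{rem: prop31inextraspecial} to manufacture a special \(3\)-sequence \((F_1,F_2,F_3')\) and then play \(F_3'\) against \(F_3\) numerically. Granting that \(R\in\mathrm{Supp}(S_3')\), your dichotomy \(F_3.F_3'\in\{0,1\}\) and the two contradictions (a \(4\)-sequence extending \((F_1,F_2,F_3)\), or \(F_3\equiv F_3'\) making \((F_1,F_2,F_3)\) special by \autoref{def: special.seq}) are both correct, and your induction for the chain components \(R_{i,j}\) agrees in substance with the paper's.

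The soft spot is the one you flag yourself: you need \(R\in\mathrm{Supp}(S_3')\) to conclude \(F_3.S_3'\geq 1\) (if \(F_3.S_3'=0\) then \(F_3.F_3'=2\) and no contradiction arises), but \autoref{rem: prop31inextraspecial} only records that \emph{some} configuration \(\sum a_iR_i\) satisfies \(F_3'+\sum a_iR_i\in|F_1+F_2|\); it does not state that the chosen common component \(R\) occurs in it, and nothing else in this paper pins down which fiber of \(|2F_1|\) carries \(S_3'\). The claim is true, but certifying it means re-running the construction of \cite[Proposition~3.1]{extra-special}, which is precisely the cohomological argument above: produce \(D\in|F_1+F_2-R|\), write \(F_1+F_2-F_3'\sim R+\sum a_iR_i\), and identify this with \(S_3'\) by uniqueness of the divisor in its linear system. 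Once that argument is on the table, the paper's direct case analysis on \(D'\) finishes immediately, so your detour through \(F_3.F_3'\) buys generality of packaging (it only uses the interface of \autoref{rem: prop31inextraspecial}) at the cost of an input slightly stronger than what that remark actually asserts.
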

\begin{proof}
Since the \(F_i\) are nef, we can assume \(F_1.R=F_2.R=0\) and \(F_3.R=1\) up to switching the indices. Consider the exact sequence
\[
    0 \to \mathcal{O}_X(F_1 + F_2 - R) \to \mathcal{O}_X(F_1 + F_2) \to \mathcal{O}_R(F_1 + F_2) \to 0.
\]
By \cite[Proposition~2.6.1]{CossecDolgachevLiedtke}, the linear system \(|F_1 + F_2|\) has no fixed components. Hence, by \cite[Corollary 2.6.5]{CossecDolgachevLiedtke}, we have \(h^0(X,\mathcal{O}_X(F_1 + F_2)) = 2\). On the other hand, \(\mathcal{O}_R(F_1 + F_2) = \mathcal{O}_R\), since \((F_1 + F_2).R = 0\). Thus, \(h^0(R,\mathcal{O}_R(F_1 + F_2)) = 1\) and the long exact sequence in cohomology shows that \(|F_1 + F_2 - R|\) is non-empty.

Let \(D \in |F_1 + F_2 - R|\). Then, \(D^2 = 0\) and \(D.F_1 = D.F_2 = D.F_3= 1\). By \autoref{lem: reducibility}, we can write \(D \sim D' + \sum_{i} a_iR_i\) with \(D' \neq 0\) nef, \(D'^2 = 0\), and the \(R_i\) \((-2)\)-curves. 
Since the \(F_i\) are nef, we have \(D'.F_i \le 1\) for \(i=1,2,3\), so \(D'\) is a half fiber \(F\) on~\(X\) by \cite[Corollary 2.2.9]{CossecDolgachevLiedtke}. 
As \((F_1,F_2,F_3)\) is non-extendable, \(D'\) must be a half-fiber of \(|2F_1|\), \(|2F_2|\) or \(|2F_3|\).

Assume first that \(F \equiv F_3\). 
Then, \(R + \sum a_iR_i \equiv F_1 + F_2 - F_3\), contradicting the assumption that \((F_1,F_2,F_3)\) is non-special.

Assume instead that \(F \equiv F_1\) or \(F \equiv F_2\). After a suitable permutation of these half-fibers, we may assume \(F = F_1\). Then, \(|D - F| = |F_2 - R| = |\sum_{i} a_iR_i|\), so \(F_2 = R + \sum_{i} a_i R_i\) and \(R\) belongs to the half-fiber \(F_2\).

For the second part of the statement, note that \(R_{1,j}\) is in the same fiber of \(|2F_2|\) and \(|2F_3|\) as~\(R_{1,1}\). By the first part of the statement, \(R = R_{1,1}\) is in a half-fiber of \(|2F_2|\) or \(|2F_3|\). Hence, \(R_{1,j}\) is in a half-fiber of \(|2F_2|\) or \(|2F_3|\). A similar argument applies to the other \(R_{i,j}\).
\end{proof}

\begin{lemma} \label{lem: Giacomo's.lemma}
Let \((F_1,F_1+R_1,F_2,F_2+R_2)\) be a \(2\)-degenerate \(4\)-sequence. If \(R_2\) is a component of a half-fiber of~\(|2F_1|\), then \(R_1\) is not a component of a half-fiber of~\(|2F_2|\).
\end{lemma}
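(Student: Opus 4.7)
The plan is to derive a contradiction from the assumption that $R_2$ lies in a half-fiber $F_1'$ of $|2F_1|$ \emph{and} $R_1$ lies in a half-fiber $F_2'$ of $|2F_2|$. First I would record the intersection data forced by the definition of a $2$-degenerate $4$-sequence:
\[
F_1.F_2 = R_1.F_1 = R_2.F_2 = 1, \qquad F_1.R_2 = F_2.R_1 = R_1.R_2 = 0,
\]
together with $R_i^2 = -2$. Since $F_i \equiv F_i'$ numerically, all these numbers remain unchanged if any $F_i$ is replaced by $F_i'$; in particular $R_1.F_1' = R_2.F_2' = 1$ and $F_1'.F_2 = F_2'.F_1 = F_1'.F_2' = 1$.

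The next step is to pin down $R_2$ as a distinguished multiplicity-one component of $F_1'$. Expanding $F_1' = \sum_k m_k D_k$ as a sum of distinct irreducible components with $m_k \geq 1$ (one of which is $R_2$), the identity $1 = F_1'.F_2 = \sum_k m_k D_k.F_2$, combined with the nefness of $F_2$ (so each $D_k.F_2 \geq 0$) and $R_2.F_2 = 1$, forces $R_2$ to appear with multiplicity exactly one in $F_1'$ and every other component $D_k$ to satisfy $D_k.F_2 = 0$, hence also $D_k.F_2' = 0$ by numerical equivalence. A symmetric argument gives $F_2' = R_1 + \sum_j b_j B_j$ with $b_j \geq 1$ and $B_j.F_1 = B_j.F_1' = 0$.

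Applying the same non-negativity trick once more, now to $R_1.F_1' = 1 = \sum_i a_i R_1.C_i$ where $F_1' = R_2 + \sum_i a_i C_i$, produces a \emph{unique} index $i_0$ with $a_{i_0} = 1$ and $R_1.C_{i_0} = 1$. (Here $R_1$ is distinct from all $C_i$, since $R_1.F_1' = 1 \neq 0$ means $R_1$ is not itself a component of $F_1'$.) By the previous paragraph, $C_{i_0}.F_2' = 0$. On the other hand, substituting the decomposition of $F_2'$ gives
\[
0 = C_{i_0}.F_2' = C_{i_0}.R_1 + \sum_j b_j C_{i_0}.B_j = 1 + \sum_j b_j C_{i_0}.B_j.
\]
By \autoref{prop: G2exists}(1), the half-fibers $F_1'$ and $F_2'$ share no components, so each $B_j$ is distinct from $C_{i_0}$, whence $C_{i_0}.B_j \geq 0$; since every $b_j \geq 1$, the right-hand side is at least $1$, yielding the desired contradiction.

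The only delicate point is the identification of the two distinguished components $R_2 \in F_1'$ (of multiplicity one) and $C_{i_0} \in F_1'$ (the unique component meeting $R_1$) from the intersection data. Once these are in hand, the contradiction reduces to a single intersection computation that exploits the disjointness of the supports of $F_1'$ and $F_2'$ guaranteed by \autoref{prop: G2exists}(1).
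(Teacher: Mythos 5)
Your proof is correct and follows essentially the same strategy as the paper's: after recording the intersection data of the \(2\)-degenerate \(4\)-sequence, you identify \(R_2\) (resp.\ \(R_1\)) as the unique, multiplicity-one component of the relevant half-fiber of \(|2F_1|\) (resp.\ \(|2F_2|\)) meeting the other fibration, and derive a contradiction from the fact that half-fibers of \(|2F_1|\) and \(|2F_2|\) share no components (\autoref{lem: F1.F2.no.common.components}). The only difference is the endgame: the paper concludes directly that \(R_1\) and \(R_2\) must meet, contradicting \(R_1.R_2=0\) (implicitly using that the components of \(F_2'\) other than \(R_1\) lie in a fiber of \(|2F_1|\) disjoint from \(F_1'\)), whereas you reach the contradiction through the auxiliary component \(C_{i_0}\) of \(F_1'\) adjacent to \(R_1\), which keeps the argument purely numerical.
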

\begin{proof}
Suppose without loss of generality that \(R_2\) is a component of~\(F_1\) and that \(R_1\) is a component of~\(F_2\). Then, both half-fibers \(F_1\) and \(F_2\) are reducible. 
Recall that \(F_1\) and \(F_2\) do not share components by \autoref{lem: F1.F2.no.common.components}.
The half-fiber \(F_1\) has exactly one component which is a special bisection of~\(|2F_2|\), and that component must be \(R_2\), since \(R_2.F_2 = 1\). Analogously, \(R_1\) is the unique component of~\(F_2\) which is a special bisection of~\(|2F_1|\). This implies that \(R_1\) and \(R_2\) must meet, which contradicts the fact that \(R_1.R_2 = 0\).
\end{proof}

We can now prove the theorem.

\begin{proof}[Proof of \autoref{thm:non-extendable.non-special}]
Any Enriques surface of type~\(2\tilde{D}_4\) admits a non-special non-extendable \(3\)-sequence (see \autoref{example:2D4}), so it suffices to prove the first statement.

Fix a non-special non-extendable \(3\)-sequence \((F_1, F_2, F_3)\). By \autoref{thm: 10sequence}, there exists a \(3\)-degenerate \(10\)-sequence as in \eqref{eq:10-sequence}. Without loss of generality, we may assume \(r_1 \geq 3\). By \autoref{lem: specialtycriterion}, we may assume that \(R_{1,1}\) is contained in \(F_3\).

Consider the vector \(e_{2,3} \in \Num(X)\) given in \autoref{lem:Cossec's.lemma}. By Riemann--Roch, there exists an effective divisor \(D \in \Pic(X)\), whose numerical class is \(e_{2,3}\), such that \(D.F_i=1\), \(D.R_{1,1} = 1\), \(D.R_{1,3} = -1\) and \(D.R_{i,j}=0\) for all other \(i,j\). 
By \autoref{lem: reducibility}, there exist \(D'\) and \(C\) with \(D \sim D' + C\), such that \(D'\) is nef, \(D^2 = D'^2 = 0\), and \(C\) is a linear combination of \((-2)\)-curves with non-negative coefficients.
Since \(D'.F_i \leq 1\) for all \(i\) and \((F_1,F_2,F_3)\) is non-extendable, \(D' \equiv F_i\) for some \(i \in \{1,2,3\}\). 

Let \(E \coloneqq F_1 + F_2 + F_3 - D\). 
Then, \(E^2=0\) and \(E.F_i = 1\). 
Again by \autoref{lem: reducibility}, we can write \(E \sim E' + C'\) with \(E' \equiv F_j\) for some \(j \in \{1,2,3\}\). 
Notice that \(i \neq j\), since otherwise \(F_1+F_2+F_3-2F_i\) would be numerically equivalent to the effective divisor~\(C+C'\), contradicting the fact that the \(3\)-sequence is non-special. 
Therefore, \(C+C'\) is numerically equivalent to one of the \(F_i\), so it coincides with one of the half-fibers of \(|2F_1|\), \(|2F_2|\) or \(|2F_3|\). Since \(C.R_{1,3}=-1\), \(C\) contains \(R_{1,3}\), so \(C+C' = F_3\). In particular, either \(D'\equiv F_1\) and \(E'\equiv F_2\), or vice versa.

\begin{claim} \label{claim: tentative1}
There exists a \(3\)-degenerate \(10\)-sequence extending \((F_1,F_2,F_3)\) as in \eqref{eq:10-sequence}, such that, after possibly permuting the half-fibers, \(r_3 = 0\) and all \(R_{i,j}\) are contained in \(F_3\).
\end{claim}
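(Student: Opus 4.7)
The plan is to first place all $R_{1,j}$ inside the half-fiber $F_3$, then to rule out $r_3 \geq 1$, and finally to place the remaining $R_{2,j}$ inside $F_3$ as well.

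For the first step, I already know $R_{1,1} \subset F_3$ by the reduction just made and $R_{1,3} \subset F_3$ from the decomposition $F_3 = C + C'$ together with $C \cdot R_{1,3} = -1$. For the intermediate $R_{1,2}$, the equality $R_{1,2} \cdot F_3 = 0$ forces $R_{1,2}$ into some fiber of $|2F_3|$, and since $R_{1,2} \cdot R_{1,1} = 1$ with $R_{1,1} \subset F_3$, this fiber must be $F_3$ itself. An induction on $j$, using that the chain $R_{1,1}, R_{1,2}, \ldots, R_{1,r_1}$ is connected with consecutive intersection $1$, then places every $R_{1,j}$ inside $F_3$.

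For the second step, I assume for contradiction $r_3 \geq 1$. By \autoref{lem: specialtycriterion} the curve $R_{3,1}$ lies in a half-fiber of $|2F_i|$ for some $i$, and $R_{3,1} \cdot F_3 = 1$ rules out $i = 3$. Applying \autoref{lem: Giacomo's.lemma} to the $2$-degenerate $4$-sequence $(F_1, F_1 + R_{1,1}, F_3, F_3 + R_{3,1})$ and using that $R_{1,1}$ sits in the half-fiber $F_3$ of $|2F_3|$, I conclude that $R_{3,1}$ cannot lie in a half-fiber of $|2F_1|$; hence $R_{3,1}$ lies in a half-fiber $F_2^{\star}$ of $|2F_2|$. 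Since $F_2^{\star} \equiv F_2$, the triple $(F_1, F_2^{\star}, F_3)$ is again a non-extendable $3$-sequence, and extending it to a $3$-degenerate $10$-sequence via \autoref{thm: 10sequence} gives a new configuration in which the old $R_{3,j}$-chain has been absorbed into $F_2^{\star}$, producing a strictly smaller value of $r_3$. Iterating this descent leads to $r_3 = 0$.

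For the third step, once $r_3 = 0$ the argument of the first step applies verbatim to the $R_{2,j}$-chain, provided $r_2 \geq 3$: one reruns the Riemann--Roch construction of $D$ with a class $e_{i,j}$ whose indices now lie in the $F_2$-block of the $10$-sequence to obtain $R_{2,3} \subset F_3$, and then propagates along the chain. The remaining short cases $r_2 \in \{1,2\}$ are handled by direct intersection arguments using \autoref{lem: specialtycriterion} and \autoref{lem: Giacomo's.lemma}, possibly followed by a final swap of $F_3$ with the other half-fiber of $|2F_3|$ in the classical case. The main obstacle is the finite descent in the second step: one has to verify that the substitution of $F_2$ by $F_2^{\star}$ really yields a new $3$-degenerate $10$-sequence with strictly smaller $r_3$, which requires carefully tracking how the various chain curves sit inside fibers of the three fibrations $|2F_1|, |2F_2^{\star}|, |2F_3|$ as $F_2$ is replaced.
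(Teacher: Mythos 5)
Your Step 1 is correct and matches what the paper does implicitly: once \(R_{1,1}\subset F_3\), the connectedness of the chain forces every \(R_{1,j}\) into \(F_3\). Your use of \autoref{lem: specialtycriterion} and \autoref{lem: Giacomo's.lemma} to conclude that \(R_{3,1}\) (if it exists) must lie in a half-fiber of \(|2F_2|\) is also correct. The proof breaks down at the descent. Replacing \(F_2\) by \(F_2^{\star}\equiv F_2\) changes nothing: a \(3\)-degenerate \(10\)-sequence is a numerical structure, so the original \(10\)-sequence is still a perfectly valid extension of \((F_1,F_2^{\star},F_3)\), and \autoref{thm: 10sequence} gives you no control whatsoever over which extension you get. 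There is no decreasing invariant, so "iterating this descent" proves nothing — as you yourself half-admit in your closing paragraph. What is actually needed is a genuine contradiction from \(r_3>0\), and the only available tool is \autoref{lem: Giacomo's.lemma} applied to \((F_2,F_2+R,F_3,F_3+R_{3,1})\) for a special bisection \(R\) of \(|2F_2|\) that is a \emph{component of \(F_3\)} and \emph{disjoint from \(R_{3,1}\)}. Producing such an \(R\) is precisely what the paper's decompositions \(D\sim D'+C\) and \(E\sim E'+C'\) with \(C+C'=F_3\) are for: in the case \(D'\equiv F_2\) one finds \(R\) inside \(C\subset F_3\) (using \(C.F_2=1\) and \(C.R_{3,1}=0\)), while in the case \(D'\equiv F_1\) one finds \(R\) inside \(C'\) and builds a fresh degenerate sequence \((F_1,F_1+R_{1,1},F_2,F_2+R,F_3)\) from scratch. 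You never invoke \(C\), \(C'\), or the dichotomy \(D'\equiv F_1\) versus \(D'\equiv F_2\) at this point, and without them the step fails.

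Step 3 has the same problem in a different guise. The difficulty is not propagating along the \(R_{2,j}\)-chain (that is automatic once \(R_{2,1}\subset F_3\)); it is showing that \(R_{2,1}\) lies in \(F_3\) itself rather than in the other half-fiber \(F_3'\) of \(|2F_3|\) or in a half-fiber of \(|2F_1|\). \autoref{lem: specialtycriterion} only gives you one of these three options, and \autoref{lem: Giacomo's.lemma} applied with \(R_{1,1}\subset F_3\) does not exclude the other two (it would need \(R_{1,1}\) to sit in a half-fiber of \(|2F_2|\), which it does not). Rerunning the Riemann--Roch construction with an \(e_{i,j}\) in the \(F_2\)-block yields \(R_{2,3}\subset C\) but leaves open which half-fiber \(C+C'\) equals, so it does not pin \(R_{2,1}\) to \(F_3\) either; and a "final swap" of \(F_3\) with \(F_3'\) would dislodge the \(R_{1,j}\), which you have already placed in \(F_3\). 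The paper circumvents all of this by not trying to control an arbitrary extension at all: it constructs the special bisection \(R\subset F_3\) of \(|2F_2|\) explicitly and declares the new starting sequence \((F_1,F_1+R_{1,1},F_2,F_2+R,F_3)\), after which any extension works by \autoref{lem: Giacomo's.lemma}. In short, the skeleton of your argument is right where it overlaps with the paper, but the two load-bearing steps — killing \(r_3\) and anchoring the \(F_2\)-chain in \(F_3\) — are missing, and both require the \(C\), \(C'\) analysis you have set aside.
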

\begin{proof}
If \(r_2 = r_3 = 0\), the desired property is already satisfied.
Hence, assume that \(r_2>0\) or \(r_3>0\).

Suppose first \(D'\equiv F_1\) and \(E'\equiv F_2\). 
Since \(C'.F_2 = 1\), \(C'\) contains a special bisection \(R\) of \(|2F_2|\). 
Now, notice that \(R_{1,1}\) is not a component of \(C'\), since it is simple in \(F_3\) and already contained in \(C\). 
Moreover, \(C'.R_{1,1}=(E-F_2).R_{1,1} = 0\), so \(R_{1,1}\) is disjoint from \(C'\). 
In particular, \(R_{1,1}.R=0\), and any extension of \((F_1,F_1+R_{1,1},F_2,F_2+R,F_3)\) satisfies the desired property by \autoref{lem: Giacomo's.lemma}.

Suppose instead that \(D'\equiv F_2\) and \(E'\equiv F_1\). If \(r_2>0\), then \(C.R_{2,1}=-1\), so \(R_{2,1}\) is contained in \(F_3\). 
In particular, \(r_3=0\) by \autoref{lem: Giacomo's.lemma} and we are done.
Hence, we may assume \(r_3>0\), and that \(R_{3,1}\) is in \(F_2\) by \autoref{lem: Giacomo's.lemma}. 
Since \(C'.R_{1,1}=(E-F_1).R_{1,1}=-1\), \(C'\) contains \(R_{1,1}\). 
On the other hand, \(C.F_2 = (D-F_2).F_2 =1\), so \(C\) contains a special bisection \(R\) of \(|2F_2|\). 
However, \(C.R_{3,1}=0\) and \(C\) does not contain \(R_{3,1}\), so \(R_{3,1}\) is disjoint from \(C\). 
In particular, \(R.R_{3,1}=0\). Then, \((F_2,F_2+R,F_3,F_3+R_{3,1})\) is a \(2\)-degenerate \(4\)-sequence, whose existence contradicts \autoref{lem: Giacomo's.lemma}.
\end{proof}

We replace our original \(3\)-degenerate \(10\)-sequence by one satisfying the properties of \autoref{claim: tentative1}. 
In particular, \(r_1+r_2 = 7\), and we can suppose that \(r_1 \geq 4\).
Denote by \(G_1\in |2F_1|\) and \(G_2 \in |2F_2|\) the fibers containing \(R_{1,3}\). 
By~\autoref{lem: F1.F2.no.common.components}, both \(G_1\) and \(G_2\) are simple fibers.

\begin{claim}
\(F_3\) is a half-fiber of type~\(\I_4^*\).
\end{claim}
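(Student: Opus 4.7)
The plan is to combine a simple-component observation with a bound on the number of components of $F_3$, and then classify the possible Kodaira types combinatorially. By \autoref{claim: tentative1}, all the $R_{i,j}$ with $i \in \{1,2\}$ lie in $F_3$ as two disjoint, non-adjacent paths of lengths $r_1 \ge 4$ and $r_2$, with $r_1 + r_2 = 7$. Writing $F_3 = a\, R_{1,1} + \sum_\ell a_\ell C_\ell$ and intersecting with $F_1$ yields $1 = a + \sum_\ell a_\ell\, (F_1 \cdot C_\ell)$, and since every term is non-negative, $R_{1,1}$ must appear with multiplicity $1$ in $F_3$. The analogous argument with $F_2$ shows that $R_{2,1}$ is a simple component of $F_3$ whenever $r_2 \ge 1$. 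Moreover, applying \autoref{lem: combinatorial.0}(1) to the fibration $|2F_3|$ bounds the number of irreducible components of $F_3$ by $9$.

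I would then rule out every Kodaira type for $F_3$ other than $\I_4^*$ by inspecting the possible embeddings of the two chains in the dual graph of $F_3$, subject to the simple-component constraints at their heads and the bound on the number of components. The small types $\II, \III, \IV$ cannot contain a chain of length $\ge 4$. The exceptional types $\II^*, \III^*, \IV^*$ admit too few simple components (only one in $\II^*$) or cannot accommodate two disjoint, non-adjacent chains starting at simple vertices of the required lengths. For the cyclic types $\I_n$ with $r_2 \ge 1$, the component of $F_3$ lying strictly between the two chains along the cycle is adjacent to both $R_{1,r_1}$ and $R_{2,1}$ and is vertical for $|2F_1|$ (since $F_1 \cdot F_3 = 1$ is entirely absorbed by the coefficient of $R_{1,1}$); this forces it to lie simultaneously in the distinct fibers of $|2F_1|$ containing $R_{1,r_1}$ and $R_{2,1}$, an impossibility. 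For $\I_n^*$, the four branches of $\tilde D_{n+4}$ are the only simple components, and combining the non-adjacency condition for the two chains with the bound $n + 5 \le 9$ leaves only $(r_1, r_2) = (7, 0)$ and $n = 4$: the chain runs from one branch through the full length-$5$ central chain to the opposite branch, while the two unused branches account for the remaining simple components of $F_3$. Hence $F_3$ is of type $\I_4^*$.

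The main obstacle will be the case $r_2 = 0$ for the cyclic and exceptional types, where the separating-component argument is unavailable. In this situation I expect to use the simple fiber $G_2 \in |2F_2|$, which by construction contains the entire chain $R_{1,1}, \ldots, R_{1,r_1}$: the fiber relations at successive components of this chain determine the multiplicities in $G_2$ inductively from the endpoint $R_{1,1}$, and matching these forced multiplicities against Kodaira's classification of fiber types — in particular the bound of $6$ on the multiplicity of any component — rules out all remaining configurations and forces $F_3$ to be of type $\I_4^*$.
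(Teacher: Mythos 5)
Your opening steps are sound and match the paper's: the two chains sit inside $F_3$ as disjoint, mutually orthogonal configurations $A_{r_1}$, $A_{r_2}$ with $r_1+r_2=7$, the heads $R_{1,1}$ and $R_{2,1}$ are simple in $F_3$, and the component count is at most $9$; this narrows the type of $F_3$ to $\I_8$, $\I_9$, $\II^*$, $\III^*$ or $\I_4^*$, and $\II^*$ dies because it has only one simple component while $F_3$ needs distinct simple components meeting $F_1$ and $F_2$. But the two remaining exclusions, which are the substance of the claim, both have genuine gaps. First, $\III^*$ is \emph{not} ruled out by your combinatorics: for $(r_1,r_2)=(5,2)$ the chain $R_{1,1}\text{--}R_{1,2}\text{--}R_{1,3}\text{--}R_{1,4}\text{--}R_{1,5}$ can run from a simple end of $\tilde E_7$ through the trivalent vertex onto the short branch, while $R_{2,1}\text{--}R_{2,2}$ occupies the other end, the two being separated by a single vertex $R$; this embedding satisfies every constraint you impose (disjointness, non-adjacency, simple heads, $\le 9$ components). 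This is exactly the configuration the paper spends the bulk of its proof excluding, and it cannot be done combinatorially: the paper uses the effective divisor $C$ with $C.R_{1,3}=-1$ coming from the isotropic vector $e_{2,3}$ of \autoref{lem:Cossec's.lemma}, produces the effective divisor $Z$, and invokes \autoref{cor: divisibility} to force $R\le Z$, ultimately showing $C$ would contain both simple components of $F_3$. Your proposal contains no substitute for this lattice-theoretic input.

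Second, the exclusion of the multiplicative types $\I_8,\I_9$ also fails. For $r_2\ge 1$ your contradiction rests on $R_{1,r_1}$ and $R_{2,1}$ lying in \emph{distinct} fibers of $|2F_1|$, but they do not: by \autoref{prop: G2exists}(3) (or simply because $F_3-R_{1,1}$ is connected and entirely vertical for $|2F_1|$ when $F_3$ is a cycle) all components of $F_3$ other than $R_{1,1}$ lie in one and the same simple fiber of $|2F_1|$, so the separating component lies in that single fiber and no contradiction arises. For $r_2=0$ your proposed induction on multiplicities in $G_2$ does not close: the fiber relation $G_2.R_{1,j}=0$ involves neighbors of $R_{1,j}$ inside $G_2$ that need not be among the $R_{1,k}$, so the multiplicities are not determined from the endpoint. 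The paper's route is different and essential: it shows directly that $R_{1,3}$ occurs with multiplicity $\ge 2$ in $C$, hence in $F_3=C+C'$, so $F_3$ is additive and the types $\I_8,\I_9$ are excluded in one stroke. In short, the auxiliary divisors $D\equiv e_{2,3}$, $C$ and $C'$ — which your proposal never introduces — are not optional bookkeeping but carry the two decisive steps of the argument.
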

\begin{proof}
Since \(F_3\) contains two disjoint configurations of type~\(A_{r_{1}}\) and \(A_{r_2}\) with \(r_1 + r_2 = 7\), we deduce that \(F_3\) is of type~\(\I_8\), \(\I_9\), \(\I_4^*\), \(\III^*\) or \(\II^*\).

Since \(C.R_{1,3}=-1\), the curve \(R_{1,3}\) is a component of \(C\). Repeating the argument with the other \(R_{i,j}\), we have that \(C-R_{1,2}-\ldots-R_{1,r_1}\) is effective. However, this divisor intersects \(R_{1,3}\) negatively, so reasoning similarly we obtain that the divisor
\[
    Z \coloneqq C-R_{1,2}-2R_{1,3}-\ldots-2R_{1,r_1-1}-R_{1,r_1}
\]
is effective. 
In particular, \(R_{1,3}\) has multiplicity \(\ge 2\) in \(C\), and, therefore, \(R_{1,3}\) has multiplicity \(\ge 2\) in \(F_3\). So, \(F_3\) is additive. Also, since \(C\) and \(C'\) both intersect one of the half-fibers with multiplicity \(1\) and \(C + C' = F_3\), each of the curves \(C\) and \(C'\) contains a simple component of~\(F_3\), so \(F_3\) cannot be of type~\(\II^*\). It remains to exclude the case that \(F_3\) is of type~\(\III^*\).

Seeking a contradiction, assume that \(F_3\) is of type~\(\III^*\). Denote by \(R\) the only component of \(F_3\) different from the \(R_{i,j}\). The only possible way of fitting the \(R_{i,j}\) in \(F_3\) is if \(r_1=7\) or \(r_1 =5\). The former is impossible, since otherwise \(G_2\) would contain both simple components of \(F_3\), contradicting \autoref{prop: G2exists}. Hence, we may assume that \(r_1 = 5\), \(r_2 = 2\), and that the divisor \(Z = C-R_{1,2}-2R_{1,3}-2R_{1,4}-R_{1,5}\) is effective. We have \(R.R_{1,4} = R.R_{2,2} = 1\) and by writing \(R\) as linear combination of \(F_3\) and the \(R_{i,j}\), we obtain \(D.R =1\).

\[
\begin{tikzpicture}[yscale=0.8]
    \node (R1) at (-3, 0) [nodal, label=below:\(R_{1,1}\)] {};
    \node (R2) at ( 3, 0) [nodal, label=below:\(R_{2,1}\)] {};
    \node (R3) at (-2, 0) [nodal, label=below:\(R_{1,2}\)] {};
    \node (R4) at ( 2, 0) [nodal, label=below:\(R_{2,2}\)] {};
    \node (R5) at (-1, 0) [nodal, label=below:\(R_{1,3}\)] {};
    \node (R6) at ( 1, 0) [nodal, label=below:\(R\)] {};
    \node (R7) at ( 0, 0) [nodal, label=below:\(R_{1,4}\)] {};
    \node (R8) at (0, 1) [nodal, label=left:\(R_{1,5}\)] {};
    \draw (R2)--(R4)--(R6);
    \draw (R1)--(R3)--(R5)--(R7)--(R6) (R7)--(R8);
\end{tikzpicture}
\]

Since \(e_{2,3}\) is not contained in the lattice spanned by the \(3\)-degenerate \(10\)-sequence by \autoref{cor: divisibility}, neither is \(Z\), so \(Z\) must contain \(R\).
Thus, we can recursively remove the following curves from \(Z\) while staying effective: \(R,R_{2,2},R_{2,1},R_{2,2},R,R_{1,4},R_{1,3},R_{1,2},\) and \(R_{1,1}\). But then, \(C\) contains both simple components of \(F_3\), which is absurd.
\end{proof}

By \autoref{prop: G2exists}, both \(G_1\) and \(G_2\) contain a configuration of type~\(D_8\), so they are of type~\(\II^*\) or \(\I_4^*\). In the former case, the component of \(G_i\) not contained in \(F_3\) would be a special bisection of \(|2F_3|\) and \(X\) would be of type~\(\tilde{D}_8\), which is absurd. Hence, both \(G_1\) and \(G_2\) are of type~\(\I_4^*\). If the extra components \(R_i\) of \(G_i\) lie on the same side of \(F_3\), then we get the following graph: 
\[
\begin{tikzpicture}[scale=0.8]
    \node (F1) at (-2,-1) [nodal, label=below:\(R_{1}\)] {};
    \node (F2) at (-3,-1) [nodal, label=below:\(R_{2}\)] {};
    \node (R1) at (-3, 0) [nodal, label=above:\(R_{1,1}\)] {};
    \node (R2) at ( 3, 0) [nodal] {};
    \node (R3) at (-2, 0) [nodal] {};
    \node (R4) at ( 2, 0) [nodal] {};
    \node (R5) at (-1, 0) [nodal] {};
    \node (R6) at ( 1, 0) [nodal] {};
    \node (R7) at ( 0, 0) [nodal] {};
    \node (R9) at (-2, 1) [nodal] {};
    \node (R10) at (2, 1) [nodal] {};
       \draw (F1)--(R3) (F2)--(R3);
    \draw (R2)--(R4)--(R6) (R4)--(R10);
    \draw (R1)--(R3)--(R5)--(R7)--(R6) (R3)--(R9);
\end{tikzpicture}
\]
There is a half-fiber of type~\(\I_0^*\) and a configuration of type~\(D_5\) disjoint from this half-fiber. This contradicts \autoref{lem: combinatorial.0}.
Therefore, \(R_1\) and \(R_2\) lie on opposite sides of \(F_3\) and we get the dual graph of type~\(2\tilde{D}_4\). This finishes the proof.
\end{proof}

\section{Projective models of Enriques surfaces} \label{sec: projective.models}

In this section, we describe an application of \autoref{cor:non-degeneracy.4} to the theory of projective models of Enriques surfaces. In particular, we show that every Enriques surface in characteristic different from \(2\) arises via Enriques' original construction. For this, we note first that every Enriques surface of non-degeneracy \(4\) admits a non-special \(3\)-sequence.

\begin{proposition} \label{prop: atmostoneisspecial}
If \((F_1,F_2,F_3,F_4)\) is a \(4\)-sequence, then at most one of the \(3\)-sequences \((F_i,F_j,F_k)\) with \(\{i,j,k\} \subseteq \{1,2,3,4\}\) is special.
\end{proposition}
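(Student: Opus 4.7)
Suppose, for contradiction, that two of the four \(3\)-subsequences of \((F_1, F_2, F_3, F_4)\) are special. Since every pair of \(3\)-subsequences of a \(4\)-sequence shares exactly two half-fibers, after relabeling the indices we may assume that the two special subsequences are \((F_1, F_2, F_3)\) and \((F_1, F_2, F_4)\). By \autoref{def: special.seq}, there exist effective divisors \(S \equiv F_1 + F_2 - F_3\) and \(\tilde{S} \equiv F_1 + F_2 - F_4\).

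The first key step is the bilinear computation
\[
    S \cdot \tilde{S} = (F_1 + F_2 - F_3) \cdot (F_1 + F_2 - F_4) = -1,
\]
which uses only \(F_i \cdot F_j = 1 - \delta_{ij}\). Since two effective sums of \((-2)\)-curves with disjoint supports have non-negative intersection, \(S\) and \(\tilde{S}\) must share at least one component. By \autoref{prop: propertiesofcharacteristiccycle}, each of \(S\) and \(\tilde{S}\) lies in a simple fiber of \(|2F_1|\) and in a simple fiber of \(|2F_2|\); since fibers of a genus one fibration are pairwise disjoint, the shared component forces \(S\) and \(\tilde{S}\) into a single common simple fiber \(G_1 \in |2F_1|\) and a single common simple fiber \(G_2 \in |2F_2|\). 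In particular, \(G_1\) admits the two distinct decompositions \(G_1 = S + S' = \tilde{S} + \tilde{S}'\) as sums of fundamental cycles of sub-Dynkin diagrams of its dual graph, with cross-intersection \(S \cdot \tilde{S} = -1\).

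The final step—the main obstacle—is to rule out this combinatorial configuration. My plan would be to go through, case by case, the fiber types of \(G_1\) listed in \autoref{tab:S1S2} that can accommodate two such decompositions, using \autoref{lem: combinatorial.0} to restrict the possible simultaneous types of \(G_2\). For each admissible pair \((G_1, G_2)\), the goal would be to exhibit a \((-2)\)-curve \(R \subseteq G_1\) not contained in \(G_2\) for which the identity
\[
    F_3 \cdot R = F_1 \cdot R + F_2 \cdot R - S \cdot R
\]
produces a negative integer, contradicting the nefness of the half-fiber \(F_3\). The delicate point is carrying out this analysis uniformly: the highest-root multiplicities from \autoref{prop: propertiesofcharacteristiccycle}(4) govern the overlap of \(S\) and \(\tilde{S}\) inside \(G_1\), while the constraint \(R \not\subseteq G_2\) (combined with fiber-type bounds on \(G_2\)) must be shown to force \(F_2 \cdot R\) to be small enough that the resulting value of \(F_3 \cdot R\) is forbidden in every case.
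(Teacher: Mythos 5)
Your first two steps are exactly right and coincide with the paper's: the bilinear computation \(S\cdot\tilde S=(F_1+F_2-F_3)\cdot(F_1+F_2-F_4)=-1\) and the conclusion that the two effective divisors must share a component. But your proof stops there: the entire final step is announced as a ``plan'' for a case-by-case analysis over fiber types, and it is not carried out. That is a genuine gap, and not a small one --- it is far from clear that your strategy closes. You would need, for each admissible configuration, a component \(R\subseteq G_1\) with \(R\not\subseteq G_2\) and \(F_2\cdot R - S\cdot R<0\); since \(R\) would not be a component of \(S\) (which lies in \(G_2\)), you would need \(S\cdot R>F_2\cdot R\geq 0\), and nothing in the highest-root combinatorics guarantees such an \(R\) exists in every case. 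Moreover, a Dynkin-diagram analysis of \(G_1\) alone cannot suffice in principle: the configuration ``two fundamental cycles inside one fiber with cross-intersection \(-1\)'' is not combinatorially impossible, so any successful argument must re-use the global hypothesis somewhere.

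The missing idea is much simpler and makes the detour through the common fibers \(G_1,G_2\) unnecessary: the shared component is \emph{already} the contradiction. Since \(F_3+S\) and \(F_4+\tilde S\) both belong to the pencil \(|F_1+F_2|\) (after arranging, via \autoref{prop: propertiesof3sequences} and replacing a half-fiber by its \(K_X\)-twist if necessary, that \(S\) and \(\tilde S\) are honest members of \(|F_1+F_2-F_3|\) and \(|F_1+F_2-F_4|\) rather than merely numerically effective), and since these two divisors are distinct (\(F_3\neq F_4\), and neither is a component of \(S\) or \(\tilde S\)), they are two distinct members of a pencil that has no fixed components by \cite[Proposition~2.6.1]{CossecDolgachevLiedtke}. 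Two distinct members of a base-component-free pencil cannot share a component, contradicting \(S\cdot\tilde S=-1\). You should also be slightly more careful at the outset: \autoref{def: special.seq} only gives numerical effectivity, so the reduction to actual linear systems (needed for the pencil argument) deserves a sentence.
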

\begin{proof}
Seeking a contradiction, assume that two of the \(3\)-sequences are special. By \autoref{prop: propertiesof3sequences}, we may assume without loss of generality that there exist \(S_3 \in |F_1 + F_2 - F_3|\) and \(S_4 \in |F_1 + F_2 - F_4|\). Then,
\[
S_3.S_4 = (F_1 + F_2 - F_3).(F_1 + F_2 - F_4) = -1.
\]
Hence, \(S_3\) and \(S_4\) share a component. This is absurd, because \(F_3 + S_3, F_4 + S_4 \in |F_1 + F_2|\), \(F_3 + S_3 \neq F_4 + S_4\), and \(|F_1 + F_2|\) has no base components by \cite[Proposition~2.6.1]{CossecDolgachevLiedtke}.
\end{proof}

\begin{corollary} \label{cor: whendoesthereexistanon-special3sequence?}
For an Enriques surface \(X\), the following are equivalent:
\begin{enumerate}
    \item \(X\) is not of type~\(\tilde{E}_8,\tilde{D}_8,\tilde{E}_7,\) or \(\tilde{E}_7^{(2)}\).
    \item \(X\) admits a non-special \(3\)-sequence.
\end{enumerate}
\end{corollary}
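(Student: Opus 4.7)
The plan is to prove both implications by combining results already established in the paper. The equivalence is essentially a bookkeeping exercise once one has \autoref{cor:non-degeneracy.4} and \autoref{prop: atmostoneisspecial} in hand; the only subtlety lies in the $\tilde{E}_7^{(2)}$ case.

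For the implication (2) $\Rightarrow$ (1), suppose $X$ admits a non-special $3$-sequence. In particular $X$ admits a $3$-sequence, so $\max\nd(X)\geq 3$, which by \autoref{thm: non-degeneracy.1} and \autoref{thm: non-degeneracy.2} rules out types $\tilde{E}_8$, $\tilde{D}_8$, and $\tilde{E}_7$. To exclude type $\tilde{E}_7^{(2)}$, I would invoke \autoref{rem: nd.E7^(2)}: such a surface admits only three genus one fibrations, and their half-fibers form the unique $3$-sequence on $X$ (up to reordering). \autoref{example:E7(2)} checks that this $3$-sequence is special. Hence $X$ cannot be of type $\tilde{E}_7^{(2)}$.

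For the implication (1) $\Rightarrow$ (2), the hypothesis on $X$ together with \autoref{thm: non-degeneracy.1} and \autoref{thm: non-degeneracy.2} shows that $X$ is not extra-special, so $\max\nd(X)\geq 3$. Since $X$ is also not of type $\tilde{E}_7^{(2)}$, \autoref{cor:non-degeneracy.4} upgrades this to $\max\nd(X)\geq 4$. Pick a $4$-sequence $(F_1,F_2,F_3,F_4)$. By \autoref{prop: atmostoneisspecial}, at most one of the four sub-$3$-sequences $(F_i,F_j,F_k)$ with $\{i,j,k\}\subset\{1,2,3,4\}$ is special, so at least three of them are non-special, which produces the desired non-special $3$-sequence.

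The main point to double-check is the $\tilde{E}_7^{(2)}$ part of direction (2) $\Rightarrow$ (1), since this is the only place where the proof uses more than just the non-degeneracy invariant: one has to know that every $3$-sequence on $X$ (not merely some distinguished one) is special, and this is guaranteed because the three genus one fibrations are uniquely determined. No real computation is needed; the argument is a direct assembly of earlier results.
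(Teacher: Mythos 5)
Your proposal is correct and follows essentially the same route as the paper: both directions are assembled from \autoref{thm: non-degeneracy.1}, \autoref{thm: non-degeneracy.2}, \autoref{cor:non-degeneracy.4}, \autoref{prop: atmostoneisspecial}, and the uniqueness (up to order) of the special \(3\)-sequence on a surface of type \(\tilde{E}_7^{(2)}\) from \autoref{example:E7(2)}. Your extra remark that one must check that \emph{every} \(3\)-sequence on a type-\(\tilde{E}_7^{(2)}\) surface is special is exactly the point the paper handles by citing the uniqueness of the three genus one fibrations, so nothing is missing.
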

\begin{proof}
If \(X\) admits a non-special \(3\)-sequence, then \(X\) is not of type~\(\tilde{E}_8,\tilde{D}_8,\tilde{E}_7,\) or \(\tilde{E}_7^{(2)}\), since the first three types of surfaces satisfy \(\max \nd(X) \leq 2\) and the unique \(3\)-sequence on the latter type of surface is special, as we have seen in \autoref{example:E7(2)}.

Conversely, if \(X\) is not of one of the excluded types, then \autoref{thm: non-degeneracy.1}, \autoref{thm: non-degeneracy.2}, and \autoref{cor:non-degeneracy.4} show that \(X\) admits a \(4\)-sequence. By \autoref{prop: atmostoneisspecial}, this implies that \(X\) admits a non-special \(3\)-sequence.
\end{proof}

By \cite[Theorem~3.5.1]{CossecDolgachevLiedtke}, a non-special \(3\)-sequence \((F_1,F_2,F_3)\) on a classical Enriques surface \(X\) induces, via the linear system \(|F_1 + F_2 + F_3|\), a realization of~\(X\) as the minimal desingularization of an Enriques sextic~\eqref{eq: Enriques.sextic}.
Thus, the following is an immediate consequence of \autoref{cor: whendoesthereexistanon-special3sequence?}.

\begin{theorem} \label{thm: sexticmodel}
Any classical Enriques surface which is not of type~\(\tilde{E}_8,\tilde{D}_8,\tilde{E}_7\) or~\(\tilde{E}_7^{(2)}\) is the minimal resolution of an Enriques sextic \eqref{eq: Enriques.sextic}.
In particular, if \(p \neq 2\), then \emph{every} Enriques surface arises via this construction.
\end{theorem}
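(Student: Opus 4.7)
The strategy is to combine two ingredients: the existence of a non-special \(3\)-sequence supplied by \autoref{cor: whendoesthereexistanon-special3sequence?}, and the classical construction of the Enriques sextic model from such a sequence.

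For the first assertion, suppose \(X\) is a classical Enriques surface not of type \(\tilde{E}_8\), \(\tilde{D}_8\), \(\tilde{E}_7\), or \(\tilde{E}_7^{(2)}\). Then \autoref{cor: whendoesthereexistanon-special3sequence?} produces a non-special \(3\)-sequence \((F_1,F_2,F_3)\) on \(X\). Feeding this sequence into \cite[Theorem~3.5.1]{CossecDolgachevLiedtke} yields that \(|F_1+F_2+F_3|\) is a base-point-free \(4\)-dimensional linear system whose associated morphism \(\varphi\colon X\to\mathbb{P}^3\) is birational onto a sextic surface \(S\) with non-normal locus equal to the tetrahedron of coordinate lines. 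After a suitable choice of coordinates, \(S\) has an equation of the form \eqref{eq: Enriques.sextic}, and \(\varphi\) factors through the minimal desingularization \(\widetilde{S}\to S\), thereby identifying \(X\) with \(\widetilde{S}\).

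For the ``in particular'' clause, I would recall from the introduction that each of the types \(\tilde{E}_8\), \(\tilde{D}_8\), \(\tilde{E}_7\), and \(\tilde{E}_7^{(2)}\) occurs only in characteristic \(2\), and that every Enriques surface in characteristic \(p\neq 2\) is automatically classical. Consequently, for \(p\neq 2\), every Enriques surface satisfies the hypotheses of the first assertion, and hence admits an Enriques sextic model; passing to the birational transformation described in the introduction then gives a Castelnuovo quintic \eqref{eq: Castelnuovo.quintic}.

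The substantive content of this proof is not in the proof itself but in \autoref{cor: whendoesthereexistanon-special3sequence?}, whose derivation rests on \autoref{cor:non-degeneracy.4} (the classification of surfaces with \(\max\nd(X)=3\)) and on \autoref{prop: atmostoneisspecial} (which rules out all four \(3\)-subsequences of a \(4\)-sequence being simultaneously special). Once the non-special \(3\)-sequence is in hand, the passage to the Enriques sextic model is a direct invocation of Cossec's construction, so the only potential obstacle is verifying that the chosen sequence satisfies the precise hypotheses of \cite[Theorem~3.5.1]{CossecDolgachevLiedtke}; this, however, is exactly the content of ``non-special'' in \autoref{def: special.seq}, so no further geometric difficulty arises.
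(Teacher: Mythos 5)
Your proof is correct and follows exactly the paper's route: the statement is derived as an immediate consequence of \autoref{cor: whendoesthereexistanon-special3sequence?} together with \cite[Theorem~3.5.1]{CossecDolgachevLiedtke}, with the ``in particular'' clause following because the excluded types exist only in characteristic \(2\) and every Enriques surface in characteristic \(p\neq 2\) is classical. No discrepancies with the paper's argument.
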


Following an observation of Castelnuovo, we note that the Cremona transformation
\[
    [x_0:x_1:x_2:x_3] \mapsto [x_2x_3:x_0x_1:x_0x_2:x_0x_3]
\]
maps an Enriques sextic~\eqref{eq: Enriques.sextic} into a surface of the form
\begin{equation*}
    x_0^4x_1^2x_2^4x_3^2 + x_0^4x_1^2x_2^2x_3^4 + x_0^4x_2^4x_3^4 + x_0^6x_1^2x_2^2x_3^2 + x_0^3x_1x_2^2x_3^2Q'.
\end{equation*} 
Upon dividing by \(x_0^3x_2^2x_3^2\), we obtain the Castelnuovo quintic~\eqref{eq: Castelnuovo.quintic},
which has elliptic singularities at the four vertices of the coordinate tetrahedron (see \cite[Example~1.6.4]{CossecDolgachevLiedtke}).
In terms of linear systems, and after suitably permuting the coordinates, the above quintic model corresponds to the linear subsystem of \(|2F_1 + F_2 + F_3|\) spanned by the four divisors \(2F_1' + F_2' + F_3'\), \(2F_1 + F_2' + F_3'\), \(F_1 + F_1' + F_2 + F_3'\), and \(F_1 + F_1' + F_2' + F_3\), where \(F_i'\) denotes the second half-fiber of~\(|2F_i|\).

\begin{theorem} \label{thm: quinticmodel}
Any classical Enriques surface which is not of type~\(\tilde{E}_8,\tilde{D}_8,\tilde{E}_7\) or~\(\tilde{E}_7^{(2)}\) is birational to a normal Castelnuovo quintic~\eqref{eq: Castelnuovo.quintic}.
In particular, if \(p \neq 2\), then \emph{every} Enriques surface arises via this construction.
\end{theorem}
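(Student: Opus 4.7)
The plan is to derive this theorem directly from Theorem \ref{thm: sexticmodel} together with the explicit Cremona transformation described in the paragraph preceding the statement. First I would apply Theorem \ref{thm: sexticmodel} to realize $X$ as the minimal desingularization of an Enriques sextic $S \subset \mathbb{P}^3$ of the form \eqref{eq: Enriques.sextic}. Then I would compose this desingularization with the Cremona transformation
\[
    [x_0:x_1:x_2:x_3] \mapsto [x_2x_3:x_0x_1:x_0x_2:x_0x_3]
\]
of $\mathbb{P}^3$. The substitution computed just before the statement shows that, after dividing by the common factor $x_0^3 x_2^2 x_3^2$, the image of $S$ under this transformation is cut out by the Castelnuovo quintic equation \eqref{eq: Castelnuovo.quintic}. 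Since both the desingularization $X \to S$ and the Cremona transformation are birational, the composition realizes $X$ as birational to a Castelnuovo quintic $Q \subset \mathbb{P}^3$.

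The main remaining step, and the expected obstacle, is to verify that $Q$ is normal. Following \cite[Example~1.6.4]{CossecDolgachevLiedtke}, the singular locus of $Q$ consists of four isolated elliptic singularities located at the vertices of the coordinate tetrahedron. Since $Q$ is a hypersurface in $\mathbb{P}^3$ (hence Cohen--Macaulay) with zero-dimensional singular locus, Serre's criterion yields normality. As an intrinsic description, one can interpret the resulting birational map from $X$ to $Q$ as the one induced by the linear subsystem of $|2F_1 + F_2 + F_3|$ spanned by the four divisors $2F_1' + F_2' + F_3'$, $2F_1 + F_2' + F_3'$, $F_1 + F_1' + F_2 + F_3'$, and $F_1 + F_1' + F_2' + F_3$, where $F_i'$ denotes the second half-fiber of $|2F_i|$.

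The final assertion of the theorem follows immediately, since every Enriques surface in characteristic $p \neq 2$ is classical, and none of the excluded types $\tilde{E}_8$, $\tilde{D}_8$, $\tilde{E}_7$, or $\tilde{E}_7^{(2)}$ occurs in characteristic different from $2$.
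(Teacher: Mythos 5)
Your proposal is correct and follows essentially the same route as the paper: the paper's own justification is precisely the paragraph preceding the statement, which applies \autoref{thm: sexticmodel} and then the Castelnuovo--Cremona transformation, with normality handled by the reference to \cite[Example~1.6.4]{CossecDolgachevLiedtke} describing the four elliptic singularities at the coordinate vertices. Your added remark that normality follows from Serre's criterion (hypersurface plus isolated singularities) just makes explicit what the paper leaves to that citation.
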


\begin{remark}
Quintic models of Enriques surfaces in characteristic \(p \neq 2\) were previously studied by Castelnuovo~\cite{Castelnuovo}, Kim~\cite{Kim}, Stagnaro~\cite{Stagnaro} and Umezu~\cite{Umezu}. Applying \cite[Corollary 2.3]{Umezu} together with our \autoref{cor:non-degeneracy.4} yields \autoref{thm: quinticmodel} in these characteristics, since Stagnaro's first construction of quintic Enriques surfaces \cite{Stagnaro} coincides with the one obtained via Castelnuovo's transformation. 
\end{remark}

\begin{remark}
It was known already to Enriques \cite{Enriques:bigenere} that every complex Enriques surface is birational to a double cover of~\(\mathbb{P}^2\) branched over (a degeneration of) an \emph{Enriques octic}, which is the union of two lines and a singular sextic, such that the sextic has two tacnodes whose tacnodal tangents coincide with the lines and a node at the intersection of the two lines. 
Such a double plane model can be obtained by choosing a \(2\)-sequence \((F_1,F_2)\) on~\(X\) and composing the map from \(X\) to a \(4\)-nodal quartic del Pezzo induced by the complete linear system \(|2F_1 + 2F_2|\) with the projection from a line through two of the nodes. 
The resulting rational map to \(\mathbb{P}^2\) corresponds to the linear system \(|2F_1 + F_2|\).

Our results also show that we can assume these Enriques octics to satisfy a certain non-degeneracy condition, in every characteristic different from \(2\). 
Choose a non-special \(3\)-sequence \((F_1,F_2,F_3)\) on~\(X\). The composition of the map from \(X\) to the Castelnuovo quintic with the projection from one of the two elliptic singularities of multiplicity \(3\) is given, after a suitable permutation of coordinates, by the linear system \(|2F_1 + F_2|\), so this double plane model coincides with the one described in the previous paragraph. In terms of equations, we can write the quintic as 
\[
    x_3^2 C_1 + x_0x_1x_3 Q'' + x_0x_1 C_2,
\]
where the \(C_i\) are equations of cubics passing through \([0:0:1]\), tangent to \(\{x_0 = 0\}\) at \([0:1:0]\), and tangent to \(\{x_1 = 0\}\) at \([1:0:0]\), and \(Q''\) is the equation of a conic passing through \([1:0:0]\) and \([0:1:0]\). The corresponding Enriques octic is given by \(x_0x_1(x_0x_1Q''^2 - 4C_1C_2)\). In particular, we get the existence of two cubics and a conic that are in special position with respect to the Enriques octic. The two cubics given by \(C_1\) and \(C_2\) are the images of \(F_3\) and \(F_3'\). Hence, our results show that every Enriques surface in characteristic different from \(2\) is birational to a double cover of \(\mathbb{P}^2\) branched over an Enriques octic that admits two cubics \(C_1\) and \(C_2\) as above.
\end{remark}

\bibliographystyle{amsplain}
\bibliography{Enriques}
\end{document}